\PassOptionsToPackage{reqno}{amsmath}
\documentclass[11pt]{amsart}
\numberwithin{equation}{section}

\allowdisplaybreaks[4]

\usepackage{amsmath}
\usepackage{mathrsfs}
\usepackage{amssymb,graphicx}
\usepackage{enumerate}
\usepackage{tikz}
\usepackage{comment}
\usepackage{bm}
\usepackage{mathtools}
\usepackage{microtype}
\usepackage[margin=1.15in]{geometry}
\usepackage{xcolor}
\usepackage[colorlinks=true, linkcolor=blue, citecolor=red, urlcolor=cyan]{hyperref}
\usepackage{fancyhdr}
\usepackage{amsmath}             
\usepackage{amssymb}           
\usepackage{amsfonts}           
\usepackage{latexsym}           
\usepackage{amsthm}              
\usepackage{tikz}
\usetikzlibrary{decorations.pathreplacing}
\usetikzlibrary{angles}
\usetikzlibrary{calc}
\usepackage{caption}
\usepackage{type1cm}
\usepackage{todonotes}
\usepackage{standalone}
\usepackage{graphicx}
\usepackage{tabularx}
\usepackage{array}

\reversemarginpar

\makeatletter
\newcommand{\labelsymbol}[2]{%
	\phantomsection
	\edef\@currentlabel{#2}
	\label{#1}
	#2
}
\makeatother

\newcolumntype{Y}{>{\centering\arraybackslash}X}

\makeatletter
\newcommand\bigDiamond{\mathop{\mathpalette\bigDi@mond\relax}}
\newcommand\bigDi@mond[2]{%
	\vcenter{\hbox{\m@th
			\scalebox{\ifx#1\displaystyle 2\else1.2\fi}{$#1\Diamond$}%
	}}%
}
\newcommand\bigLozenge{\mathop{\mathpalette\bigL@zenge\relax}}
\newcommand\bigL@zenge[2]{%
	\vcenter{\hbox{\m@th
			\scalebox{\ifx#1\displaystyle 2\else1.2\fi}{$#1\blacklozenge$}%
	}}%
}
\makeatother

\makeatletter

\newcommand{\leftsymbollabel}[2]{%
	\def\@currentlabel{$#2$}
	\makebox[0pt][r]{\ensuremath{#2}\, }
	\label{#1}%
}
\makeatother

\newcommand{\paddedmath}[1]{%
	\raisebox{0pt}%
	[\dimexpr\height+5pt\relax]%
	[\dimexpr\depth+5pt\relax]%
	{$\displaystyle #1$}%
}

\theoremstyle{plain}
\newtheorem{theorem}{Theorem}[section]
\newtheorem{lemma}[theorem]{Lemma}
\newtheorem{corollary}[theorem]{Corollary}
\newtheorem{proposition}[theorem]{Proposition}

\theoremstyle{definition}

\newtheorem{remark}[theorem]{Remark}
\newtheorem{example}[theorem]{Example}

\newcommand{\La}{\Lambda}

\newcommand{\asy}{\asymp}

\newcommand{\R}{\mathbb{R}}

\newcommand{\Q}{\mathbb{Q}}
\newcommand{\Z}{\mathbb{Z}}
\newcommand{\N}{\mathbb{N}}

\newcommand{\bi}{\mathbf{i}}
\newcommand{\bj}{\mathbf{j}}

\newcommand{\fola}{\mathcal{O}^+_{\Lambda}}

\newcommand{\eps}{\varepsilon}

\newcommand{\F}{\mathcal{F}}

\newcommand{\de}{\delta}
\newcommand{\f}{\frac}

\newcommand{\lam}{\lambda}
\newcommand{\od}{\overline{D}}
\newcommand{\ud}{\underline{D}}
\newcommand{\Nm}{N_{\text{max}}}
\newcommand{\yle}{\lesssim}
\newcommand{\yge}{\gtrsim}
\newcommand{\mI}{\mathcal{I}}
\newcommand{\mB}{\mathcal{B}}
\newcommand{\mbB}{\mathbf{B}}
\newcommand{\mE}{\mathcal{E}}

\newcommand{\bs}{\boldsymbol{\sigma}}
\newcommand{\col}{\Theta}
\newcommand{\mQ}{\mathscr{Q}}
\newcommand{\fo}{\mathcal{O}^+}

\DeclareMathOperator{\dimLM}{\dim_{\mathrm{LM}}}
\DeclareMathOperator{\dimUM}{\dim_{\mathrm{UM}}}
\DeclareMathOperator{\dimL}{\dim_{\mathrm{LH}}}

\DeclareMathOperator{\dimbe}{\dim_{\mathrm{Be}}}
\DeclareMathOperator{\dimH}{\dim_{\mathrm{H}}}
\DeclareMathOperator{\dimdH}{\dim_{\mathrm{DH}}}
\DeclareMathOperator{\dimdP}{\dim_{\mathrm{DP}}}
\DeclareMathOperator{\dimB}{\dim_{\mathrm{B}}}
\DeclareMathOperator{\dimP}{\dim_{\mathrm{P}}}
\DeclareMathOperator{\dimM}{\dim_{\mathrm{M}}}

\DeclareMathOperator{\dima}{dim_A}

\allowdisplaybreaks[4]

\title{Dimensions of Bedford-McMullen type sets in $\Z^2$}
\author{Jun Jie Miao}
\address{School of Mathematical Sciences,  Key Laboratory of MEA(Ministry of Education) \& Shanghai Key Laboratory of PMMP,  East China Normal University, Shanghai 200241, China}

\email{jjmiao@math.ecnu.edu.cn}

\author{Minghui Xu}
\address{School of Mathematical Sciences, East China Normal University, No. 500, Dongchuan Road, Shanghai 200241, P. R. China}
\email{xmhhh@stu.ecnu.edu.cn}

\begin{document}
	
	\begin{abstract}
		We study forward orbits in $\mathbb Z^2$ generated by the expanding
		affine maps $(x,y)\mapsto(mx+i,ny+j)$, where $m\ge n\ge2$ are
		integers and $(i,j)$ ranges over a nonempty digit set
		$\Lambda\subseteq\{0,\ldots,m-1\}\times\{0,\ldots,n-1\}$.
		We obtain explicit formulae for the mass, Beurling, discrete packing,
		and Assouad dimensions, the discrete Hausdorff dimension and its
		lower variant, and the lower entropy index.
		When $m=n$, all these dimensions equal $\log_m\#\Lambda$.
		When $m>n$, they can differ, and several depend on the starting
		point through the sizes of the endpoint columns of $\Lambda$.
		The two Hausdorff dimensions coincide and admit a pressure
		variational formula. Our proofs use finite symbolic models and
		approximate squares to relate digit counts to coverings in centered
		windows, translated windows, and annuli.
		We also characterize invariant lattice sets and determine their
		dimensions. Finally, we compare the orbit dimensions with semigroup
		growth and the dimensions of the compact dual attractor. In
		particular, the mass and Beurling dimensions need not equal the
		semigroup growth exponent, in contrast to the corresponding
		one-dimensional theory.
	\end{abstract}
	
	\maketitle
	\tableofcontents
	\section{Introduction}
	
	Bedford--McMullen carpets are a basic class of planar self-affine
	sets for which the effect of unequal contraction rates on dimension
	can be described explicitly. Given integers $m\ge n\ge2$ and a
	nonempty digit set
	$\La\subseteq\{0,\ldots,m-1\}\times\{0,\ldots,n-1\}$, the associated
	carpet is
	\[
	F_\La=
	\left\{\left(\sum_{\ell=1}^{\infty}\frac{i_\ell}{m^\ell},
	\sum_{\ell=1}^{\infty}\frac{j_\ell}{n^\ell}\right):
	(i_\ell,j_\ell)\in\La\text{ for every }\ell\ge1\right\}.
	\]
	The work of Bedford~\cite{Bedford1984} and
	McMullen~\cite{McMullen1984} established that the Hausdorff and
	box dimensions of these carpets can differ when the coordinate
	contraction ratios are unequal. Their results show how dimension
	depends on both the contraction ratios and the distribution of
	digits among the coordinate fibers. We recall the explicit
	classical formulae in Section~\ref{semigroup growth section}.
	
	Subsequent work has developed this connection between
	anisotropic geometry and dimension in several directions.
	Kenyon and Peres~\cite{KP1996} extended the theory to self-affine
	sponges and established the existence of invariant measures of
	full dimension for a class of expanding toral maps.
	Ferguson, Fraser, and Sahlsten~\cite{FFS2015} applied ergodic
	methods to projection and distance-set problems for self-affine
	carpets. Mackay~\cite{Mackay2011} determined the Assouad
	dimension of Bedford--McMullen carpets, while Banaji and
	Kolossv\'ary~\cite{BK2024} computed their intermediate dimensions and
	obtained obstructions to Lipschitz equivalence. These results
	make the carpets a natural setting for comparing notions of
	dimension that capture different aspects of scaling.
	
	In this paper we study a discrete expanding counterpart of this
	construction. For $(i,j)\in\La$, define
	\begin{equation}\label{map}
		S_{(i,j)}(x,y)=(mx+i,ny+j),\qquad (x,y)\in\Z^2.
	\end{equation}
	For $z=(u,v)\in\Z^2$ and $k\ge1$, let
	\[
	\La_k(z)=
	\left\{\left(m^ku+\sum_{\ell=0}^{k-1}i_\ell m^\ell,
	n^kv+\sum_{\ell=0}^{k-1}j_\ell n^\ell\right):
	(i_\ell,j_\ell)\in\La\text{ for }0\le\ell<k\right\}.
	\]
	The forward orbit of $z$ is
	$\fola(z)=\bigcup_{k\ge1}\La_k(z)$. Equivalently, it consists of
	all images of $z$ under nonempty finite compositions of the maps
	in~\eqref{map}. For
	$(\bi,\bj)=(i_0\ldots i_{k-1},j_0\ldots j_{k-1})\in\La^k$, we write
	\[
	S_{(\bi,\bj)}=S_{(i_0,j_0)}\circ\cdots\circ S_{(i_{k-1},j_{k-1})}.
	\]
	We call $\fola(0,0)$ the \emph{discrete Bedford--McMullen carpet}
	associated with $\La$. The inverse maps, acting on $\R^2$, have
	the compact attractor $-F_\La$. Thus the forward orbit and the
	classical carpet arise from the expanding and contracting versions
	of the same affine system.
	
	Expanding constructions of this kind belong to the theory of
	\emph{reverse iterated function systems}, introduced by
	Strichartz~\cite{Strichartz1998} in the study of fractals in the
	large. Every orbit considered here is a subset of $\Z^2$, and
	hence its ordinary Hausdorff dimension is zero. Its geometry is
	instead reflected in the growth of point counts and covering
	numbers as the observation window tends to infinity. We study
	several dimensions that describe different aspects of this
	large-scale geometry.
	
	{Before introducing these dimensions, we illustrate how the
		digit set and the starting point affect the orbit geometry through
		the six examples in Figure~\ref{fig:intro-orbits}.
		Take $m=4$, $n=3$, and
		\[
		\begin{aligned}
			\Lambda_1&=\{0,3\}\times\{0,2\},\\
			\Lambda_2&=\{(0,0),(1,1),(2,1),(3,2)\},\\
			\Lambda_3&=\bigl(\{0\}\times\{0,1,2\}\bigr)
			\cup\{(1,1)\}
			\cup\bigl(\{3\}\times\{0,2\}\bigr).
		\end{aligned}
		\]
		The first three panels compare a Cartesian product of digit sets,
		a pattern with one digit pair in each occupied column, and a pattern
		with unequal column sizes. Panels (c)--(f) use the same digit set
		$\Lambda_3$, with starting points $(0,0)$, $(-1,0)$, $(1,0)$, and
		$(0,1)$, respectively. They show how changing either coordinate of
		the starting point can affect the location and distribution of the orbit. 	The variations in these windows motivate a comparison of
		point growth and covering behavior at large scales. We begin with
		dimensions defined by point counts.
		
		\begin{figure}[htbp]
			\centering
			\includegraphics[width=\linewidth]{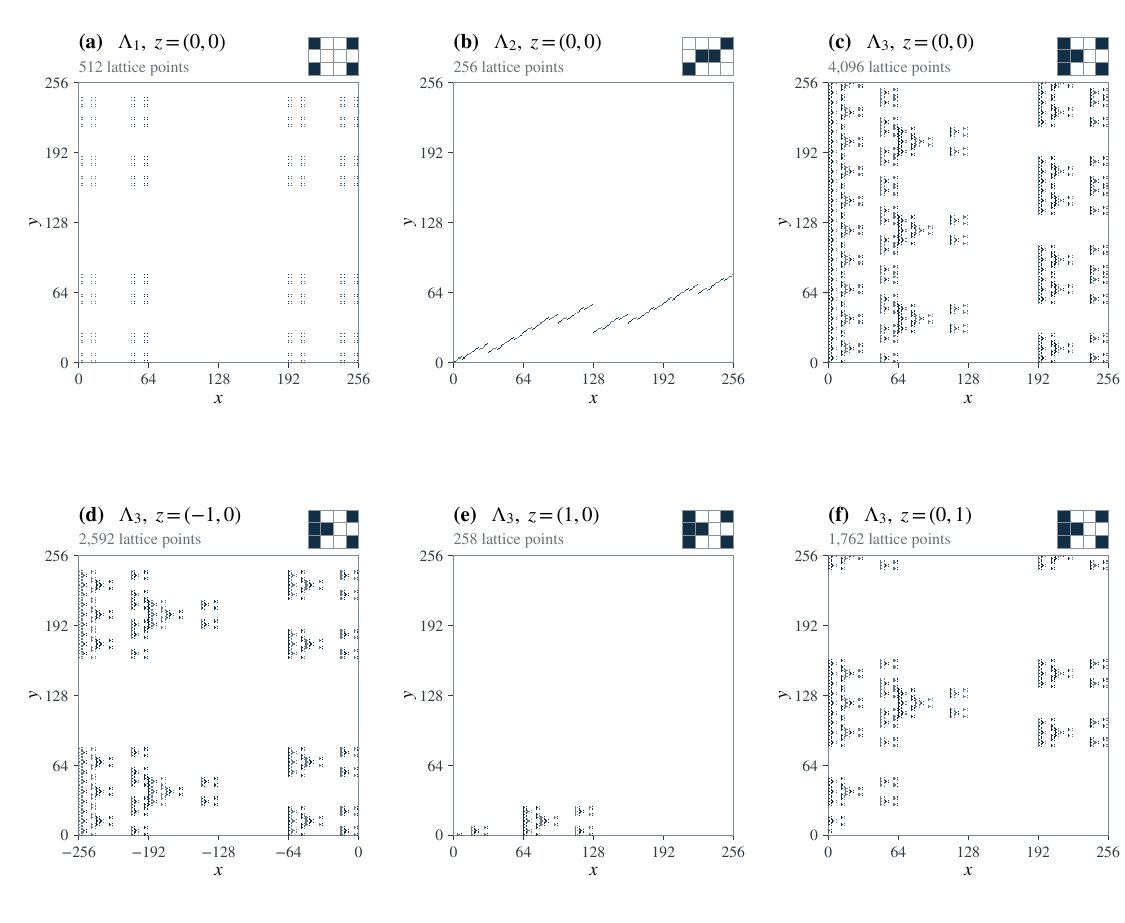}
			\caption{Forward orbits $\mathcal{O}_{\Lambda_j}^{+}(z)$
				for $m=4$ and $n=3$, shown in the window $[0,256)^2$,
				except for panel (d), which uses $[-256,0)\times[0,256)$.
				Shaded cells in the $4\times3$ insets indicate the digit sets.}
			\label{fig:intro-orbits}
		\end{figure}

		Mass dimensions measure the polynomial growth of the number of
		points in windows centered at a fixed point. They form part of
		the discrete dimension theory developed by Barlow and
		Taylor~\cite{BarlowTaylor1989,BarlowTaylor1992}. They have also been
		used in discrete analogues of geometric measure theory:
		Glasscock~\cite{Glasscock2016} proved Marstrand-type projection
		results for counting and upper mass dimensions, with applications
		to the dimensions of typical linear sumsets, and
		Pathak~\cite{Pathak2024} established a corresponding slicing
		theorem for mass dimension.
		Glasscock, Moreira, and Richter~\cite{GMR2024} used mass
		dimensions to study intersections and sumsets of integer sets
		invariant under multiplicatively independent bases, establishing
		discrete analogues of Furstenberg's transversality results.
		These results make mass dimension
		useful for studying projections, intersections with strips, and
		additive properties of sparse sets.
		
		Given $A\subseteq \Z^2$, recall that the {\em lower} and {\em upper mass dimensions} of $A$ are defined as
		\[
		\dimLM A=\liminf_{h\to+\infty}\f{\log \#(A\cap [-h,h]^2)}{\log h},\quad \dimUM A=\limsup_{h\to+\infty}\f{\log \#(A\cap [-h,h]^2)}{\log h}.
		\]
		If $\dimLM A=\dimUM A$, then the common value, the {\em mass dimension} of $A$, is denoted by $\dimM A$. 
		
		The Beurling dimension uses the largest point count among all
		translates of a window, and thus also detects dense regions far
		from the origin. An important application is to frequency sets
		of exponential frames for fractal measures. Dutkay, Han, Sun,
		and Weber~\cite{DHSW2011} related the Beurling dimension of such
		frequency sets to the Hausdorff dimension of the underlying
		fractal, proving equality under additional hypotheses.
		He, Kang, Tang, and Wu~\cite{HKTW2018} obtained sharp upper
		bounds for the Beurling dimensions of Bessel sets and frame
		spectra for classes of self-similar measures.
		This connects the spatial distribution of an unbounded set of
		frequencies with the geometry of a compactly supported measure.
		
		Fourier spectra of planar self-affine measures provide a
			closely related class of expanding orbits. Recall that a set
			$\Gamma\subset\mathbb R^2$ is a \emph{spectrum} of a probability
			measure $\mu$ on $\mathbb R^2$ if
			$\{x\mapsto e^{2\pi i\langle\gamma,x\rangle}:\gamma\in\Gamma\}$
			is an orthonormal basis for $L^2(\mu)$.
			Consider the Sierpi\'nski-type measure $\mu_{D,\La}$ with
			equal weights for the maps $x\mapsto D^{-1}(x+d),d\in\La$, where
			$D=\operatorname{diag}(m,n)$ and
			$\La=\{(0,0),(1,0),(0,1)\}$.
			Dutkay and Jorgensen~\cite{DJ2007} treated the integer case
			$m=n$, and Li~\cite{Li2010} the integer diagonal case:
			$\mu_{D,\La}$ is spectral if and only if
			$m,n\in3\mathbb Z$.
			Deng and Lau~\cite{DL2015} and Dai, Fu, and Yan~\cite{DFY2021}
			extended this criterion to real expansion factors in the
			self-similar and diagonal self-affine settings, respectively. When $m,n\in3\mathbb Z$, the frequency digit set
			$\La'=\{(0,0),(m/3,-n/3),(-m/3,n/3)\}$ gives the spectrum
			\[
			\Gamma_{\La'}=\bigcup_{k\ge1}
			\left\{\sum_{\ell=0}^{k-1}D^\ell d_\ell:
			d_\ell\in \La'\right\};
			\]
			see \cite{Li2010,LW2025Spectra}.
			Thus $\Gamma_{\La'}$ is the orbit of the origin under the expanding
			maps $x\mapsto Dx+d$, $d\in \La'$, with signed frequency digits.
			Li and Wu~\cite{LW2023Beurling} proved that
			$\dimbe\Gamma_{\La'}=\log_m3$.
			They subsequently showed that $\log_m3$ is the optimal upper
			bound for the Beurling dimensions of spectra of
			$\mu_{D,\La}$, and that every value in $[0,\log_m3]$
			is attained by a spectrum~\cite{LW2025Spectra}.
		
		Given $\alpha \ge 0$, the  \emph{$\alpha$-Beurling density} (or the \emph{upper Beurling density of order $\alpha$}) of $A$ is defined by
		\begin{equation}\label{def_Bedensity}
			\mathcal{D}^+_\alpha(A) = \limsup_{h\to+\infty} \sup_{z\in\R^2} \frac{\#(A \cap (z+[-h,h]^2))}{h^\alpha}.
		\end{equation}
		The \emph{Beurling dimension} (or \emph{upper Beurling dimension}) of $A$ is given by
		\[
		\dimbe A = \sup\{\alpha\ge 0 : \mathcal{D}^+_\alpha(A) > 0\} = \inf\{\alpha>0 : \mathcal{D}^+_\alpha(A) < \infty\}.
		\]
		Equivalently,
		\[
		\dimbe A = \limsup_{h\to+\infty} \sup_{z\in\R^2} \frac{\log \#(A \cap (z+[-h,h]^2))}{\log h}.
		\]
		It follows immediately that $\dimbe A \ge \dimUM A$.
		
		Covering-based dimensions also take account of the arrangement
		of points within large windows. The discrete Hausdorff dimension
		of Barlow and Taylor~\cite{BarlowTaylor1989,BarlowTaylor1992} is
		widely called the \emph{macroscopic Hausdorff dimension},
		particularly in probability. It measures the normalized cost of
		covering a set in successive annuli, using cubes whose side
		lengths are bounded below by one. Applications include the
		geometry of random-walk ranges: Georgiou, Khoshnevisan, Kim,
		and Ramos~\cite{GKKR2018} determined the macroscopic Minkowski
		and Hausdorff dimensions of the range of an arbitrary transient
		random walk on $\Z^d$. In a different direction,
		Khoshnevisan, Kim, and Xiao~\cite{KKX2017,KKX2018} used macroscopic
		Hausdorff dimension to describe the multifractal structure of the
		tall peaks of solutions to parabolic stochastic partial
		differential equations. Their work also distinguishes this
		large-scale multifractality from intermittency.
		The same covering framework can therefore describe both
		deterministic lattice sets and random sets at large scales.
		
		We use the following lattice formulation. Given
		$x\in\mathbb Z^2$ and $r\ge1$, define the cubes
		\begin{gather*}
			C(x,r) = \{y \in \mathbb{Z}^2 : x_i \le y_i < x_i + r,\ i=1,2\},\\
			V(x,r) = \{y \in \mathbb{Z}^2 : x_i - r/2 \le y_i < x_i + r/2,\ i=1,2\},
		\end{gather*}
		and let $\mathscr{C} = \{C(x,r) : x\in\mathbb{Z}^2,\ r\in\mathbb{Z}_{>0}\}$. For a finite set $A \subset \mathbb{Z}^2$, set
		\[
		d(A) = \min\{ r \in \mathbb{Z}_{>0} : A \subseteq C(x,r) \text{ for some } x\in\mathbb{Z}^2\}.
		\]
		Let $V_k^{(2)}=V(0,2^k)$ for $k\ge0$, put
		$S_1^{(2)}=V_1^{(2)}$, and set
		$S_k^{(2)}=V_k^{(2)}\setminus V_{k-1}^{(2)}$ for $k\ge2$.
		Then $d(V_k^{(2)})=d(S_k^{(2)})=2^k$ for $k\ge1$.
		For $A,F \subset \mathbb{Z}^2$ with $F$ finite and nonempty and $\alpha>0$, define
		\[
		\nu_\alpha(A,F) = \min\left\{ \sum_i \left(\frac{d(U_i)}{d(F)}\right)^{\alpha} : U_i \in \mathscr{C},\ A \cap F \subseteq \bigcup_i U_i \right\}.
		\]
		The \emph{discrete Hausdorff dimension}, or \emph{macroscopic Hausdorff dimension}, of $A$ is
		\[
		\dimdH A = \inf\Bigl\{ \alpha>0 : \sum_{n=1}^\infty \nu_\alpha(A,S_n^{(2)}) < \infty \Bigr\}.
		\]
		The \emph{lower discrete Hausdorff dimension} is defined using convergence to zero in place of summability:
		\[
		\dimL A = \inf\Bigl\{ \alpha>0 : \lim_{n\to\infty} \nu_\alpha(A,S_n^{(2)}) = 0 \Bigr\}.
		\]
		
		These definitions give $\dimL A\le\dimdH A$, but the two
		dimensions need not coincide for general lattice sets. The
		lower discrete Hausdorff dimension also occurs in harmonic
		analysis. Li, Zeng, and Wu~\cite{LZW2024,LZW2025corr} used it
		to compare the size of Fourier spectra of Moran measures with
		the Hausdorff dimension of their supports. This provides another
		connection between discrete dimension and the geometry of
		fractal measures.
		
		To compare point counts and coverings across intermediate
		scales, we also consider the discrete packing dimension, the
		lower entropy index, and the large-scale Assouad dimension.
		Discrete packing dimension also has applications to random
		media: Xiao and Zheng~\cite{XZ2013} determined the discrete
		Hausdorff and packing dimensions of the ranges of random walks
		in an i.i.d. conductance environment on $\Z^d$, $d\ge3$, with
		conductances bounded below by a positive constant.
		We first fix our covering and packing conventions.
		
		For $A\subseteq\mathbb R^d$, let $N_r(A)$ be the minimum number
		of axis-parallel cubes of side length $r$ needed to cover $A$,
		and let $P_r(A)$ be the maximum cardinality of a family of
		pairwise disjoint axis-parallel cubes of side length $r$
		centered at points of $A$. Throughout this paper, the scale
		parameter $r$ in these quantities is always assumed to satisfy
		$r\ge1$. Standard covering and packing estimates give
		\[
		N_r(A)\asy_d P_r(A).
		\]
		Moreover, since $r/2\le\lfloor r\rfloor\le r$ for $r\ge1$,
		\[
		N_{\lfloor r\rfloor}(A)\asy_d N_r(A),
		\qquad
		P_{\lfloor r\rfloor}(A)\asy_d P_r(A),
		\]
		with constants independent of $A$ and $r$. We shall therefore
		use real and integer scales interchangeably, since this changes
		the relevant estimates only by multiplicative constants and
		does not affect the resulting dimensions. When real scales are
		used, maxima and minima over $r$ are understood as suprema and
		infima, respectively.
		
		For $A\subseteq\mathbb Z^2$, define the \emph{discrete packing
			dimension} (or the \emph{upper entropy index}) of $A$ by 
		\[
		\dimdP A=\inf\bigg\{\alpha\ge 0:\lim_{k\to\infty}\max_{1\le r\le 2^{k(1-\eps)}}\Big(\f{r}{2^k}\Big)^\alpha P_r(A\cap S_k^{(2)})=0\text{\ for each\ }\eps\in (0,1)\bigg\}
		\]
		and the \emph{lower entropy index} of $A$ by
		\[
		\de (A)=\inf\bigg\{\alpha\ge 0:\lim_{k\to\infty}\min_{1\le r\le 2^{k(1-\eps)}}\Big(\f{r}{2^k}\Big)^\alpha P_r(A\cap S_k^{(2)})=0\text{\ for some\ }\eps\in (0,1)\bigg\}.
		\]
		The classical Assouad dimension is closely related to the
		doubling property and to Euclidean embeddings of snowflaked
		metric spaces; see Naor and Neiman~\cite{NN2012}. Here we use
		its large-scale version. Writing $B(x,R)$ for the Euclidean
		ball of radius $R$ centered at $x$, we define
		\[
		\begin{split}
			\dima A=\inf\biggl\{\alpha\ge0:\ &\text{there exists }C>0\text{ such that}\\
			&N_r(A\cap B(x,R))\le C(R/r)^\alpha
			\text{ for every }x\in A\text{ and }1\le r<R\biggr\}.
		\end{split}
		\]
		For every nonempty $A\subseteq\Z^2$, the following inequalities hold:
		\[
		0 \,
		\mathrel{\vcenter{\hbox{%
					\setlength{\arraycolsep}{2pt}%
					\renewcommand{\arraystretch}{0.85}%
					$\begin{array}{@{}ccc@{}}
						\rotatebox[origin=c]{15}{$\le$}
						&
						\raisebox{1pt}{\,$ \dimL A\le \dimdH A\le\de(A)$\,}
						&
						\raisebox{0.5pt}{\rotatebox[origin=c]{-15}{$\le$}}
						\\[4pt]
						\rotatebox[origin=c]{-15}{$\le$}
						&
						\raisebox{-2pt}{$\dimLM A$}
						&
						\raisebox{-0.5pt}{\rotatebox[origin=c]{15}{$\le$}}
					\end{array}$}}}
		\,\dimUM A\,\mathrel{\vcenter{\hbox{%
					\setlength{\arraycolsep}{2pt}%
					\renewcommand{\arraystretch}{0.85}%
					$\begin{array}{@{}ccc@{}}
						\rotatebox[origin=c]{15}{$\le$}
						&
						\raisebox{1pt}{$\dimdP A$}
						&
						\raisebox{0.5pt}{\rotatebox[origin=c]{-15}{$\le$}}
						\\[4pt]
						\rotatebox[origin=c]{-15}{$\le$}
						&
						\raisebox{-2pt}{$\dimbe A$}
						&
						\raisebox{-0.5pt}{\rotatebox[origin=c]{15}{$\le$}}
					\end{array}$}}}\, \dima A\le 2.
		\]
		In general, the pairs
		\[
		(\dimL A,\dimLM A),\quad (\dimdH A,\dimLM A),\quad
		(\de(A),\dimLM A),\quad(\dimbe A,\dimdP A)
		\]
		are not comparable; see Example \ref{distinguish} and examples in \cite{BarlowTaylor1992}.
		\begin{remark}\label{finite stability}
			Except for the lower mass dimension and the lower entropy index,
			all the dimensions introduced above are finitely stable. Namely, if
			$A=\bigcup_{j=1}^s A_j$, then
			\[
			\textup{Dim} A=\max_{1\le j\le s}\textup{Dim} A_j
			\]
			for $
			\textup{Dim}\in\{\dimUM,\dimL,\dimdH,\dimdP,\dimbe,\dima\}.$
			The finite stability follows directly from the corresponding
			subadditivity estimates for counting, covering, and packing quantities.
			In contrast, $\dimLM$ and $\delta$ need not be finitely stable, since
			the relevant $\liminf$ and the minimization over intermediate scales,
			respectively, need not commute with finite unions.
		\end{remark}

		Related results for expanding systems were obtained by
		Miao and Xu~\cite{MX26}. For one-dimensional affine expanding iterated function systems, they proved that every locally finite forward orbit has mass dimension equal
		to the growth exponent of the semigroup of distinct maps. If the orbit
		is uniformly locally finite, its Beurling dimension is equal to the same
		exponent. They further related this
		exponent to the dual attractor and decomposed locally finite
		invariant sets into finitely many forward orbits. We use the
		decomposition theorem in Section~\ref{section invariant} and recall
		the precise dimension comparisons in Section~\ref{semigroup growth section}.
		
		The planar systems considered here exhibit a different
		phenomenon when $m>n$. Although every orbit is uniformly
		discrete and distinct symbolic words define distinct affine
		maps, the mass and Beurling dimensions can differ. Moreover,
		the mass dimension and the covering-based dimensions can
		depend on the starting point. Our main results determine all
		the dimensions introduced above and identify the precise
		roles of the two expansion factors, the column sizes, and
		the starting point. To state them, we introduce the following
		notation.
		
		For $i\in\{0,1,\ldots,m-1\}$, we denote
		\[
		\col_i=\{j:(i,j)\in\La\},\qquad N_i=\#\col_i,\qquad N_\La=\#\{i:N_i>0\},\qquad \Nm=\max_{0\le i\le m-1} N_i.
		\]
		Let
		\[
		\eta(u)=\begin{cases}
			\max\{1,N_0\}, & u=0,\\
			\max\{1,N_{m-1}\}, \quad & u=-1,\\
			1, & \text{otherwise}.
		\end{cases}
		\]
		The parameter $\eta(u)$ records the size of an endpoint column
		when the initial horizontal coordinate allows the leading
		horizontal digits to cancel its expansion. This occurs for
		$u=0$ in column $0$ and for $u=-1$ in column $m-1$. Otherwise,
		no such column contributes additional branching at the
		larger vertical digit levels, and $\eta(u)=1$.
		
		With this notation, we can state the dimension formulae.
		We distinguish the case of equal expansion factors, in which all
		the dimensions coincide, from the case of unequal factors, in
		which the column data enter the formulae in different ways.
		
		\begin{theorem}\label{main theorem m=n}
			If $m=n$, then for each $z\in\Z^2$, 
			\[
			\textup{Dim}\, \fola(z)=\log_m \#\La,
			\]
			where $\textup{Dim}\in\{\dimL,\dimdH,\de(\cdot),\dimM,\dimdP,\dimbe,\dima\} $.
		\end{theorem}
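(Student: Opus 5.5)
Throughout write $s=\log_m\#\La$ and $O=\fola(z)$. By the chain of inequalities displayed before Remark~\ref{finite stability}, it is enough to prove two bounds:
\[
\dima O\le s
\qquad\text{and}\qquad
\dimL O\ge s\ \text{ and }\ \dimLM O\ge s.
\]
Every other listed dimension is squeezed between these. The structural fact used throughout is the following. When $m=n$, the map $S_{(\bi,\bj)}$ for $(\bi,\bj)\in\La^k$ is $w\mapsto m^kw+d$ with $d=\sum_{\ell<k}m^\ell(i_\ell,j_\ell)\in[0,m^k-1]^2$. Distinct words give distinct $d$, because base-$m$ expansions are unique in each coordinate. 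Hence $\La_k(z)=m^kz+D_k$, where $D_k$ is a set of exactly $(\#\La)^k$ points in $[0,m^k)^2$, and $D_k$ is self-similar: $D_{k+l}=D_k+m^kD_l$.

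\textbf{Upper bound.} Take $x\in O$ and $1\le r<R$, and choose $p\le q$ with $m^{p}\asymp r$ and $m^q\asymp R$. For $k\ge q$, every point of $\La_k(z)$ has the form $m^qw+e$ with $e\in D_q$ and $w\in\La_{k-q}(z)$ (or $w=z$). So $\bigcup_{k\ge q}\La_k(z)$ is contained in a union of translates $m^qw+[0,m^q)^2$ over a set of $w$ in $\Z^2$, and distinct $w$ give disjoint translates. The ball $B(x,R)$ meets only $O(1)$ of these translates. Within one translate, the points come from $m^qw+D_q$, and $D_q\subseteq D_p+m^pD_{q-p}$ is covered by $(\#\La)^{q-p}$ squares of side $m^p$. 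The finitely many levels $k<q$ need separate treatment. The set $\La_k(z)$ lies in $m^kz+[0,m^k)^2$, and for different $k$ these boxes are essentially disjoint unless $z$ is a fixed-type point (such as $0$ or $-1$ coordinates), where they nest. In either case, levels $k\le p$ contribute at most $O(q-p)$ further squares, once one notes that $\La_k(z)$ for $k\le p+O(1)$ fits in $O(1)$ squares of side $m^p$ near $x$. Levels $p<k<q$ contribute at most $\sum_{k}(\#\La)^{k-p}\lesssim(\#\La)^{q-p}$ squares. The case $\#\La=1$ must be handled separately, where the bound is the harmless $O(\log(R/r))$ and the orbit is still only a union of $O(1)$ points per scale. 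This gives $N_r(O\cap B(x,R))\lesssim (R/r)^{s}$, up to a factor of $\log(R/r)$ when $\#\La=1$, which is handled by the trivial case $s=0$. Hence $\dima O\le s$.

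\textbf{Lower bounds.} $\La_k(z)$ lies in the box $m^kz+[0,m^k)^2$, which has diameter $\lesssim m^k(|z|+1)$. This gives $\#(O\cap[-h,h]^2)\ge(\#\La)^k$ with $m^k\asymp h/(|z|+1)$, and so $\dimLM O\ge s$.

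For $\dimL$, fix $\alpha<s$. The step I expect to be the main obstacle is locating a large, uniformly spread piece of $O$ inside the annulus $S^{(2)}_N$ for every $N$. I would choose $k$ with $m^{k}\asymp 2^N$, with constants depending on $z$, and pass to a sub-level as follows. Take $z'\in\La_{c}(z)$ for a fixed $c$ with $z'\ne0$ and $z'\ne(-1,-1)$-type, so that $|m^kz'|\asymp m^k$ and the box $m^kz'+[0,m^k)^2$ stays a definite distance away from the origin. Then $m^kz'+D_k$, or a fixed fraction of it obtained by restricting the first few digits, lies in $S_N^{(2)}$ once the constants are adjusted, since the annuli have width comparable to $2^N$. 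The degenerate case is $\La$ a singleton, or an orbit accumulating only near $0$ or $-1$, and for $\#\La=1$ the claim $s=0$ is trivial.

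On this piece, let $\mu$ be the uniform measure on $m^kz'+D_k$, normalised to total mass $1$. By self-similarity, every square of side $r\ge1$ carries $\mu$-mass at most $\lesssim(r/m^k)^{s}$: it meets $O(1)$ cylinders $m^kz'+D_j+m^jD_{k-j}$ with $m^j\asymp r$, and each cylinder has mass $(\#\La)^{-(k-j)}$. The mass distribution principle then gives, for any cover $\{U_i\}$ of $O\cap S_N^{(2)}$,
\[
\sum_i\Bigl(\frac{d(U_i)}{2^N}\Bigr)^{\alpha}\gtrsim\sum_i\mu(U_i)\ge 1,
\]
where the first inequality uses $\alpha<s$ and $d(U_i)\le 2^{N}$ (larger cubes contribute at least $1$ by themselves). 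So $\nu_\alpha(O,S^{(2)}_N)\not\to0$, and $\dimL O\ge\alpha$. Letting $\alpha\uparrow s$ finishes the proof.
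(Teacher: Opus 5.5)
Your reduction, the Assouad upper bound, the mass lower bound and the Frostman estimate for the uniform measure on $m^kz'+D_k$ are essentially the paper's argument, apart from a small point noted at the end. \textbf{The gap is the annular localization for $\dimL$.} You claim that for every $N$ one can take $k$ with $m^k\asymp 2^N$ so that $m^kz'+D_k$, or a fixed-depth sub-cylinder of it, lies in $S_N^{(2)}$ ``once the constants are adjusted''. The width of the annulus is not the difficulty. The position of the piece relative to the annulus boundaries is fixed by the ratio $m^k/2^N$, which you cannot tune. In general, many annuli miss the orbit altogether. For instance, take $m=n=8$, $\La=\{(0,0),(1,0)\}$ and $z=(0,0)$. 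The orbit is $(0,0)$ together with the points $(x,0)$ whose base-$8$ digits lie in $\{0,1\}$. Each such $x\neq0$ lies in $[2^{3j},2^{3j+1})$ for some $j$. Hence $O\cap S_N^{(2)}=\emptyset$ whenever $N\ge3$ and $N\not\equiv2\pmod 3$, even though $s=1/3>0$. So the step fails as written. Your sketch also does not show the weaker statement you actually need, namely $\nu_\alpha(O,S_N^{(2)})\gtrsim1$ for infinitely many $N$.

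There are two ways to close the gap.

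\emph{The paper's route.} The paper avoids annuli entirely. By Proposition~\ref{base independence}(i), $\dimL$ may be computed with centred windows $V_k^{(m)}$. For a fixed $C$ with $m^C>2(\|z\|_\infty+1)$ one has $\La_{k-C}(z)\subseteq V_k^{(m)}$. The uniform measure on $\La_{k-C}(z)$ then gives $\nu_\alpha(O,V_k^{(m)})\ge K^{-1}$ directly.

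\emph{Keeping your annuli.} Replace alignment by pigeonholing. Take $z'\in\La_c(z)\setminus\{0,-1\}^2$, which exists once $(\#\La)^c>4$. Every point of $m^kz'+D_k$ then has sup-norm in $[m^k,(\|z'\|_\infty+1)m^k]$. So this set meets only $O_{z'}(1)$ annuli, and one of them, say $S_{N_k}^{(2)}$, carries a fixed fraction of $\mu$. Restrict $\mu$ to that annulus and renormalise. This keeps the bound $\mu(U)\lesssim(d(U)/m^k)^s$, with $2^{N_k}\asymp m^k$. It gives $\nu_\alpha(O,S_{N_k}^{(2)})\gtrsim1$ along $N_k\to\infty$, which suffices.

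\emph{The small point in the Assouad bound.} The levels $k\le p$ must be handled together. All of them lie in $[-(\|z\|_\infty+1)m^p,(\|z\|_\infty+1)m^p]^2$, hence in $O_z(1)$ squares of side comparable to $r$. Counting $O(1)$ squares per level, as your wording suggests, would give $O(p)$, which is not controlled by $R/r$.
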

		
		\begin{theorem}\label{main theorem m>n}
			If $m>n$, then for each $z=(u,v)\in\Z^2$,
			\begin{align*}
				\dimM\fola(z)&=\log_m\Big(\f{\#\La}{\eta(u)}\Big)+\log_n\eta(u),\\
				\dimbe\fola(z)&=\log_m\Big(\f{\#\La}{\Nm}\Big)+\log_n\Nm,\\
				\de(\fola(z))&=\min\Big\{\log_m\Big(\f{\#\La}{\eta(u)}\Big),\log_m N_\La\Big\}+\log_n\eta(u),\\
				\dimdP\fola(z)&=\max\Big\{\log_m\Big(\f{\#\La}{\eta(u)}\Big),\log_m N_\La\Big\}+\log_n\eta(u),\\
				\dima \fola(z)&=\log_m N_\La+\log_n \Nm,\\
				\dimL\fola(z)&=\dimdH \fola(z)=\min_{0\le\lambda\le1}\log_m\bigg(\sum_{i:N_i>0}\Big(\f{N_i}{\eta(u)}\Big)^\lam\bigg)+\log_n \eta(u).
			\end{align*}
		\end{theorem}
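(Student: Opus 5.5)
The plan is to reduce everything to a finite symbolic description of $\fola(z)$ and then count points in approximate squares. First, for $z=(u,v)$ and a word $(\bi,\bj)\in\La^k$, the point $S_{(\bi,\bj)}(z)$ has horizontal coordinate $m^ku+\sum_\ell i_\ell m^\ell$ and vertical coordinate $n^kv+\sum_\ell j_\ell n^\ell$. Since $0\le\sum i_\ell m^\ell<m^k$, distinct words of the same length give distinct points. Points of different lengths can coincide only through the endpoint cancellations encoded by $\eta(u)$: for $u=0$ prepending the digit column $0$, and for $u=-1$ the column $m-1$. This reduces $\La_k(z)$ inside $[-h,h]^2$ to a union of finitely many ``sheared'' copies of $\La_k(0)$ or of its reflected version.

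The basic geometric fact is the following. With $h\asymp m^k$ and $l=\lfloor k\log_m n\rfloor$ (so that $n^k\asymp h^{\log_m n}$, which is much less than $h$), a point of length $k$ has vertical size about $n^k|v|+O(n^k)$. For generic $v$ the whole level $\La_k$ lies in a horizontal strip of height $\asymp n^k\ll h$. Consequently the count in $[-h,h]^2$ is governed by the levels $k'$ with $m^{k'}\lesssim h$ together with the extra vertical levels between $\log_m h$ and $\log_n h$, which are reachable only when the horizontal coordinate stays bounded. That happens exactly when $u\in\{0,-1\}$ and the leading horizontal digits stay in the endpoint column. Counting gives $\#\La^{\log_m h}\cdot\eta(u)^{\log_n h-\log_m h}$, which is the mass dimension formula. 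For the Beurling dimension, translated windows can centre on a long run of any column with $N_i=\Nm$, which replaces $\eta(u)$ by $\Nm$. The upper bound follows by partitioning a window of side $h$ into the standard McMullen approximate squares. For the Assouad dimension, windows of side $R$ at $x\in\fola(z)$ are covered at scale $r$ by the classical approximate-square count $N_\La^{\log_m(R/r)}\Nm^{\log_n(R/r)}$, uniformly over positions. The lower bound comes from words that alternate a ``full-column'' phase and a ``spread-columns'' phase, as in Mackay's carpet argument.

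For the packing-type dimensions $\de$ and $\dimdP$, restrict to the annulus $S_k^{(2)}$, of size $2^k=:H$. The relevant part of the orbit is a horizontal band of width $H$ whose vertical extent is $H^{\log_m n}$ times an $\eta(u)$-branching tail up to height $H$. At scale $r$ with $r\le H^{\log_m n}$, covering by squares gives $(\#\La)^{\log_m(H/r)}$-type counts times $\eta(u)^{\dots}$. At scales $r$ between $H^{\log_m n}$ and $H$ the vertical structure collapses and only columns count, giving $N_\La^{\log_m(H/r)}$. Optimizing $(r/H)^\alpha P_r$ over $r$ then yields the extremal value at $r=1$ or $r\approx H^{\log_m n}$, which produces the $\min$/$\max$ of $\log_m(\#\La/\eta(u))$ and $\log_m N_\La$, plus $\log_n\eta(u)$.

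The Hausdorff part will be the main obstacle. For the upper bound on $\dimdH$ I would cover $\fola(z)\cap S_k^{(2)}$ by approximate squares of optimally varying scale. For each word, choose the cut level according to the empirical frequency of columns, as in McMullen's proof, and sum over type classes. The minimization over $\lambda$ arises from the Legendre dual of the entropy of the column-frequency vector with weights $N_i/\eta(u)$, because the common vertical tail of $\eta(u)$ factors out. For the matching lower bound on $\dimL$, I would build a Bernoulli-type measure on words with column probabilities $p_i\propto N_i^{\lambda^*}$, chosen uniformly within columns and pushed onto the annulus. I would then prove a discrete mass distribution principle: every cube $U$ meeting the annulus has mass at most $C\,(d(U)/2^k)^{\alpha-\eps}$ for all but an exponentially small exceptional set, which forces $\nu_\alpha\ge c>0$. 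Controlling the large deviations uniformly across all cube sizes $1\le d(U)\le 2^k$ is what I expect to be delicate, in particular near the transition scale $2^{k\log_m n}$ where the $\eta(u)$ tail and the column structure interact. Finally, when $m=n$, $\eta$ and the column data drop out, and all the estimates collapse to $\log_m\#\La$. This gives Theorem \ref{main theorem m=n} as the degenerate case of the same argument.
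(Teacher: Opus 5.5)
Your overall architecture matches the paper: a finite symbolic model, approximate squares, and vertical tails governed by $\eta(u)$ in centred windows and by $\Nm$ in translated ones. The mass, Beurling, packing and Assouad sketches are sound. There are, however, two genuine gaps.

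\textbf{The lower bound for $\de$.} The definition takes a minimum over $r$ inside a single annulus. So for every $\eps\in(0,1)$ you need infinitely many $j$ with $P_r(\fola(z)\cap S_j^{(2)})\gtrsim\Phi(u,r,2^j)$ holding \emph{simultaneously} for all $1\le r\le 2^{j(1-\eps)}$. This does not follow from window counts. The quantity $P_r(\fola(z)\cap V^{(2)}_{j-1})$ is of the same order as $P_r(\fola(z)\cap V^{(2)}_j)$, and the upper bound even carries a factor $1+j$, so subadditivity says nothing about the annulus. Pigeonholing only yields a good annulus for each $r$ separately. Examples~\ref{lower entropy radix counterexample} and~\ref{lower entropy window counterexample} show that the index is genuinely sensitive to this distinction.

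Your picture of the orbit inside $S_k^{(2)}$ as a complete band is therefore unsupported. Dyadic annuli are incommensurable with powers of $m$ and $n$, and in general an annulus meets the orbit only in fragments of models, or not at all. The missing idea is annular localization (Lemma~\ref{annular model localization}):
\begin{enumerate}[(i)]
\item fix a depth $S$ independent of $r$;
\item pass to a subsequence along which $m^k/2^{j_k}$ and $n^{q_k}/2^{j_k}$ converge;
\item choose the top $S$ horizontal digits and the top $e_S$ tail digits from the uncountable digit sets, avoiding the countably many positions that hit an annulus boundary, so that a whole depth-$S$ approximate square lies strictly inside one annulus (the case $N_\La=\eta(u)=1$ needs separate treatment);
\item apply the descendant bound $P_r(Q)\ge C_S\Phi(u,r,m^k)$, valid for all $r\le m^{k-S}$ (Lemma~\ref{model descendants}).
\end{enumerate}

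You also need the correct count at every intermediate scale, and yours is misstated. For $n^{p(H)}\le r\le H$ the count is $N_\La^{p(H)-p(r)}\eta(u)^{q(H)-q(r)}$, not $N_\La^{\log_m(H/r)}$, because the tail levels between $q(r)$ and $q(H)$ remain visible. For $1\le r\le n^{p(H)}$ it is $N_\La^{q(r)-p(r)}(\#\La)^{p(H)-q(r)}\eta(u)^{q(H)-p(H)}$. Taken literally, your version would give $\de\le\log_m N_\La<\underline D(u)$ whenever $\eta(u)>1$. The lower bound rests on $\Phi(u,r,H)\gtrsim(H/r)^{\underline D(u)}$ for every $r$. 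This comes from writing $\log\Phi=\bigl((L-T)D_1(u)+TD_0(u)\bigr)\log m+O(1)$, where $L=\log_m(H/r)\ge T=(\log_m H-\log_n r)_+\ge0$.

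\textbf{The upper bound for the Hausdorff dimensions.} The hard step is here, not in the lower bound you flag. How many depth-$s$ approximate squares a family of words needs depends on column statistics over two different ranges of levels. One is the top $s$ horizontal digits. The other is the top $t_s=\max\{0,\lceil\tau s-(\tau-1)k\rceil\}$ levels, whose column sizes count the visible mixed vertical digits; the tail contributes $\eta(u)^{e_s}$.

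A cut level chosen from a single empirical frequency vector per word cannot work in general. Suppose the column frequencies near the top differ from those further down, say $N(\bs)<c_u$ near the top and $N(\bs)>c_u$ below. Then for $\alpha$ slightly above $\Delta(u)$ it can happen that neither $s=k$ nor the largest $s$ with $t_s=0$ gives a gain. What you need is a quantitative, uniform-in-$k$ form of McMullen's lemma that gives a gain $e^{-\delta k}$ at some depth for every word. This is what makes the contents summable after the $O(k)$ loss in transferring from the orbit to the model.

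The paper does this as follows. It takes depths $s_j=\lfloor k(1-\tau^{-j})\rfloor$, for which $t_{s_j}=s_{j-1}+O(1)$. It then sums $\kappa s_j+\lambda T_{s_j}-T_{t_{s_j}}$ with weights $\lambda^{j-1}$ for a fixed $\lambda<1$, so that the sum telescopes (Lemma~\ref{Hausdorff stopping lemma}). Here $T_s$ is the sum of $\log N_{i_\ell}-c_u$ over the top $s$ levels, and $\kappa=\alpha\log m-\log Z_\lambda-(\tau-\lambda)c_u>0$. Finally it uses a first-stopping-time partition for the Bernoulli measure with column weights $N_i^\lambda/Z_\lambda$. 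A count over joint types of these finitely many blocks would serve equally well, but this multi-scale step is the essential content missing from your plan.

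\textbf{The lower bound for the Hausdorff dimensions.} Your plan here is workable. The measure with weights $N_i^{\lambda_*}/Z_{\lambda_*}$, restricted to a large-deviation good set, satisfies the required mass bound. The paper avoids large deviations entirely by using the uniform measure on words whose length-$L$ blocks have an exact rational type. You also need not push the measure onto annuli, which would again require localization. By Proposition~\ref{base independence}(i), both $\dimL$ and $\dimdH$ can be computed using the windows $V_k^{(m)}$.
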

		
		The equal-expansion case gives a common value for all the
		dimensions. For unequal expansion factors, the formulae
		separate several different features of the digit set. The
		Beurling and Assouad dimensions involve the largest column,
		whereas the mass dimension involves the endpoint parameter
		$\eta(u)$. The two Hausdorff dimensions coincide and are
		given by a minimization involving all nonempty columns.	In Section~\ref{section hausdorff}, we express this common value
		both as a minimum of topological pressures on the full shift over
		the nonempty columns and as an entropy maximization over probability
		vectors on these columns.
		The discrete packing dimension and the lower entropy index
		are, respectively, the larger and smaller of two explicit
		counting exponents. In particular, even these simple affine
		systems allow several distinct dimension values, as the
		following examples show.
		
		\begin{example}\label{distinguish}
			Let $m=7$, $n=6$, $z=(0,0)$, and
			\[
			\begin{aligned}
				\Lambda_1
				&=
				\bigl(\{0\}\times\{0,1,2\}\bigr)
				\cup\{(1,0)\}
				\cup
				\bigl(\{2\}\times\{0,1,\ldots,5\}\bigr),\\
				\Lambda_2
				&=
				\bigl(\{0\}\times\{0,1,2\}\bigr)
				\cup
				\bigl(\{1,2,3,4\}\times\{0\}\bigr)
				\cup
				\bigl(\{5,6\}\times\{0,1,\ldots,5\}\bigr).
			\end{aligned}
			\]
			Set $E_i=\mathcal O_{\Lambda_i}^{+}(z), i=1,2$.
			Figure~\ref{fig:two-orbits-m7-n6} compares these two orbits in
			the same window. The calculations below give their dimension values
			and show that the dimensions occur in different orders.
			
			Put
			\[
			\begin{aligned}
				\Delta_1&=\log_7\left(1+\log_36\cdot(\log_23)^{\log_62}\right)+\log_63,\\
				\Delta_2
				&=\log_7\left(1+2\log_36\cdot(2\log_23)^{\log_62}\right)+\log_63.
			\end{aligned}
			\]
			
			\noindent\begin{minipage}{\textwidth}
				For the first orbit, the dimension formulae give
				\[
				\begin{alignedat}{2}
					\dimL E_1
					=\dim_{\mathrm{DH}}E_1
					&=\Delta_1,
					&\qquad
					\Delta_1&\approx1.168912,\\
					\delta(E_1)
					&=\log_7 3+\log_6 3,
					&
					&\approx1.177722,\\
					\dim_{\mathrm M}E_1
					=\dim_{\mathrm{DP}}E_1
					&=\log_7\frac{10}{3}+\log_6 3,
					&
					&\approx1.231867,\\
					\dim_{\mathrm{Be}}E_1
					&=1+\log_7\frac{5}{3},
					&
					&\approx1.262512,\\
					\dim_{\mathrm A}E_1
					&=1+\log_7 3,
					&
					&\approx1.564575.
				\end{alignedat}
				\]
				Hence
				\[
				\dimL E_1=\dim_{\mathrm{DH}}E_1<
				\delta(E_1)<\dim_{\mathrm M}E_1=\dim_{\mathrm{DP}}E_1<
				\dim_{\mathrm{Be}}E_1<\dim_{\mathrm A}E_1.
				\]
			\end{minipage}
			
			\medskip
			
			\noindent\begin{minipage}{\textwidth}
				For the second orbit, one has
				\[
				\begin{alignedat}{2}
					\dimL E_2
					=\dim_{\mathrm{DH}}E_2
					&=\Delta_2,
					&\qquad
					\Delta_2&\approx1.542154,\\
					\dim_{\mathrm M}E_2
					=\delta(E_2)
					&=\log_7\frac{19}{3}+\log_6 3,
					&
					&\approx1.561714,\\
					\dim_{\mathrm{Be}}E_2
					&=1+\log_7\frac{19}{6},
					&
					&\approx1.592360,\\
					\dim_{\mathrm{DP}}E_2
					&=1+\log_6 3,
					&
					&\approx1.613147,\\
					\dim_{\mathrm A}E_2
					&=2.
					&
				\end{alignedat}
				\]
				Consequently,
				\[
				\dimL E_2=\dim_{\mathrm{DH}}E_2<\dim_{\mathrm M}E_2=\delta(E_2)<\dim_{\mathrm{Be}}E_2<\dim_{\mathrm{DP}}E_2<\dim_{\mathrm A}E_2.
				\]
			\end{minipage}
			\begin{figure}[htbp]
				\centering
				\includegraphics[width=\linewidth]{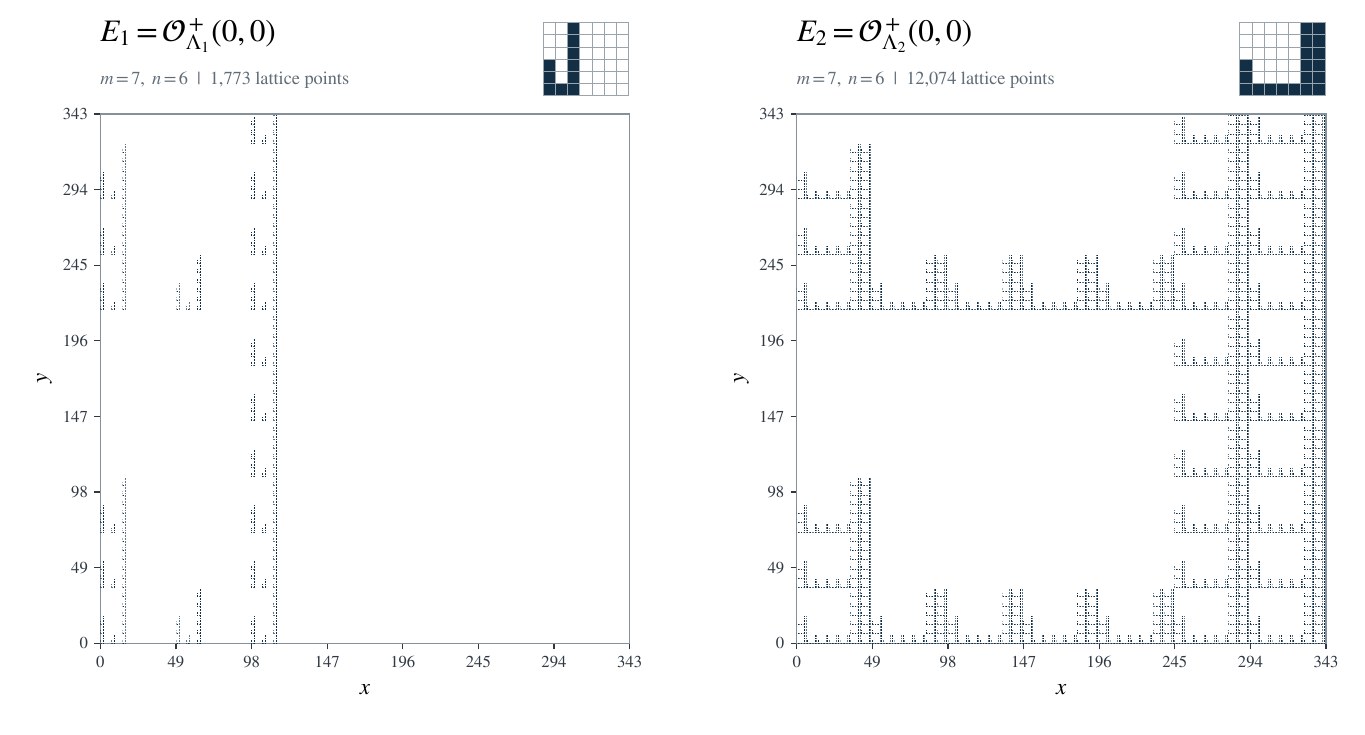}
				\caption{The forward orbits $E_i=\mathcal O_{\Lambda_i}^{+}(0,0)$,
					$i=1,2$.
					Each panel shows every orbit point in
					$[0,343)^2\cap\mathbb Z^2$.}
			\label{fig:two-orbits-m7-n6}
		\end{figure}
	\end{example}

	To prove the dimension formulae in Theorems \ref{main theorem m=n} and \ref{main theorem m>n} , we use a finite
	symbolic model that reflects the different coordinate scales. At scale $m^k$, there are $k$
	levels of paired horizontal and vertical digits and up to
	$\lfloor k\log m/\log n\rfloor-k$ additional vertical levels.
	Nested approximate squares record the digits visible at a
	given resolution. Their counts give the mass, Beurling,
	packing, and Assouad estimates, while an entropy--pressure
	argument gives the two Hausdorff dimensions. Allowing the
	vertical alphabets to vary from level to level lets us
	represent orbit pieces in translated windows by the same
	class of models. A separate localization argument places
	suitable model pieces in annuli, as required for the lower
	entropy index.
	
	In parallel with the one-dimensional study in \cite{MX26}, we
	determine the dimensions of invariant lattice sets and compare
	orbit dimensions with semigroup growth and the dimensions of the
	dual attractor. These comparisons reveal a distinction between
	the one- and two-dimensional settings: when $m>n$, the semigroup
	growth exponent, the mass and Beurling dimensions of the orbit,
	and the box and affinity dimensions of the dual attractor form
	a nondecreasing sequence in which all four inequalities can be
	strict. This reflects the unequal coordinate expansion rates:
	the orbit dimension formulae involve column data, whereas those
	of the dual carpet involve row data.

	The paper is organized as follows. Section \ref{section bases} discusses changes of base and discrete scales. In Section \ref{section model}, we develop a finite symbolic model for the forward orbit and establish the approximate-square estimates and transfer principles used throughout the paper. Section \ref{section counts} applies these estimates to the mass, Beurling, discrete packing, and Assouad dimensions, while Section \ref{section hausdorff} treats the discrete Hausdorff and lower discrete Hausdorff dimensions through an entropy--pressure approach. The case $m=n$ is handled separately in Section \ref{section equal}. Section \ref{section lower entropy} is devoted to the lower entropy index, whose annular nature requires a separate argument. In Section \ref{section invariant}, we characterize invariant subsets of $\Z^2$ and determine their dimensions from those of the constituent forward orbits. Finally, Section \ref{semigroup growth section} compares the orbit dimensions with semigroup growth and the dimensions of the dual Bedford--McMullen attractor.
	
	Throughout the paper, we write
	\[p(a)=\lfloor\log_m a\rfloor,\qquad
	q(a)=\lfloor\log_n a\rfloor\]
	for $a\ge1$, and
	$Q(\omega,h)=\omega+[-h,h]^2$ for
	$\omega\in\mathbb R^2$ and $h>0$.
	
	\section[Preliminaries on discrete dimensions]{Preliminaries on discrete dimensions}\label{section bases}
	
	For $t>1$, put $V_k^{(t)}=V(0,t^k)$, $S_1^{(t)}=V_1^{(t)}$, and
	$S_k^{(t)}=V_k^{(t)}\setminus V_{k-1}^{(t)}$ for $k\ge2$.
	Let $I_k^{(t)}=\Z\cap[-t^k/2,t^k/2).$
	Then $V_k^{(t)}=I_k^{(t)}\times I_k^{(t)}$, and the consecutive
	integer coordinates give the exact formula
	\begin{equation}\label{window lattice diameter}
		d(V_k^{(t)})=\#I_k^{(t)}
		=\left\lceil\frac{t^k}{2}\right\rceil
		+\left\lfloor\frac{t^k}{2}\right\rfloor=t^k+O(1).
	\end{equation}
	If $k\ge2$ and $S_k^{(t)}\ne\emptyset$, choose
	$y_k\in I_k^{(t)}\setminus I_{k-1}^{(t)}$. Then
	$I_k^{(t)}\times\{y_k\}\subseteq S_k^{(t)}\subseteq V_k^{(t)}$,
	so $d(S_k^{(t)})=d(V_k^{(t)})$. This equality is also immediate
	for $k=1$. Since $t^k-t^{k-1}\to\infty$, the sets
	$I_k^{(t)}\setminus I_{k-1}^{(t)}$ are nonempty for all sufficiently
	large $k$. Thus only finitely many annuli can be empty. We set
	$\nu_\alpha(A,\emptyset)=0$ for those indices. When $t\ge2$ is
	an integer, every annulus is nonempty and
	$d(S_k^{(t)})=d(V_k^{(t)})=t^k$ for every $k\ge1$.
	For a finite set $E\subset\Z^2$, define
	\[
	\mathscr H_\alpha(E)=\inf\left\{\sum_{a=1}^{J}d(U_a)^\alpha:
	E\subseteq\bigcup_{a=1}^{J}U_a,\ U_a\in\mathscr C\right\},
	\qquad \mathscr H_\alpha(\emptyset)=0.
	\]
	This cost is monotone, subadditive, and invariant under integer
	translations, and
	$\nu_\alpha(A,F)=d(F)^{-\alpha}\mathscr H_\alpha(A\cap F)$ when $F$ is
	finite and nonempty. Write $\delta_t(A)$ for the lower entropy index
	obtained by replacing $S_k^{(2)}$ and $2^k$ by $S_k^{(t)}$ and $t^k$.
	Thus the index in the introduction is $\delta(A)=\delta_2(A)$.

	\begin{proposition}\label{base independence}
		Let $A\subseteq\Z^2$ be nonempty and let $t>1$.\\
		(i) \begin{gather*}
			\dimdH A = \inf\Bigl\{ \alpha>0 : \sum_{k=1}^\infty \nu_\alpha(A,S_k^{(t)}) < \infty \Bigr\}
			= \inf\Bigl\{ \alpha>0 : \sum_{k=1}^\infty \nu_\alpha(A,V_k^{(t)}) < \infty \Bigr\}.\\
			\dimL A = \inf\Bigl\{ \alpha>0 : \lim_{k\to\infty} \nu_\alpha(A,S_k^{(t)}) = 0 \Bigr\}
			= \inf\Bigl\{ \alpha>0 : \lim_{k\to\infty} \nu_\alpha(A,V_k^{(t)}) = 0 \Bigr\}.
		\end{gather*}
		\noindent (ii) 	\begin{align*}
			\dimdP A&=\inf\bigg\{\alpha\ge 0:\lim_{k\to\infty}\max_{1\le r\le t^{k(1-\eps)}}\Big(\f{r}{t^k}\Big)^\alpha P_r(A\cap S_k^{(t)})=0\text{\ for each\ }\eps\in (0,1)\bigg\}\\
			&=\inf\bigg\{\alpha\ge 0:\lim_{k\to\infty}\max_{1\le r\le t^{k(1-\eps)}}\Big(\f{r}{t^k}\Big)^\alpha P_r(A\cap V_k^{(t)})=0\text{\ for each\ }\eps\in (0,1)\bigg\}.
		\end{align*}
		
		\noindent (iii) For every $t>1$, we have
		\begin{equation}\label{general lower index comparison}
			\dimdH A\le\delta_t(A)\le\dimUM A.
		\end{equation}
	\end{proposition}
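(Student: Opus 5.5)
The plan is to compare the $t$-adic windows with the dyadic ones through a bounded-overlap argument. The scale change costs only multiplicative constants and bounded index shifts, so it does not affect the dimensions.

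\emph{Step 1: comparing annuli in different bases.} Fix $t>1$ and let $A_j=A\cap S_j^{(2)}$. Since $S_k^{(t)}\subseteq V_k^{(t)}$ and $V_k^{(t)}$ is the disjoint union of the annuli $S_j^{(t)}$ with $j\le k$, each $t$-annulus $S_k^{(t)}$ meets only the dyadic annuli $S_j^{(2)}$ with $2^j\asymp t^k$. There are at most $M=M(t)$ such indices $j$, and the same holds with the roles of $2$ and $t$ exchanged. The diameters are comparable, $d(S_k^{(t)})\asymp d(S_j^{(2)})$, by \eqref{window lattice diameter}.

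\emph{Step 2: the Hausdorff dimensions.} Using subadditivity of $\mathscr H_\alpha$ and $\nu_\alpha(A,F)=d(F)^{-\alpha}\mathscr H_\alpha(A\cap F)$, we get
\[
\nu_\alpha(A,S_k^{(t)})\le C\sum_{j:\,2^j\asymp t^k}\nu_\alpha(A,S_j^{(2)}),
\]
and symmetrically. Summing over $k$, each $j$ appears boundedly often, so summability is equivalent in the two bases. Convergence to $0$ is also preserved, which gives the $S$-versions in (i).

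For the $V$-versions, monotonicity gives $\nu_\alpha(A,S_k)\le\nu_\alpha(A,V_k)$, which proves one inequality between the infima. Conversely, fix $\alpha$ strictly larger than the $S$-infimum and choose $\alpha'$ with the $S$-infimum $<\alpha'<\alpha$. Then
\[
\nu_\alpha(A,V_k^{(t)})\le\sum_{j\le k}\Big(\tfrac{t^j}{t^k}\Big)^{\alpha}\nu_\alpha(A,S_j^{(t)}).
\]
Since $\nu_\alpha(A,S_j)\le\nu_{\alpha'}(A,S_j)$, each term is bounded by $t^{-(k-j)\alpha}\nu_{\alpha'}(A,S_j)$. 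This is a convolution of a summable (respectively null) sequence with the geometric kernel $t^{-(k-j)\alpha}$. Such a convolution is again summable (respectively tends to $0$), by splitting into $j\le k/2$ and $j>k/2$.

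\emph{Step 3: the packing dimension.} We use the same bounded-overlap comparison for $P_r$. Packing numbers are subadditive up to constants, since $P_r\asymp N_r$. The scale range $r\le t^{k(1-\varepsilon)}$ corresponds to $r\le 2^{j(1-\varepsilon')}$ with $\varepsilon'$ slightly different from $\varepsilon$; the condition is quantified over all $\varepsilon$, so this is harmless. The $V$-version follows as in Step 2: $P_r(A\cap V_k)\lesssim\sum_{j\le k}P_r(A\cap S_j)$.

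The terms with $t^j<r$ contribute $O(1)$ in total. Indeed, $A\cap V_{j_0}$ fits into a bounded number of cubes of side $r$ when $t^{j_0}\lesssim r$, and the factor $(r/t^k)^\alpha$ kills this contribution.

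For the remaining $j$ with $r\le t^j\le t^{k}$, there are two cases. When $r\le t^{j(1-\varepsilon/2)}$, we use the hypothesis at level $j$ with a slightly smaller exponent $\alpha'<\alpha$, together with the weight $(t^j/t^k)^{\alpha}$. When $t^{j(1-\varepsilon/2)}<r\le t^j$, we use the trivial bound $P_r(A\cap S_j)\lesssim (t^j/r)^2\le t^{j\varepsilon}$ together with $t^j\le t^{k}$ and $r\le t^{k(1-\varepsilon)}$. This second case is the main obstacle, since it requires tracking the exponents carefully. I expect it to work only for the indices $j$ close to $k$; the remaining indices are controlled by the geometric decay of the weight $(t^j/t^k)^{\alpha}$.

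\emph{Step 4: part (iii).} For $\delta_t(A)\le\dimUM A$, take $r=1$ in the minimum. Then $P_1(A\cap S_k^{(t)})\le\#(A\cap V_k^{(t)})\le t^{k(\dimUM A+o(1))}$, so any $\alpha>\dimUM A$ gives the limit $0$.

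For $\dimdH A\le\delta_t(A)$, take $\alpha>\beta>\delta_t(A)$. For large $k$ there is an $r_k\le t^{k(1-\varepsilon)}$ with $(r_k/t^k)^\beta P_{r_k}(A\cap S_k)\to0$. Covering $A\cap S_k$ by $\lesssim P_{r_k}$ cubes of side $r_k$ gives
\[
\nu_\alpha(A,S_k)\lesssim (r_k/t^k)^{\alpha-\beta}\cdot o(1)\le t^{-k\varepsilon(\alpha-\beta)},
\]
which is summable in $k$. Hence $\dimdH A\le\alpha$, and letting $\alpha\downarrow\delta_t(A)$ gives the claim.
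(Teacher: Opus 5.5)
Your Steps 1, 2 and 4 are sound. Comparing annuli directly across bases by bounded overlap works for $\nu_\alpha$, and after shrinking $\varepsilon$ slightly it also works for the packing condition; the paper instead nests windows, $V_k^{(t)}\subseteq V^{(b)}_{\lceil k\log_b t\rceil}$.

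The gap is in Step 3, in the direction $P_r(A\cap V_k^{(t)})\lesssim\sum_{j\le k}P_r(A\cap S_j^{(t)})$, in your second case $t^{j(1-\varepsilon/2)}<r\le\min\{t^j,t^{k(1-\varepsilon)}\}$. You flagged this case as the main obstacle, but your proposed resolution does not work. Such $j$ satisfy $j<ck$ with $c=\frac{2(1-\varepsilon)}{2-\varepsilon}$, so $k-j>\frac{\varepsilon}{2-\varepsilon}k$. These indices are therefore far from $k$, not close to it. The trivial bound $P_r(A\cap S_j^{(t)})\lesssim (t^j/r)^2<t^{\varepsilon j}$, combined with $(r/t^k)^\alpha\le t^{-\alpha\varepsilon k}$, gives only $t^{\varepsilon k(c-\alpha)}$ when $j$ is near $ck$ and $r$ is near $t^{k(1-\varepsilon)}$. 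Using the weight $(t^j/t^k)^\alpha$ instead gives an even worse bound. So your estimate does not tend to $0$ for any $\alpha\le\frac{2(1-\varepsilon)}{2-\varepsilon}$, which for small $\varepsilon$ is essentially every $\alpha<1$. The cause is that the threshold $\varepsilon/2$ does not depend on $\alpha$. The loss $t^{\varepsilon j}$ from the trivial bound is then of the same order as the gain $t^{-\alpha\varepsilon k}$.

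The missing idea is to let the splitting threshold depend on $\alpha$. This is legitimate because the annulus hypothesis holds for \emph{every} $\theta\in(0,1)$. Take $\theta$ with $2\theta<\alpha\varepsilon$ and split at $r\le t^{j(1-\theta)}$. The first range is handled by the hypothesis and the geometric convolution, exactly as you propose. In the range $t^{j(1-\theta)}<r\le t^j$, the trivial bound now gives $P_r(A\cap S_j^{(t)})\lesssim t^{2\theta j}\le t^{2\theta k}$. These at most $k$ annuli therefore contribute $\lesssim k\,t^{-(\alpha\varepsilon-2\theta)k}\to0$.

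The paper does precisely this. It chooses $\frac{2\theta}{1-\theta}<\frac{\alpha\varepsilon}{2}$ and lumps all annuli below $J(r)=\max\{1,\lceil\log_t r/(1-\theta)\rceil\}$ into the single inner window $V^{(t)}_{J(r)}$. That window has $r$-packing number $\lesssim r^{2\theta/(1-\theta)}$, which yields the bound $t^{-\alpha\varepsilon k/2}$.

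Alternatively, you could keep $\theta=\varepsilon/2$ and replace the trivial bound by monotonicity. Use $P_r\le P_{r'}$ with $r'=t^{j(1-\varepsilon/2)}$, together with the hypothesis at scale $r'$. This gives $P_r(A\cap S_j^{(t)})\lesssim t^{\alpha\varepsilon j/2}$, and hence a contribution $\lesssim k\,t^{-\alpha\varepsilon k/2}$.
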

	\begin{proof}
		(i) First fix $\alpha>0$. Since $V_k^{(t)}=\bigcup_{i=1}^k S_i^{(t)}$, after absorbing the finitely many
		initial terms into the constants, we have
		\begin{gather*}
			\nu_\alpha(A,S_k^{(t)})\le \nu_\alpha(A,V_k^{(t)})\yle_{t,\alpha}
			\sum_{i=1}^k t^{-\alpha(k-i)}\nu_\alpha(A,S_i^{(t)})
		\end{gather*}
		for all sufficiently large $k$. Therefore, summing the second inequality and reversing the order of summation, together with the first inequality,
		gives
		\[
		\sum_{k=1}^\infty \nu_\alpha(A,S_k^{(t)}) < \infty\quad\Longleftrightarrow\quad \sum_{k=1}^\infty \nu_\alpha(A,V_k^{(t)}) < \infty.
		\]
		
		Note that $\nu_\alpha(A,S_k^{(t)})\le 1$. Suppose that $\lim_{k\to\infty} \nu_\alpha(A,S_k^{(t)}) = 0$. For $K\in\mathbb N$ and all sufficiently large $k>K$, we have
		\[
		\nu_\alpha(A,V_k^{(t)})\yle_{t,\alpha}\sum_{i=1}^{K}t^{-\alpha(k-i)}+\sup_{\ell>K}\nu_\alpha(A,S_\ell^{(t)})\sum_{i=K+1}^{k}t^{-\alpha(k-i)}.
		\]
		Letting first $k\to\infty$ and then $K\to\infty$ yields
		\[
		\lim_{k\to\infty} \nu_\alpha(A,S_k^{(t)}) =0\quad\Longleftrightarrow\quad \lim_{k\to\infty} \nu_\alpha(A,V_k^{(t)}) =0.
		\]
		For bases $t,b>1$, put $i(k)=\lceil k\log_b t\rceil$.
		Then $V_k^{(t)}\subseteq V_{i(k)}^{(b)}$ and
		$d(V_{i(k)}^{(b)})\asymp_{t,b}d(V_k^{(t)})$.
		Consequently, for every $\alpha>0$,
		\[
		\nu_\alpha(A,V_k^{(t)})\yle_{t,b,\alpha}
		\nu_\alpha(A,V_{i(k)}^{(b)}).
		\]
		Since $i(k)\to\infty$ and $i(k)$ has bounded multiplicity, convergence
		to zero and summability in base $b$ imply the corresponding properties
		in base $t$. Interchanging $t$ and $b$ proves the converse.

		(ii)  Define
		\[
		\mathcal P_{\alpha,\eps}^{S,t}(k)
		=\max_{1\le r\le t^{k(1-\varepsilon)}}
		\left(\frac r{t^k}\right)^\alpha P_r(A\cap S_k^{(t)}),\quad
		\mathcal P_{\alpha,\eps}^{V,t}(k)
		=\max_{1\le r\le t^{k(1-\varepsilon)}}
		\left(\frac r{t^k}\right)^\alpha P_r(A\cap V_k^{(t)}).
		\]
		We first compare these two quantities at a fixed base $t$.
		The inequality
		$\mathcal P_{\alpha,\eps}^{S,t}(k)
		\le\mathcal P_{\alpha,\eps}^{V,t}(k)$ is obvious.
		For the converse, suppose
		$\mathcal P_{\alpha,\theta}^{S,t}(\ell)\to0\,(\ell\to\infty)$ for every
		$\theta\in(0,1)$, where $\alpha>0$ is fixed.
		Given $\varepsilon\in(0,1)$, choose $0<\theta<\varepsilon$ such
		that
		\[
		\frac{2\theta}{1-\theta}<\frac{\alpha\varepsilon}{2}.
		\]
		For $1\le r\le t^{k(1-\varepsilon)}$, put
		\[
		J(r)=\max\left\{1,
		\left\lceil\frac{\log_t r}{1-\theta}\right\rceil\right\}.
		\]
		For all sufficiently large $k$, uniformly over these $r$,
		\[
		J(r)\le\frac{1-\varepsilon}{1-\theta}k+1\le k,
		\qquad t^{J(r)}\le t r^{\f1{1-\theta}}.
		\]
		All packing squares of side $r$ centered in $V_{J(r)}^{(t)}$ lie
		in a square of side $t^{J(r)}+r$. Comparing their areas gives
		\[
		P_r(A\cap V_{J(r)}^{(t)})
		\le\left(1+\frac{t^{J(r)}}r\right)^2\le\left(1+t r^{\f\theta{1-\theta}}\right)^2 \yle_t r^{\f{2\theta}{1-\theta}}.
		\]
		The contribution of this inner window consequently satisfies
		\begin{equation}\label{packing inner window}
			\max_{1\le r\le t^{k(1-\varepsilon)}}\left(\frac r{t^k}\right)^\alpha
			P_r(A\cap V_{J(r)}^{(t)})
			\yle_t t^{-\alpha\varepsilon k+\f{2\theta k}{1-\theta}}
			\le t^{-\f{\alpha\eps k}2}.
		\end{equation}
		For each $\ell>J(r)$,  we have
		$r\le t^{\ell(1-\theta)}$. Subadditivity of packing numbers and
		\[
		V_k^{(t)}=V_{J(r)}^{(t)}\cup
		\bigcup_{\ell=J(r)+1}^k S_\ell^{(t)}
		\]
		therefore give
		\[
		\mathcal P_{\alpha,\varepsilon}^{V,t}(k)
		\yle_t  t^{-\f{\alpha\eps k}2}+\sum_{\ell=1}^k t^{-\alpha(k-\ell)}
		\mathcal P_{\alpha,\theta}^{S,t}(\ell) .
		\]
		
		Since $
		\mathcal P_{\alpha,\theta}^{S,t}(\ell)\longrightarrow0,$
		the sequence $\bigl(\mathcal P_{\alpha,\theta}^{S,t}(\ell)\bigr)_{\ell\ge1}$
		is bounded. Similar to (i), for $K\in\mathbb N$ and all sufficiently large $k>K$, it follows that
		\[
		\mathcal P_{\alpha,\eps}^{V,t}(k)\yle_t t^{-\f{\alpha\eps k}2}+\sum_{\ell=1}^{K}t^{-\alpha(k-\ell)}
		\mathcal P_{\alpha,\theta}^{S,t}(\ell)+\sum_{\ell=K+1}^{k}t^{-\alpha(k-\ell)}
		\mathcal P_{\alpha,\theta}^{S,t}(\ell)
		\]
		Let first $k\to\infty$ and then $K\to\infty$.
		This proves 
		\[
		\lim_{k\to\infty}\mathcal P_{\alpha,\eps}^{S,t}(k)=0\text{\ for each\ }\eps\in (0,1)\quad\Longleftrightarrow\quad\lim_{k\to\infty}\mathcal P_{\alpha,\eps}^{V,t}(k)=0\text{\ for each\ }\eps\in (0,1).
		\]

		Now we compare bases $t,b>1$. Set $j(k)=\lceil k\log_b t\rceil$.
		Since
		$V_k^{(t)}\subseteq V_{j(k)}^{(b)}$ and
		$t^{k(1-\varepsilon)}\le b^{j(k)(1-\varepsilon)}$, we have
		\[
		\mathcal P_{\alpha,\eps}^{V,t}(k)
		\le b^\alpha\mathcal P_{\alpha,\varepsilon}^{V,b}(j(k)).
		\]
		Since $j(k)\to\infty$, interchanging $t$ and $b$ proves base
		independence.

		(iii)
		We first prove that $\delta_t(A)\le \dim_{\mathrm{UM}}A$.
		Fix $\alpha>\dim_{\mathrm{UM}}A$ and $\varepsilon\in(0,1)$.
		Since $r=1$ is admissible in the minimum defining $\delta_t(A)$,
		\[
		0\le
		\min_{1\le r\le t^{j(1-\varepsilon)}}
		\left(\frac{r}{t^j}\right)^\alpha
		P_r(A\cap S_j^{(t)})
		\le
		t^{-\alpha j}P_1(A\cap S_j^{(t)})
		\le
		t^{-\alpha j}\#(A\cap V_j^{(t)}).
		\]
		The last expression tends to zero by the definition of upper
		mass dimension. Thus $\delta_t(A)\le\alpha$, and letting
		$\alpha\downarrow\dim_{\mathrm{UM}}A$ proves the upper inequality.
		
		For the lower inequality, let $\alpha\ge0$ be such that, for
		some $\varepsilon\in(0,1)$,
		\[
		M_j:=
		\min_{1\le r\le t^{j(1-\varepsilon)}}
		\left(\frac{r}{t^j}\right)^\alpha
		P_r(A\cap S_j^{(t)})
		\longrightarrow0.
		\]
		As agreed, the minimum may be taken over integer scales.
		For each $j$ such that $A\cap S_j^{(t)}\ne\varnothing$, choose
		an integer $r_j$ attaining this minimum. Then
		\[
		\left(\frac{r_j}{t^j}\right)^\alpha
		P_{r_j}(A\cap S_j^{(t)})=M_j,
		\qquad
		\frac{r_j}{t^j}\le t^{-\varepsilon j}.
		\]
		
		Fix $\beta>\alpha$. By the covering--packing comparison,
		$A\cap S_j^{(t)}$ can be covered by at most
		$C P_{r_j}(A\cap S_j^{(t)})$ squares of side $r_j$, where
		$C$ is a constant. Each covering square can be
		enlarged to a lattice cube of side at most $r_j+2\le3r_j$.
		Since $d(S_j^{(t)})\asymp_t t^j$ for all sufficiently large
		$j$, this cover gives
		\[
		\nu_\beta(A,S_j^{(t)})\yle_{t,\beta}
		\left(\frac{r_j}{t^j}\right)^\beta
		P_{r_j}(A\cap S_j^{(t)})=
		\left(\frac{r_j}{t^j}\right)^{\beta-\alpha}M_j\le 
		t^{-\varepsilon j(\beta-\alpha)}M_j.
		\]
		The sequence $(M_j)_{j\ge 1}$ is bounded. Hence the right-hand side
		is summable in $j$. Annuli not meeting $A$ have zero cost,
		and the finitely many remaining initial terms have finite
		cost. Therefore
		\[
		\sum_{j=1}^{\infty}\nu_\beta(A,S_j^{(t)})<\infty.
		\]
		Part~\textup{(i)} now implies $\dim_{\mathrm{DH}}A\le\beta$.
		Letting $\beta\downarrow\alpha$ and then taking the infimum
		over all admissible $\alpha$ yields
		$\dim_{\mathrm{DH}}A\le\delta_t(A)$, completing the proof.
	\end{proof}
	
	For comparison, define the centered-window variant by
	\begin{equation}\label{window lower index definition}
		\delta_t^{V}(A)=\inf\biggl\{\alpha\ge0:\ 
		\lim_{k\to\infty}\min_{1\le r\le t^{k(1-\varepsilon)}}
		\left(\frac r{t^k}\right)^\alpha P_r(A\cap V_k^{(t)})=0
		\text{\, for some }\varepsilon\in(0,1)\biggr\}.
	\end{equation}
	Unlike the discrete Hausdorff, lower discrete Hausdorff, and discrete
	packing dimensions, the lower entropy index may change either when the
	base is changed or when annuli are replaced by windows, as the following
	two examples show.
	
	\begin{example}\label{lower entropy radix counterexample}
		For $k\ge2$, define
		\begin{gather*}
			F_k=\Big\{\Big(2^{4k-3}+\ell,0\Big):\ell\in\Z\cap[0,4^k)\Big\},\qquad
			G_k=\Big\{\Big(2^{4k-2}+ 2^{3k-3}\ell,0\Big):\ell\in\Z\cap[0,2^k)\Big\},\\
			A=\bigcup_{k\ge2}(F_k\cup G_k).
		\end{gather*}
		Then we have 
		\[
		\delta_2(A)=\frac14<\frac13=\delta_4(A).
		\]
	\end{example}
	\begin{proof}
		These sets satisfy
		\begin{gather*}
			F_k\subset\Big[2^{4k-3},2^{4k-2}\Big)\times\{0\}\subset S_{4k-1}^{(2)},\qquad
			G_k\subset\Big[2^{4k-2},2^{4k-1}\Big)\times\{0\}\subset S_{4k}^{(2)},\\
			F_k\cup G_k\subset\Big[2^{4k-3},2^{4k-1}\Big)\times\{0\}\subset S_{2k}^{(4)}.
		\end{gather*}
		Therefore, for all sufficiently large \(\ell\),
		\[
		A\cap S_\ell^{(2)}=\begin{cases}
			G_k,\quad & \ell=4k,\\
			F_k, & \ell=4k-1,\\
			\emptyset, &\ell=1,2\mod 4,
		\end{cases}\qquad A\cap S_\ell^{(4)}=\begin{cases}
			F_k\cup G_k,\quad & \ell=2k,\\
			\emptyset, &\ell\text{\, is odd}.
		\end{cases}
		\]
		For $1\le r\le 16^k$, interval packing gives, with absolute constants,
		\begin{equation}\label{counterexample packing counts}
			P_r(F_k)\asy\f{4^k}r+1,\qquad
			P_r(G_k)\asy\min\Big\{2^k,\f{2^{4k}}{8r}\Big\}.
		\end{equation}
		Also $P_r(F_k\cup G_k)$ is comparable to the maximum of these two
		quantities, by monotonicity and subadditivity.
		
		We first calculate $\delta_2(A)$. Fix $\varepsilon=1/4$. Since $4^k<2^{(4k-1)(1-\eps)}$ for sufficiently large $k$, it follows that for every $\alpha>0$,
		\begin{gather*}
			\left(\frac{4^k}{2^{4k-1}}\right)^\alpha P_{4^k}(A\cap S_{4k-1}^{(2)})=\left(\frac{4^k}{2^{4k-1}}\right)^\alpha P_{4^k}(F_k)
			\yle_\alpha 2\cdot 4^{-\alpha k},\\
			\left(\frac1{2^{4k}}\right)^\alpha P_{1}(A\cap S_{4k}^{(2)})=\left(\frac1{2^{4k}}\right)^\alpha P_{1}(G_k)\yle2^{k(1-4\alpha)}.
		\end{gather*}
		This gives $\delta_2(A)\le1/4$.
		Conversely, if $0\le\alpha<1/4$ and $\varepsilon\in(0,1)$ is arbitrary,
		\eqref{counterexample packing counts} gives, for
		$1\le r\le 2^{4k(1-\varepsilon)}$,
		\begin{align*}
			\left(\frac r{2^{4k}}\right)^\alpha P_r(A\cap S_{4k}^{(2)})&=\left(\frac r{2^{4k}}\right)^\alpha P_r(G_k)\\
			&\asy\begin{cases}
				r^\alpha2^{k(1-4\alpha)},\quad &1\le r\le 2^{3k-3},\\
				\f18 \big(\f{2^{4k}}r\big)^{1-\alpha},& 2^{3k-3}<r\le 2^{4k(1-\varepsilon)},
			\end{cases}\\
			&\yge\min\Big\{2^{k(1-4\alpha)},2^{4k\eps(1-\alpha)}\Big\}
			\longrightarrow\infty.
		\end{align*}
		Thus $\delta_2(A)=1/4$.
		
		For base $4$, take $\varepsilon=1/2$. Since $2^k< 4^{2k(1-\eps)}$,
		\eqref{counterexample packing counts} yields that 
		\begin{align*}
			\left(\frac {2^k}{4^{2k}}\right)^\alpha P_{2^k}(A\cap S_{2k}^{(4)})&=\left(\frac {2^k}{4^{2k}}\right)^\alpha P_{2^k}(F_k\cup G_k)\\
			&\le \left(\frac {2^k}{4^{2k}}\right)^\alpha \Big(P_{2^k}(F_k)+P_{2^k} (G_k)\Big)
			\yle_\alpha 2\cdot 2^{k(1-3\alpha)}.
		\end{align*}
		Hence $\de_4(A)\le \f 13$.
		For the reverse inequality, fix $0\le\alpha<1/3$ and an arbitrary
		$\varepsilon\in(0,1)$. We obtain
		\begin{align*}
			\left(\frac {r}{4^{2k}}\right)^\alpha P_r(A\cap S_{2k}^{(4)})&=\left(\frac {r}{4^{2k}}\right)^\alpha P_r(F_k\cup G_k)\\
			&\ge\begin{cases}
				(\frac {r}{4^{2k}})^\alpha P_r(F_k)\yge 2^{k(1-3\alpha)},\quad &1\le r\le 2^k,\\
				(\frac {r}{4^{2k}})^\alpha P_r(G_k)\yge 2^{k(1-3\alpha)}, &2^k< r\le 2^{3k-3},\\
				(\frac {r}{4^{2k}})^\alpha P_r(G_k)\yge 2^{4k\eps(1-\alpha)}, &2^{3k-3}< r\le 2^{4k(1-\eps)},
			\end{cases}\\
			&\longrightarrow\infty.
		\end{align*}
		Therefore $\de_4(A)=\f13$.
	\end{proof}
	
	\begin{example}\label{lower entropy window counterexample}
		For the set $A$ constructed in
		Example \ref{lower entropy radix counterexample},
		\[
		\delta_2(A)=\frac14<\frac13=\delta_2^V(A).
		\]
	\end{example}
	\begin{proof}
		The first equality has already been proved. To prove
		$\delta_2^V(A)\ge1/3$, note that
		$F_k\cup G_k\subset A\cap V_{4k}^{(2)}$.
		For $0\le\alpha<1/3$ and every $\varepsilon\in(0,1)$, the
		three scale estimates in the preceding example give
		\[
		\begin{split}
			\min_{1\le r\le 2^{4k(1-\varepsilon)}}
			\left(\frac r{4^{2k}}\right)^\alpha P_r(A\cap V_{4k}^{(2)})
			\ge
			\min\big\{2^{k(1-3\alpha)},2^{4k\eps(1-\alpha)}\big\}
			\longrightarrow\infty.
		\end{split}
		\]
		Hence no $\varepsilon$ gives the convergence in
		\eqref{window lower index definition} at such an $\alpha$.
		
		For the upper bound, let $\alpha>1/3$. Recall that $F_k\cup G_k\subset\big[2^{4k-3},2^{4k-1}\big)\times\{0\}$.
		If $F_k$ or $G_k$ meets
		$V_\ell^{(2)}$, then $2^{4k-3}<2^{\ell-1},$ i.e., $4k<\ell+2$.
		Subadditivity and \eqref{counterexample packing counts} then yield
		\begin{align*}
			P_r(A\cap V_\ell^{(2)})&\le\sum_{\substack{k\ge2\\4k<\ell+2}}P_r(F_k\cup G_k)\\
			&\yle\sum_{\substack{k\ge2\\4k<\ell+2}}
			\left(1+\frac{4^k}r+2^k\right)
			\yle \ell+\frac{2^{\f\ell2}}r+2^{\f\ell4}.
		\end{align*}
		Choose $\varepsilon=1/2$. Since $2^{\f\ell4}<2^{\ell(1-\eps)}$,
		\[
		\min_{1\le r\le2^{\ell(1-\eps)}}
		\left(\frac r{2^\ell}\right)^\alpha P_r(A\cap V_\ell^{(2)})
		\le\left(\frac{2^{\f\ell4}}{2^\ell}\right)^\alpha
		P_{2^{\f\ell4}}(A\cap V_\ell^{(2)})
		\yle2^{-\f{3\alpha}4\ell}\big(\ell+2^{\f\ell4+1}\big)\longrightarrow0.
		\]
		This proves $\delta_2^V(A)\le1/3$ and completes the calculation.
	\end{proof}

	\section[Finite symbolic models and orbit localization]{Finite symbolic models and orbit localization}\label{section model}
	
	Throughout this section, $m>n$, $z=(u,v)\in\Z^2$. Put $\mI=\{i:N_i>0\}$,
	$\tau=\log m/\log n>1$, and $q_k=q(m^k)=\lfloor\tau k\rfloor$.
	For $k\geq1$, we identify each element of
	\[
	\La^k\times\{0,\ldots,n-1\}^{q_k-k}
	\]
	with a pair of words $
	(i_0\ldots i_{k-1},j_0\ldots j_{q_k-1}),$ 
	where $(i_\ell,j_\ell)\in\La$ for $0\leq\ell\leq k-1$ and
	$j_\ell\in\{0,\ldots,n-1\}$ for $k\leq\ell\leq q_k-1$. Define $
	\Psi_k:\La^k\times\{0,\ldots,n-1\}^{q_k-k}\longrightarrow\Z^2$
	by
	\[
	\Psi_k(i_0\ldots i_{k-1},j_0\ldots j_{q_k-1})=\left(\sum_{\ell=0}^{k-1}i_\ell m^\ell,\sum_{\ell=0}^{q_k-1}j_\ell n^\ell\right).
	\]
	Finite positional expansions make $\Psi_k$ injective.
	Let
	$\mbB=(\mB_\ell)_{\ell=k}^{q_k-1}$ be a sequence of nonempty
	subsets of $\{0,\ldots,n-1\}$, and define
	$\Omega_k[\mbB]=\La^k\times\prod_{\ell=k}^{q_k-1}\mB_\ell.$
	The corresponding finite
	model is the planar set
	\begin{equation}\label{varying tail model}
		\mE_k[\mbB]=\Psi_k(\Omega_k[\mbB]).
	\end{equation}
	Since
	$n^{q_k}\le m^k$, we have
	\begin{equation}\label{varying model cardinality}
		\mE_k[\mbB]\subset ([0,m^k)\times[0,n^{q_k}))\cap\Z^2\subseteq C(0,m^k),\qquad
		\#\mE_k[\mbB]=(\#\La)^k
		\prod_{\ell=k}^{q_k-1}\#\mB_\ell.
	\end{equation}
	Here and throughout, we call the pairs $(i_\ell,j_\ell)$,
	$0\leq\ell<k$, the \emph{mixed digits}, with $i_\ell$ and $j_\ell$
	being their horizontal and vertical components, respectively. The digits
	$j_\ell$, $k\leq\ell<q_k$, are called the \emph{tail digits}.
	Furthermore, we define the constant tail alphabet
	\[
	\mB_u=\begin{cases}
		\col_0,&u=0\text{ and }N_0\ge2,\\
		\col_{m-1},&u=-1\text{ and }N_{m-1}\ge2,\\
		\{0\},&\eta(u)=1.
	\end{cases}
	\]
	and denote $(\{0\})_{\ell=k}^{q_k-1}$ by $\mathbf{0}$.
	In particular, let $\mbB^u=(\mB_u)_{\ell=k}^{q_k-1}$ and retain the notation
	\[
	\Omega_k(u)=\Omega_k[\mbB^u]
	=\La^k\times\mB_u^{q_k-k},\qquad
	\mE_{k,q_k}(u)=\mE_k[\mbB^u].
	\]
	For a measure $\mu$ on $\Omega_k[\mbB]$, its pushforward to
	$\Z^2$ is denoted by $(\Psi_k)_*\mu$ and is supported on
	$\mE_k[\mbB]$. Taking $\mbB=\mbB^u$ in
	\eqref{varying model cardinality} gives
	\[
	\mE_{k,q_k}(u)\subset C(0,m^k),\qquad
	\#\mE_{k,q_k}(u)=(\#\La)^k\eta(u)^{q_k-k}.
	\]
	
	\subsection{Approximate squares and their counts}
	
	For integers $0\le s\le k$, put
	\[
	r_s=m^{k-s},\qquad t_s=\max\{0,k-q(r_s)\},\qquad
	e_s=q_k-\max\{k,q(r_s)\}.
	\]
	These numbers satisfy
	\begin{equation}\label{levelsforapproximatesquare}
		0\le t_s\le s,\qquad e_s\ge0,\qquad
		t_s+e_s=q_k-q(r_s)=\tau s+O(1),
	\end{equation}
	where the last error has absolute value less than $1$.
	For a fixed model $\mE_k[\mbB]$, a depth-$s$ approximate
	square is the image under $\Psi_k$ of a nonempty subset of
	$\Omega_k[\mbB]$ obtained by fixing the mixed horizontal
	digits $i_\ell$ for $k-s\le\ell<k$, the mixed vertical digits
	$j_\ell$ for $k-t_s\le\ell<k$, and the tail digits $j_\ell$ for
	$q_k-e_s\le\ell<q_k$. More precisely, for $\omega\in\Omega_k[\mbB]$, define
	\[
	Q_s(\omega)
	=
	\Psi_k\left(
	\left\{
	\omega'\in\Omega_k[\mbB]:
	\begin{array}{ll}
		i'_\ell=i_\ell, & k-s\le\ell<k,\\
		j'_\ell=j_\ell, & q(r_s)\le\ell<q_k
	\end{array}
	\right\}
	\right),
	\]
	where $i_\ell,j_\ell$ and $i'_\ell,j'_\ell$ denote the digits
	of $\omega$ and $\omega'$, respectively.
	The distinct sets $Q_s(\omega)$ form a partition
	\[
	\mQ_s(k;\mbB)
	=
	\{Q_s(\omega):\omega\in\Omega_k[\mbB]\}
	\]
	of $\mE_k[\mbB]$. In particular, put
	$\mQ_s(k,u)=\mQ_s(k;\mbB^u)$.
	Both prescribed digit ranges increase with depth, so
	$Q_t(\omega)\subseteq Q_s(\omega)$ for $0\le s\le t\le k$;
	in particular, these partitions are nested.
	The depth-$0$ partition consists of
	$\mE_k[\mbB]$ alone, and the depth-$k$ partition consists of
	singletons. 
	
	\begin{lemma}\label{approximate square geometry}
		For every choice of $\mbB$, every depth-$s$ approximate
		square is contained in a lattice cube of
		side $r_s$. Every cube of side at most $r_s$ meets at most $2(n+2)$
		depth-$s$ approximate squares. Consequently,
		\begin{equation}\label{varying model partition count}
			\begin{split}
				N_{r_s}(\mE_k[\mbB])
				\asy P_{r_s}(\mE_k[\mbB])
				\asy\#\mQ_s(k;\mbB)=N_\La^{s-t_s}(\#\La)^{t_s}
				\prod_{\ell=q_k-e_s}^{q_k-1}\#\mB_\ell.
			\end{split}
		\end{equation}
		In particular,
		\begin{equation}\label{model partition count}
			N_{r_s}(\mE_{k,q_k}(u))\asy_{n}P_{r_s}(\mE_{k,q_k}(u))
			\asy_{n}\#\mQ_s(k,u)
			=N_\La^{s-t_s}(\#\La)^{t_s}\eta(u)^{e_s}.
		\end{equation}
	\end{lemma}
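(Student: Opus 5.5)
Fix $\omega\in\Omega_k[\mbB]$ and $0\le s\le k$; we first locate $Q_s(\omega)$. In the horizontal coordinate the free digits are $i_\ell$ with $\ell<k-s$, so the $x$-coordinates of $Q_s(\omega)$ are $c+\sum_{\ell<k-s}i'_\ell m^\ell$ with $c$ fixed. They therefore lie in an interval $[c,c+m^{k-s})$ of length $r_s$. In the vertical coordinate the free digits are $j_\ell$ with $\ell<q(r_s)$. Hence the $y$-coordinates lie in $[c',c'+n^{q(r_s)})$, and $n^{q(r_s)}\le r_s$. So $Q_s(\omega)\subseteq C((c,c'),r_s)$, which gives the first claim.

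For the bounded-overlap claim, note that the fixed digits determine the pair $(c,c')$. Here $c\in m^{k-s}\Z$ and $c'\in n^{q(r_s)}\Z$, and the map from approximate squares to $(c,c')$ is injective by uniqueness of positional expansions: $\Psi_k$ is injective, and the fixed digit blocks are read off from $c$ and $c'$. Moreover $Q_s(\omega)$ is contained in the box $[c,c+r_s)\times[c',c'+n^{q(r_s)})$. These boxes are disjoint for distinct squares, because they are cells of the grid $r_s\Z\times n^{q(r_s)}\Z$.

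Take a cube of side at most $r_s$. It meets at most $2$ grid columns of width $r_s$. It meets at most $\lceil r_s/n^{q(r_s)}\rceil+1\le n+2$ grid rows, since $r_s<n^{q(r_s)+1}$. This gives at most $2(n+2)$ squares. (For $s=0$ or $s=k$ the statement is trivial, and the same argument applies.)

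The comparisons now follow. Covering each approximate square by its cube of side $r_s$ gives $N_{r_s}\le\#\mQ_s$. For the reverse bound, every cube in a cover meets at most $2(n+2)$ approximate squares, so $\#\mQ_s\le 2(n+2)N_{r_s}$. Finally $N_{r}\asy P_{r}$ was recorded in the introduction.

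It remains to count $\#\mQ_s(k;\mbB)$. This is the number of admissible values of the fixed digit data. We split the positions $k-s\le\ell<k$ into two kinds.

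1. For $k-t_s\le\ell<k$, both $i_\ell$ and $j_\ell$ are fixed, and the pair ranges over $\La$. There are $t_s$ such levels, because $q(r_s)\ge k$ exactly when $t_s=0$, and otherwise $q(r_s)=k-t_s$.
2. For $k-s\le\ell<k-t_s$, only $i_\ell$ is fixed, and it ranges over $\mI$, which has $N_\La$ elements. The free $j_\ell$ can always be completed within the column $\col_{i_\ell}\neq\emptyset$.

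The tail positions contribute $\prod_{\ell=q_k-e_s}^{q_k-1}\#\mB_\ell$, since $e_s$ counts exactly the tail indices $\ge\max\{k,q(r_s)\}$. Independence across levels gives the product formula, and specializing to $\mbB=\mbB^u$ gives $\eta(u)^{e_s}$. The only delicate point is the row-count bound $n+2$, which uses $n^{q(r_s)}>r_s/n$. This is where the constant depends on $n$.
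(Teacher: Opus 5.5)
Your proposal is correct and follows essentially the same route as the paper. Both arguments place each depth-$s$ approximate square in its own cell of the grid $r_s\Z\times n^{q(r_s)}\Z$; the paper reads the cell off from the quotients $\lfloor x/r_s\rfloor$ and $\lfloor y/n^{q(r_s)}\rfloor$, where you use the fixed prefix sums $(c,c')$. Both then bound by $2(n+2)$ the number of cells a cube of side at most $r_s$ meets, and count the prescribed digit data level by level. One point you make explicit that the paper leaves implicit when it calls the choices independent: at levels $k-s\le\ell<k-t_s$, the free vertical digit can always be completed inside the nonempty column $\col_{i_\ell}$.
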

	
	\begin{proof}
		Fix $s\le k$ and $\mbB$. For $(u,v)\in\Z^2$, let
		\[
		R_{u,v}=[ur_s,(u+1)r_s)\times[vn^{q(r_s)},(v+1)n^{q(r_s)}).
		\]
		If $(x,y)=\Psi_k(i_0\ldots i_{k-1},j_0\ldots j_{q_k-1})$, then
		\[
		\left\lfloor\frac{x}{r_s}\right\rfloor=\sum_{\ell=k-s}^{k-1}i_\ell m^{\ell-k+s},
		\qquad
		\left\lfloor\frac{y}{n^{q(r_s)}}\right\rfloor=
		\sum_{\ell=q(r_s)}^{q_k-1}j_\ell n^{\ell-q(r_s)}.
		\]
		By uniqueness of finite positional expansions, two model
		points lie in the same rectangle $R_{u,v}$ if and only if
		their symbolic representatives agree on all the digits
		prescribed in the definition of $Q_s$. Consequently,
		\[
		\mQ_s(k;\mbB)=\big\{\mE_k[\mbB]\cap R_{u,v}:(u,v)\in\Z^2,\ \mE_k[\mbB]\cap R_{u,v}\ne\emptyset
		\big\}.
		\]
		Since $n^{q(r_s)}\le r_s$, each member of this partition
		satisfies
		\[
		\mE_k[\mbB]\cap R_{u,v}\subset C\bigl((ur_s,vn^{q(r_s)}),r_s\bigr).
		\]
		This proves the first geometric assertion and gives $
		N_{r_s}(\mE_k[\mbB])\le\#\mQ_s(k;\mbB).$
		
		Now let $U$ be an axis-parallel square of side at most $r_s$.
		An interval of length $L$ meets at most $\lceil L/h\rceil+1$
		members of the partition
		$\{[uh,(u+1)h):u\in\Z\}$.
		Applying this to the two coordinate projections of $U$
		shows that $U$ meets at most
		\[
		2\left(\left\lceil\frac{r_s}{n^{q(r_s)}}\right\rceil+1\right)\le 2(n+2)
		\]
		rectangles $R_{u,v}$.
		It therefore meets at most $2(n+2)$ members of
		$\mQ_s(k;\mbB)$. Every cover of $\mE_k[\mbB]$ must meet every nonempty
		member of this partition. Thus
		\[
		\frac{\#\mQ_s(k;\mbB)}{2(n+2)}
		\le
		N_{r_s}(\mE_k[\mbB])
		\le
		\#\mQ_s(k;\mbB).
		\]
		Together with $N_r\asy P_r$, this proves the required
		comparisons with covering and packing numbers.
		
		It remains to count the members of $\mQ_s(k;\mbB)$.
		The prescribed data consist of
		\[
		\begin{array}{lll}
			i_\ell,
			& k-s\le\ell<k-t_s,\quad
			& N_\La \text{ choices at each level},\\
			(i_\ell,j_\ell),\quad
			& k-t_s\le\ell<k,
			& \#\La \text{ choices at each level},\\
			j_\ell,
			& q_k-e_s\le\ell<q_k,
			& \#\mB_\ell \text{ choices at each level}.
		\end{array}
		\]
		These choices are independent between levels.
		Distinct choices give distinct members of the partition,
		by the injectivity of $\Psi_k$. Hence
		\[
		\#\mQ_s(k;\mbB)
		=
		N_\La^{s-t_s}(\#\La)^{t_s}
		\prod_{\ell=q_k-e_s}^{q_k-1}\#\mB_\ell.
		\]
		This proves \eqref{varying model partition count}.
		Taking $\mbB=\mbB^u$, so that
		$\#\mB_\ell=\eta(u)$ at every tail level, gives
		\eqref{model partition count}.
	\end{proof}

	\begin{lemma}\label{varying model counts}
		Uniformly over $k\ge1$, $1\le r\le m^k$, and all sequences
		$\mbB$ of nonempty tail alphabets,
		\begin{equation}\label{varying model all scales}
			\begin{split}
				N_r(\mE_k[\mbB])
				\asy P_r(\mE_k[\mbB])
				\asy_{m,n}
				N_\La^{\min\{q(r),k\}-p(r)}
				(\#\La)^{\max\{k-q(r),0\}}
				\prod_{\ell=\max\{k,q(r)\}}^{q_k-1}\#\mB_\ell.
			\end{split}
		\end{equation}
		If $\#\mB_a\le\Nm$ for every $a$, then
		\begin{equation}\label{model maximal fiber bound}
			N_r(\mE_k[\mbB])
			\yle_{m,n}
			N_\La^{k-p(r)}\Nm^{q_k-q(r)}.
		\end{equation}
		All constants are independent of the tail alphabets.
	\end{lemma}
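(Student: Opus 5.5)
The plan is to reduce the all-scales estimate \eqref{varying model all scales} to the discrete scales $r_s=m^{k-s}$ treated in Lemma~\ref{approximate square geometry}, using monotonicity of $N_r$ in $r$ and the doubling comparison $N_{r}\asy N_{r'}$ when $r\le r'\le mr$.

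Given $1\le r\le m^k$, choose $s$ with $r_s\le r<r_{s-1}$, i.e. $k-s=p(r)$. Then $N_{r_{s-1}}\le N_r\le N_{r_s}$, and $N_{r_s}\yle_m N_{r_{s-1}}$ because a square of side $mr_s$ splits into $m^2$ squares of side $r_s$. So it suffices to check that the right-hand side of \eqref{varying model all scales} agrees, up to constants depending only on $m,n$, with $\#\mQ_s(k;\mbB)=N_\La^{s-t_s}(\#\La)^{t_s}\prod_{\ell=q_k-e_s}^{q_k-1}\#\mB_\ell$. Since $r_s\le r<mr_s$, we have $q(r_s)\le q(r)\le q(r_s)+\lceil\tau\rceil$. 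Consequently $\max\{k-q(r),0\}$ differs from $t_s=\max\{k-q(r_s),0\}$ by at most $\lceil\tau\rceil$, and likewise $\min\{q(r),k\}-p(r)=s-\max\{k-q(r),0\}$ differs from $s-t_s$ by a bounded amount. The tail product in \eqref{varying model all scales} runs over $\ell\ge\max\{k,q(r)\}$, while $\#\mQ_s$ runs over $\ell\ge q_k-e_s=\max\{k,q(r_s)\}$. These two ranges differ in at most $\lceil\tau\rceil$ levels, each factor lying in $[1,n]$. Each discrepancy therefore costs a factor at most $\max\{m,n\}^{O(\tau)}$, which depends only on $m,n$ and not on $\mbB$. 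The packing comparison follows from $N_r\asy P_r$.

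For \eqref{model maximal fiber bound}, I would bound the right side of \eqref{varying model all scales} using $\#\La\le N_\La\Nm$ and $\#\mB_\ell\le\Nm$. Write $a=\max\{k-q(r),0\}$ and $b=\min\{q(r),k\}-p(r)$, so that $a+b=k-p(r)$. The count is then at most
\[
N_\La^{b}(N_\La\Nm)^{a}\Nm^{q_k-\max\{k,q(r)\}}=N_\La^{k-p(r)}\Nm^{a+q_k-\max\{k,q(r)\}}.
\]
It remains to observe that $a+q_k-\max\{k,q(r)\}=q_k-q(r)$ in both cases $q(r)\le k$ and $q(r)>k$.

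The main obstacle is only the bookkeeping in the first step, namely that the index shifts between $q(r)$ and $q(r_s)$ are uniformly bounded. Everything else is immediate from Lemma~\ref{approximate square geometry}.
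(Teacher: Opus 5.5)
Your proposal is correct and follows essentially the paper's proof. You reduce to the scale $r_s=m^{p(r)}$ via Lemma~\ref{approximate square geometry} and absorb the bounded shift $0\le q(r)-q(r_s)\le\lceil\tau\rceil$; the paper does this by telescoping the ratios of consecutive counts (each equal to $\#\La/N_\La$ or some $\#\mB_d$, hence in $[1,n]$), where you use direct Lipschitz bookkeeping of the exponents, and \eqref{model maximal fiber bound} is derived exactly as you do, via $\#\La\le N_\La\Nm$.
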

	\begin{proof}
		Put $s=k-p(r)$.
		Since $r_s=m^{p(r)}$, we have
		\[
		r_s\le r<mr_s,\qquad 0\le q(r)-q(r_s)\le\lceil\tau\rceil.
		\]
		Every square of side $r$ can be covered by at most $m^2$
		squares of side $r_s$. Consequently,
		\[
		N_r(\mE_k[\mbB])\le N_{r_s}(\mE_k[\mbB])
		\le m^2N_r(\mE_k[\mbB]).
		\]
		
		For integers $p(r)\le d\le q_k$, define
		\[
		M_d=N_\La^{\min\{d,k\}-p(r)}
		(\#\La)^{\max\{k-d,0\}}
		\prod_{\ell=\max\{k,d\}}^{q_k-1}\#\mB_\ell.
		\]
		The definitions of $s$, $t_s$, and $e_s$ give
		\[
		s-t_s=\min\{q(r_s),k\}-p(r),\qquad
		t_s=\max\{k-q(r_s),0\},\qquad
		e_s=q_k-\max\{k,q(r_s)\}.
		\]
		Thus \eqref{varying model partition count} yields $
		\#\mQ_s(k;\mbB)=M_{q(r_s)},$
		and the preceding scale comparison gives $N_r(\mE_k[\mbB])\asy_{m,n}M_{q(r_s)}.$
		We now compare $M_{q(r_s)}$ with $M_{q(r)}$. Direct cancellation
		shows that, for $p(r)\le d<q_k$,
		\[
		\frac{M_d}{M_{d+1}}=\begin{cases}
			\displaystyle\frac{\#\La}{N_\La},&d<k,\\[6pt]
			\#\mB_d,&d\ge k.
		\end{cases}
		\]
		Both expressions lie in $[1,n]$.
		Multiplying these ratios  gives
		\[
		M_{q(r)}\le M_{q(r_s)}\le n^{q(r)-q(r_s)}M_{q(r)}\le n^{\lceil\tau\rceil}M_{q(r)}.
		\]
		Therefore $N_r(\mE_k[\mbB])\asy_{m,n}M_{q(r)}.$
		Together with $N_r\asy P_r$, this proves
		\eqref{varying model all scales}.
		
		Finally, suppose that $\#\mB_\ell\le\Nm$ at every tail
		level. Since $\#\La\le N_\La\Nm$,
		we obtain
		\[
		M_{q(r)}\le N_\La^{\min\{q(r),k\}-p(r)}
		(N_\La\Nm)^{\max\{k-q(r),0\}}
		\Nm^{q_k-\max\{k,q(r)\}}=N_\La^{k-p(r)}\Nm^{q_k-q(r)}.
		\]
		Combining this with the estimate for $N_r(\mE_k[\mbB])$
		proves \eqref{model maximal fiber bound}.
		All comparison constants depend only on $m$ and $n$.
	\end{proof}

	For $1\le r\le h$, define the single expression
	\begin{align}
		\Phi(u,r,h)&=N_\La^{\min\{q(r),p(h)\}-p(r)}
		(\#\La)^{\max\{p(h)-q(r),0\}}
		\eta(u)^{q(h)-\max\{p(h),q(r)\}}\notag\\
		&=\begin{cases}
			N_\La^{q(r)-p(r)}(\#\La)^{p(h)-q(r)}\eta(u)^{q(h)-p(h)},\ & \text{if\ \, }p(r)\le q(r)\le p(h)\le q(h),\\
			N_\La^{p(h)-p(r)}\eta(u)^{q(h)-q(r)}, &\text{if\ \, }p(r)\le p(h)\le q(r)\le q(h).\label{Phi definition}
		\end{cases}
	\end{align}
	All exponents are nonnegative.
	Taking $\mbB=\mbB^u$ in
	Lemma \ref{varying model counts} gives
	\begin{equation}\label{model all scales}
		N_r(\mE_{k,q_k}(u))\asy_{m,n}
		P_r(\mE_{k,q_k}(u))\asy_{m,n}\Phi(u,r,m^k).
	\end{equation}
	A fixed multiplicative change in $h$ changes
	each exponent by a bounded amount, including when $q(r)=p(h)$ is
	crossed. In particular, whenever both sides are defined,
	\begin{equation}\label{Phi comparable scales}
		\Phi(u,r,ch)\asy_{m,n,\La,c}\Phi(u,r,h)
		\qquad(c>0\text{ fixed}).
	\end{equation}
	
	\begin{lemma}\label{model descendants}
		Fix a nonnegative integer $S$. There is $C_S>0$, independent of $k$
		and of $Q\in\mQ_S(k,u)$, such that
		\[
		P_r(Q)\ge C_S\Phi(u,r,m^k)
		\]
		for all $k\ge S$ and $1\le r\le m^{k-S}$.
	\end{lemma}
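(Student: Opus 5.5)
The plan is to identify each depth-$S$ approximate square $Q\in\mQ_S(k,u)$ with a model of the same type at a smaller level, translated and possibly with some digits frozen. Then Lemma~\ref{varying model counts} gives the packing count, and \eqref{Phi comparable scales} absorbs the bounded loss.

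Fix $\omega$ with $Q=Q_S(\omega)$. By definition, $Q$ is obtained by fixing the horizontal mixed digits $i_\ell$ for $k-S\le\ell<k$, the mixed vertical digits $j_\ell$ for $k-t_S\le\ell<k$, and the tail digits $j_\ell$ for $q_k-e_S\le\ell<q_k$. The free data are:
\begin{itemize}
\item the full mixed pairs at levels $0\le\ell<k-S$;
\item the vertical components $j_\ell$ at levels $k-S\le\ell<k-t_S$, which range over $\col_{i_\ell}$ with $i_\ell$ fixed;
\item the tail digits at levels $k\le\ell<q_k-e_S$, which range over $\mB_u$.
\end{itemize}

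Put $k'=k-S$. The points of $Q$ are then a translate (by the fixed digit contributions) of the set
\[
\Big\{\Big(\textstyle\sum_{\ell<k'}i_\ell m^\ell,\ \sum_{\ell<q(r_S)}j_\ell n^\ell\Big)\Big\},
\]
with the constraints just described. Restricting each free column $\col_{i_\ell}$ and each $\mB_u$ to a single element only decreases $P_r$. So $Q$ contains a translate of the product $\Psi_{k'}(\La^{k'}\times\{\text{fixed}\})$, where the vertical levels $k'\le\ell<q(r_S)$ carry one-element alphabets, except those tail levels where $\mB_u$ is kept.

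This is not quite of the form $\mE_{k'}[\mbB]$, because the range of vertical levels is $q(r_S)$ rather than $q_{k'}$. However, $q(r_S)=q(m^{k-S})$ differs from $q_{k'}=\lfloor\tau k'\rfloor$ by at most one. The cleanest route is therefore to keep the free digits and apply the positional argument of Lemma~\ref{approximate square geometry} directly to $Q$.

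The approximate squares of $Q$ at depth $s'$ (relative to $k'$) are $Q\cap R_{u,v}$ for the rectangles at scale $r=m^{k'-s'}$. Their number is a product, over levels, of the number of choices of the digits visible at that resolution. Comparing level by level with the count giving $\Phi(u,r,m^k)$ in \eqref{model all scales}, the only discrepancies are at the $S$ top horizontal levels and the $\tau S+O(1)$ top vertical levels. These are fixed in $Q$ but free in $\mE_{k,q_k}(u)$, and they are visible at scale $r$ only if $r<m^{k-S}$.

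At a horizontal level where the digit is visible, the free model contributes a factor $N_\La$ or $\#\La$, while $Q$ contributes a factor at least $1$. Likewise, each vertical visible level loses at most a factor $n$. Since at most $S+\tau S+1$ levels are affected, $\#(\text{partition of }Q\text{ at scale }r)\ge (mn)^{-(S+\tau S+1)}\Phi(u,r,m^k)$.

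The remaining work is the geometric comparison, $P_r(Q)\gtrsim_{n}\#(\text{partition})$. This follows exactly as in Lemma~\ref{approximate square geometry}: each square of side $\le r$ meets at most $2(n+2)$ rectangles, so $N_r(Q)\ge \#/(2(n+2))$, and then $P_r\asy N_r$. For $r$ not of the form $m^{p}$, the same reduction as in Lemma~\ref{varying model counts} applies, using \eqref{Phi comparable scales} and the fact that the ratios $M_d/M_{d+1}$ lie in $[1,n]$.

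The main obstacle will be the bookkeeping for the middle band of levels $k-S\le\ell<k-t_S$. There the horizontal digit is fixed but the vertical digit is free in $\col_{i_\ell}$, and for $u\notin\{0,-1\}$ the vertical levels above $k$ use $\{0\}$. One must check that the visible-level count for $Q$ never falls below $\Phi$ by more than a bounded factor in both regimes of \eqref{Phi definition}, including when $q(r)$ crosses $p(m^k)=k$. I would handle this by bounding the loss per fixed level by $\max\{\#\La,n\}$, which gives $C_S=(2(n+2)m^2)^{-1}\max\{\#\La,n\}^{-(S+\lceil\tau S\rceil+2)}$ up to the constants from \eqref{Phi comparable scales}.
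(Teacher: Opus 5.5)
Your proposal is correct and follows essentially the paper's argument. Set $s=k-p(r)$ and count the depth-$s$ approximate squares contained in $Q$; the paper computes this count exactly and shows its ratio to $\#\mQ_s(k,u)$ is at most $(\#\La)^S\eta(u)^{\tau S+1}$, whereas you bound the loss by a fixed factor per top level. Both then apply the $2(n+2)$-meeting bound, the comparison between $r$ and $r_s=m^{p(r)}$, and $\#\mQ_s(k,u)\gtrsim\Phi(u,r,m^k)$; your opening detour through a smaller-level model is unnecessary, and you rightly drop it.
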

	\begin{proof}
		Fix $Q\in\mQ_S(k,u)$ and $1\le r\le m^{k-S}$, and put $
		s=k-p(r).$
		Then $S\le s\le k$ and $r_s\le r<mr_s.$
		Let
		\[
		D_s(Q)
		=
		\#\{Q'\in\mQ_s(k,u):Q'\subseteq Q\}.
		\]
		By nestedness, the depth-$s$ approximate squares counted by
		$D_s(Q)$ form a partition of $Q$.
		Let $i_\ell$, $k-S\le\ell<k$, be the horizontal digits
		prescribed by $Q$, and put $
		d=\min\{S,t_s\}.$
		Since $s\ge S$, we have $t_s\ge t_S$ and $e_s\ge e_S$.
		In particular, $d\ge t_S$.
		For $Q'\in\mQ_s(k,u)$ with $Q'\subseteq Q$, choose
		$\omega'\in\Psi_k^{-1}(Q')$, and write $i'_\ell,j'_\ell$
		for its digits. The depth-$s$ digit data not already
		prescribed by $Q$ are exactly the following:
		\[
		\begin{array}{c|c|c}
			\text{levels }\ell& \text{remaining data}& \text{admissible values}\\ \hline
			k-s\le\ell<k-\max\{S,t_s\}
			& i'_\ell
			& \mI\\
			k-t_s\le\ell<k-d
			& (i'_\ell,j'_\ell)
			& \La\\
			k-d\le\ell<k-t_S
			& j'_\ell
			& \col_{i_\ell}\\
			q_k-e_s\le\ell<q_k-e_S
			& j'_\ell
			& \mB_u
		\end{array}
		\]
		Consequently,
		\[
		\begin{aligned}
			D_s(Q)=
			N_\La^{s-\max\{S,t_s\}}
			(\#\La)^{t_s-d}
			\eta(u)^{e_s-e_S}
			\prod_{\ell=k-d}^{k-t_S-1}N_{i_\ell}.
		\end{aligned}
		\]
		Using $\#\mQ_s(k,u)=N_\La^{s-t_s}(\#\La)^{t_s}\eta(u)^{e_s}$
		and
		$\max\{S,t_s\}=S+t_s-d$, we obtain
		\[
		\frac{\#\mQ_s(k,u)}{D_s(Q)}=N_\La^{S-d}\f{(\#\Lambda)^d}{\prod_{\ell=k-d}^{k-t_S-1}N_{i_\ell}}\eta(u)^{e_S}\le (\#\Lambda)^S\eta(u)^{e_S}\le(\#\Lambda)^S\eta(u)^{\tau S+1} ,
		\]
		where we used $N_{i_\ell}\ge1$ and $N_\La\le\#\La$.

		By Lemma \ref{approximate square geometry}, every square
		of side $r_s$ meets at most $2(n+2)$ depth-$s$
		approximate squares. Since the $D_s(Q)$ squares above are nonempty and
		partition $Q$,
		\[
		N_{r_s}(Q)\ge\frac{D_s(Q)}{2(n+2)}.
		\]
		Moreover, an $r$-square can be covered by at most $m^2$
		squares of side $r_s$, so $N_r(Q)\ge m^{-2}N_{r_s}(Q).$
		Using $N_r\asy P_r$, we conclude that $P_r(Q)\yge_{m,n}D_s(Q).$
		
		Finally, Lemmas \ref{approximate square geometry}
		and \ref{varying model counts} give
		\[
		\#\mQ_s(k,u)\ge N_{r_s}(\mE_{k,q_k}(u))\ge N_r(\mE_{k,q_k}(u))
		\yge_{m,n}\Phi(u,r,m^k).
		\]
		Combining the preceding estimates yields
		\[
		P_r(Q)
		\yge_{m,n}
		(\#\La)^{-S}\eta(u)^{-\tau S-1}\Phi(u,r,m^k).
		\]
	\end{proof}

	\subsection{Orbit blocks and translated models}
	
	For $U,V\in\Z$, put
	\[
	X_U^{(k)}=\Big\{(x,y)\in\Z^2:\Big\lfloor \f x{m^k}\Big\rfloor=U\Big\},
	\qquad
	G_{U,V}^{(k)}
	=X_U^{(k)}\cap
	\Big\{(x,y)\in\Z^2:\Big\lfloor \f y{n^{q_k}}\Big\rfloor=V\Big\}.
	\]
	Thus $X_U^{(k)}$ is a vertical strip, and $G_{U,V}^{(k)}$
	is a rectangular grid cell.
	\begin{lemma}\label{orbit model blocks}
		Let $z=(u,v)\in\Z^2$. For every $k\ge1$, the following statements hold.
		\begin{enumerate}[(i)]
			
			\item
			For each $1\le\ell\le k$, there is $T_{\ell,k}\in\Z^2$
			such that
			\begin{equation}\label{short word model inclusion}
				\La_\ell(z)\subseteq T_{\ell,k}+\mE_k[\mathbf0].
			\end{equation}
			
			\item
			Let $k\le\ell\le q_k$ and suppose that
			$\La_\ell(z)\cap X_U^{(k)}\ne\emptyset$.
			There are unique digits $i_d(\ell,U)\in\mI$,
			$k\le d<\ell$, satisfying
			\[
			U-m^{\ell-k}u=
			\sum_{d=k}^{\ell-1}i_d(\ell,U)m^{d-k}.
			\]
			Define $\mbB^{\ell,U}=(\mB_d^{\ell,U})_{d=k}^{q_k-1}$ by
			\[
			\mB_d^{\ell,U}=
			\begin{cases}
				\col_{i_d(\ell,U)},&k\le d<\ell,\\
				\{0\},&\ell\le d<q_k.
			\end{cases}
			\]
			Then
			\begin{equation}\label{fixed length model block}
				\La_\ell(z)\cap X_U^{(k)}
				=
				(m^kU,n^\ell v)+\mE_k[\mbB^{\ell,U}].
			\end{equation}
			
			\item
			Suppose
			that $\Big(\bigcup_{\ell>q_k}\La_\ell(z)\Big)\cap G_{U,V}^{(k)}\ne\varnothing$.
			There are unique digits $i_d(U)\in\mI$,
			$k\le d<q_k$, satisfying
			\[
			U-m^{q_k-k}
			\left\lfloor\frac{U}{m^{q_k-k}}\right\rfloor
			=
			\sum_{d=k}^{q_k-1}i_d(U)m^{d-k}.
			\]
			Set $\mbB^U=(\col_{i_d(U)})_{d=k}^{q_k-1}$.
			Then
			\begin{equation}\label{long word model block}
				\Big(\bigcup_{\ell>q_k}\La_\ell(z)\Big)\cap G_{U,V}^{(k)}=(m^kU,n^{q_k}V)+\mE_k[\mbB^U].
			\end{equation}
			In particular, the tail alphabets depend only on $k$ and $U$.
			
			\item
			For any $\mathbf i=(i_d)_{d=k}^{q_k-1}\in\mI^{q_k-k}$,
			put $\mbB^{\mathbf i}=(\col_{i_d})_{d=k}^{q_k-1}$.
			Then
			\begin{equation}\label{model orbit embedding}
				\left(
				m^{q_k}u+\sum_{d=k}^{q_k-1}i_dm^d,\,
				n^{q_k}v
				\right)
				+\mE_k[\mbB^{\mathbf i}]
				\subseteq\La_{q_k}(z)\subseteq \fola (z).
			\end{equation}
		\end{enumerate}
		
		Consequently, for every $\omega\in\Z^2$ and
		$k\le\ell\le q_k$, the set
		$\La_\ell(z)\cap C(\omega,m^k)$ is contained in at most
		two translated models, while
		$\Big(\bigcup_{\ell>q_k}\La_\ell(z)\Big)\cap C(\omega,m^k)$ is contained in at most
		$2(n+1)$ translated models.
		All these models have tail alphabets of cardinality at
		most $\Nm$.
		The corresponding intersections with $Q(w,m^k)$ are
		contained in $O_{m,n}(1)$ translated models, uniformly
		in $w\in\R^2$.
	\end{lemma}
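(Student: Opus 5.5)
The proof will be a direct digit computation. It rests on one explicit formula for orbit points. Unwinding $S_{(\bi,\bj)}=S_{(i_0,j_0)}\circ\cdots\circ S_{(i_{\ell-1},j_{\ell-1})}$ shows that every point of $\La_\ell(z)$ has the form
\[
(x,y)=\Bigl(m^\ell u+\sum_{d=0}^{\ell-1}i_dm^d,\ n^\ell v+\sum_{d=0}^{\ell-1}j_dn^d\Bigr),\qquad (i_d,j_d)\in\La .
\]
Here $i_0,j_0$ are the least significant digits. Since $0\le\sum_{d<k}i_dm^d<m^k$ and $0\le\sum_{d<q}j_dn^d<n^q$, the floor maps used to define $X_U^{(k)}$ and $G_{U,V}^{(k)}$ simply discard the low digits. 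Each part of the lemma then follows by reading off which digits these floors see and using uniqueness of finite base-$m$ and base-$n$ expansions.

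For (i), where $\ell\le k$, I fix any $(a,b)\in\La$ and pad each word of length $\ell$ with the constant digit $(a,b)$ at levels $\ell\le d<k$. This gives the inclusion with
\[
T_{\ell,k}=\Bigl(m^\ell u-\sum_{d=\ell}^{k-1}am^d,\ n^\ell v-\sum_{d=\ell}^{k-1}bn^d\Bigr).
\]
For (ii), where $k\le\ell\le q_k$, we have $\lfloor x/m^k\rfloor=m^{\ell-k}u+\sum_{d=k}^{\ell-1}i_dm^{d-k}$. So the condition $(x,y)\in X_U^{(k)}$ fixes the integer $U-m^{\ell-k}u\in[0,m^{\ell-k})$, and hence determines the horizontal digits $i_d=i_d(\ell,U)$ for $k\le d<\ell$ uniquely. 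These digits lie in $\mI$ because some orbit point realizes them. The remaining free data are then
\begin{itemize}
\item the mixed pairs $(i_d,j_d)\in\La$ for $d<k$, and
\item the vertical digits $j_d\in\col_{i_d(\ell,U)}$ for $k\le d<\ell$.
\end{itemize}
Every such choice gives an orbit point in the strip, and the vertical levels $\ell\le d<q_k$ carry only the digit $0$. This proves equality~\eqref{fixed length model block} with translate $(m^kU,n^\ell v)$.

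For (iii) I apply the same computation with $\ell>q_k$. The residue of $U$ modulo $m^{q_k-k}$ equals $\sum_{d=k}^{q_k-1}i_dm^{d-k}$, because $m^{\ell-k}u$ and the digits at levels $d\ge q_k$ contribute multiples of $m^{q_k-k}$. So the digits $i_d(U)$ for $k\le d<q_k$ depend only on $k$ and $U$, and not on $\ell$. The value $V=\lfloor y/n^{q_k}\rfloor$ absorbs $n^\ell v$ and all vertical digits of level $\ge q_k$.

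Fix one orbit point in $G_{U,V}^{(k)}$, of some length $\ell$. Keeping its digits at levels $\ge k$ horizontally and $\ge q_k$ vertically, and varying
\begin{itemize}
\item the mixed pairs at levels $d<k$, and
\item $j_d\in\col_{i_d(U)}$ for $k\le d<q_k$,
\end{itemize}
stays inside $\La_\ell(z)\cap G_{U,V}^{(k)}$. This fills exactly $(m^kU,n^{q_k}V)+\mE_k[\mbB^U]$. Conversely, every point of the union over $\ell>q_k$ lying in the cell is of this form, so the union over $\ell$ equals this single translate.

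For (iv) I take $\ell=q_k$, prescribe the horizontal digits $i_d$ at levels $k\le d<q_k$, and let the rest vary as in (ii).

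For the consequences, a cube $C(\omega,m^k)$ meets at most two strips $X_U^{(k)}$. By (ii), for fixed $\ell$ each strip contributes one translated model. Since $n^{q_k}\le m^k<n^{q_k+1}$, the cube meets at most $n+1$ horizontal rows of cells, and hence at most $2(n+1)$ cells $G_{U,V}^{(k)}$. By (iii), each cell contributes one model. All tail alphabets appearing are columns $\col_i$ or $\{0\}$, so their cardinality is at most $\Nm$. Finally, $Q(w,m^k)$ is covered by $O(1)$ lattice cubes of side $m^k$, which gives the last assertion uniformly in $w$.

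I expect the main obstacle to be the bookkeeping in (iii): checking that the tail digits are independent of $\ell$, and that the union over all $\ell>q_k$ collapses to one translate rather than to several. This requires care that no carries arise, which holds because all digits are nonnegative and bounded by the base.
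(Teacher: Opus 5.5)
Your proposal is correct and follows essentially the same route as the paper. You pad with a fixed digit for (i) and read off the floors via uniqueness of finite base-$m$ expansions for (ii). For (iii) you reduce $U$ modulo $m^{q_k-k}$ and splice free low and middle digits into a fixed long word for the reverse inclusion; you deduce (iv) from (ii) with $\ell=q_k$, and you count strips and cells using $n^{q_k}\le m^k<n^{q_k+1}$. The only cosmetic difference is that you treat $Q(w,m^k)$ by covering it with $O(1)$ cubes $C(\omega,m^k)$, whereas the paper directly counts at most $3$ vertical strips and $2n+1$ horizontal strips; both give the same uniform bound.
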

	
	\begin{proof}
		Recall that every point $(x,y)\in\La_\ell((u,v))$ has a
		representation
		\[
		x=m^\ell u+\sum_{d=0}^{\ell-1}i_dm^d,
		\qquad
		y=n^\ell v+\sum_{d=0}^{\ell-1}j_dn^d,
		\]
		where $(i_d,j_d)\in\La$ for $0\le d<\ell$.
		
		For (i), fix $(i_*,j_*)\in\La$ and set
		\[
		T_{\ell,k}=\left(
		m^\ell u-\sum_{d=\ell}^{k-1}i_*m^d,\,
		n^\ell v-\sum_{d=\ell}^{k-1}j_*n^d\right).
		\]
		Then
		\begin{align*}
			(x,y)-T_{\ell,k}=\left(
			\sum_{d=0}^{\ell-1}i_dm^d+\sum_{d=\ell}^{k-1}i_*m^d,
			\sum_{d=0}^{\ell-1}j_dn^d+\sum_{d=\ell}^{k-1}j_*n^d
			\right).
		\end{align*}
		This point belongs to $\mE_k[\mathbf0]$ since its mixed
		word consists of the original $\ell$ symbols followed
		by $k-\ell$ copies of $(i_*,j_*)$, and its tail digits
		are zero. This proves \eqref{short word model inclusion}.
		
		For (ii), note that
		\[
		\left\lfloor\frac{x}{m^k}\right\rfloor=
		m^{\ell-k}u+\sum_{d=k}^{\ell-1}i_dm^{d-k}.
		\]
		Thus membership in $X_U^{(k)}$ is equivalent to
		\[
		\sum_{d=k}^{\ell-1}i_dm^{d-k}=
		U-m^{\ell-k}u.
		\]
		Uniqueness of the finite base-$m$ expansion determines
		the digits $i_d=i_d(\ell,U)$ for $k\le d<\ell$.
		The assumed nonemptiness ensures that these digits
		belong to $\mI$.
		
		The remaining admissible choices are precisely
		\[
		(i_d,j_d)\in\La\quad(0\le d<k),\qquad j_d\in\col_{i_d(\ell,U)}
		\quad(k\le d<\ell).
		\]
		For these choices,
		\[
		(x,y)=(m^kU,n^\ell v)+\left(
		\sum_{d=0}^{k-1}i_dm^d,\,
		\sum_{d=0}^{k-1}j_dn^d
		+\sum_{d=k}^{\ell-1}j_dn^d\right).
		\]
		The second vector ranges over exactly
		$\mE_k[\mbB^{\ell,U}]$, with zero tail digits at
		levels $\ell\le d<q_k$. This proves
		\eqref{fixed length model block}.
		
		For (iii), consider a point of
		$\Big(\bigcup_{\ell>q_k}\La_\ell(z)\Big)\cap G_{U,V}^{(k)}$ represented by a word
		of length $\ell>q_k$. Its quotients satisfy
		\[
		U=m^{\ell-k}u+\sum_{d=k}^{\ell-1}i_dm^{d-k},
		\qquad
		V=n^{\ell-q_k}v+\sum_{d=q_k}^{\ell-1}j_dn^{d-q_k}.
		\]
		Reducing the first identity modulo $m^{q_k-k}$ gives
		\[
		U\equiv\sum_{d=k}^{q_k-1}i_dm^{d-k}\pmod{m^{q_k-k}}.
		\]
		Its finite base-$m$ expansion therefore determines
		$i_d=i_d(U)$ for $k\le d<q_k$, independently of $\ell$.
		
		Furthermore,
		\[
		(x,y)-(m^kU,n^{q_k}V)=\left(
		\sum_{d=0}^{k-1}i_dm^d,\,
		\sum_{d=0}^{q_k-1}j_dn^d\right)
		\in\mE_k[\mbB^U].
		\]
		This proves one inclusion in
		\eqref{long word model block}.
		
		For the reverse inclusion, choose a word
		$((i_d',j_d'))_{d=0}^{\ell'-1}$ of length
		$\ell'>q_k$ whose orbit point belongs to
		$G_{U,V}^{(k)}$.
		For any $\omega\in\Omega_k[\mbB^U]$, with mixed
		digits $(i_d(\omega),j_d(\omega))$ and tail digits $j_d(\omega)$, define
		a word of length $\ell'$ by
		\[
		(\widehat i_d,\widehat j_d)=
		\begin{cases}
			(i_d(\omega),j_d(\omega)),&0\le d<k,\\
			(i_d(U),j_d(\omega)),&k\le d<q_k,\\
			(i_d',j_d'),&q_k\le d<\ell'.
		\end{cases}
		\]
		Every symbol in this word belongs to $\La$.

		The preceding quotient identities now give
		\[
		m^{\ell'}u+\sum_{d=0}^{\ell'-1}\widehat i_dm^d=
		m^kU+\sum_{d=0}^{k-1}i_d(\omega)m^d,\qquad
		n^{\ell'}v
		+\sum_{d=0}^{\ell'-1}\widehat j_dn^d=
		n^{q_k}V+\sum_{d=0}^{q_k-1}j_d(\omega)n^d.
		\]
		Hence
		\[
		(m^kU,n^{q_k}V)+\Psi_k(\omega)
		\in\La_{\ell'}(z)\cap G_{U,V}^{(k)}.
		\]
		Since $\omega$ was arbitrary, the reverse inclusion
		follows, proving \eqref{long word model block}.
		
		For (iv), set
		\[
		U_{\mathbf i}=m^{q_k-k}u
		+\sum_{d=k}^{q_k-1}i_dm^{d-k}.
		\]
		Since every $i_d$ belongs to $\mI$,
		$\La_{q_k}(z)\cap X_{U_{\mathbf i}}^{(k)}$ is nonempty.
		Applying (ii) with $\ell=q_k$ and $U=U_{\mathbf i}$
		gives \eqref{model orbit embedding}.
		
		Finally, $C(\omega,m^k)$ meets at most two vertical strips
		$X_U^{(k)}$. Since
		$n^{q_k}\le m^k<n^{q_k+1}$, it meets at most
		$n+1$ horizontal grid strips of height $n^{q_k}$,
		and hence at most $2(n+1)$ cells $G_{U,V}^{(k)}$.
		The asserted model inclusions follow from (ii) and
		(iii). Each tail alphabet is either $\{0\}$ or a
		column fiber $\col_i$, so its cardinality is at most
		$\Nm$.
		
		For $Q(w,m^k)$, whose side length is $2m^k$, the
		corresponding numbers of vertical and horizontal
		strips are at most $3$ and $2n+1$, respectively.
		This proves the final assertion uniformly in $w$.
	\end{proof}

	\subsection[Centered windows and model transfer]{Centered windows and model transfer}
	
	\begin{lemma}\label{centered window}
		Let $z=(u,v)\in\Z^2$ and $h\ge2$.
		Every point of $\fola(z)\cap [-h,h]^2$ has an admissible
		word representation of length at most $q(h)+1$.
		
		Moreover, if an admissible word of length $k>p(h)+1$
		represents a point of $\fola(z)\cap [-h,h]^2$, then
		\[
		u\in\{0,-1\},\qquad
		i_\ell=-(m-1)u
		\quad\text{for\ }p(h)+1\le \ell<k.
		\]
	\end{lemma}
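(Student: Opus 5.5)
The plan is to bound the two coordinates of an orbit point separately. The vertical coordinate gives the length bound. The horizontal coordinate forces the digit constraint.

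Write a point of $\fola(z)$ represented by a word of length $k$ as
\[
x=m^ku+\sum_{\ell=0}^{k-1}i_\ell m^\ell,\qquad y=n^kv+\sum_{\ell=0}^{k-1}j_\ell n^\ell .
\]
The key observation is that $\lfloor y/n^k\rfloor=v$ and $\lfloor x/m^k\rfloor=u$, since the digit sums lie in $[0,n^k)$ and $[0,m^k)$ respectively. Hence $y\in[n^kv,n^k(v+1))$ and $x\in[m^ku,m^k(u+1))$.

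\emph{Vertical coordinate.} Suppose $|y|\le h$ and $k\ge q(h)+2$, so that $n^k\ge n^{q(h)+2}>n h\ge 2h$. The interval $[n^kv,n^k(v+1))$ must meet $[-h,h]$, which forces $v\in\{0,-1\}$.

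If $v=0$, then $y\ge0$ and the top vertical digits must almost vanish. Define the truncation $y'=\sum_{\ell<k'}j_\ell n^\ell$ with $k'=q(h)+1$. Since $y\le h<n^{q(h)+1}$, every digit $j_\ell$ with $\ell\ge q(h)+1$ equals $0$. The case $v=-1$ is symmetric: there $y<0$ and $y\ge-h>-n^{q(h)+1}$ force $j_\ell=n-1$ for $\ell\ge q(h)+1$.

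The shortening step uses the identity
\[
n\cdot n^{k-1}(-1)+(n-1)n^{k-1}=-n^{k-1},
\]
and the analogous identity in $x$. In words, a trailing symbol whose vertical digit is $0$ (when $v=0$), or $n-1$ (when $v=-1$), can be absorbed into the starting point.

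The obstacle is that absorption requires the matching horizontal condition at the same time. When $v=0$, deleting the last symbol $(i_{k-1},0)$ gives the same point only if $u m+i_{k-1}=u$, which fails in general. So the length-$(q(h)+1)$ representation cannot be obtained merely by deleting symbols.

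The paper's scheme suggests a different route: represent the same point by another word with a different starting-point expansion. That is not available here, because $z$ is fixed and the maps are expanding. I therefore expect the first assertion to need a horizontal constraint as well.

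\emph{Horizontal coordinate.} Since $m>n$, we have $p(h)\le q(h)$. Apply the same argument with base $m$: if $k>p(h)+1$, then $m^k\ge m^{p(h)+2}>2h$, which forces $u\in\{0,-1\}$.

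If $u=0$, then $x\ge0$ and $x\le h<m^{p(h)+1}$ give $i_\ell=0$ for all $\ell\ge p(h)+1$. If $u=-1$, then
\[
x=-m^k+\sum_{\ell<k} i_\ell m^\ell\ge -h>-m^{p(h)+1},
\]
and the base-$m$ complement expansion forces $i_\ell=m-1$ for $p(h)+1\le\ell<k$. In both cases $i_\ell=-(m-1)u$. This proves the second assertion directly.

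\emph{Length bound.} Now suppose $k\ge q(h)+2$. Then both $u\in\{0,-1\}$ and $v\in\{0,-1\}$ hold, and the last symbol is $(i_{k-1},j_{k-1})$ with $i_{k-1}=-(m-1)u$ and $j_{k-1}=-(n-1)v$. Such a symbol satisfies $S_{(i_{k-1},j_{k-1})}(z)=z$, because $mu-(m-1)u=u$ and $nv-(n-1)v=v$.

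Since $S_{(i_{k-1},j_{k-1})}$ is the innermost map applied to $z$, it can be deleted. This yields a word of length $k-1$ representing the same point. Iterating the deletion brings the length down to $q(h)+1$.

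The main care is the edge case $k=q(h)+1$ versus $q(h)+2$ in the inequality $n^k>2h$. This is guaranteed by $n\ge2$ and $h<n^{q(h)+1}$, which give $n^{q(h)+2}\ge 2n^{q(h)+1}>2h$.
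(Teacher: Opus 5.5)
Your proof is correct and is essentially the paper's argument. You bound the two coordinates separately to force $u,v\in\{0,-1\}$ and the fixed-point symbol $(-(m-1)u,-(n-1)v)$ at every level $\ell\ge q(h)+1$ (using $p(h)\le q(h)$); you then delete these trailing symbols one at a time via $S_{(i,j)}(z)=z$, whereas the paper collapses them at once with the identity $m^ku-(m-1)u\sum_{\ell=q(h)+1}^{k-1}m^\ell=m^{q(h)+1}u$. The exploratory paragraphs about the ``obstacle'' and the ``paper's scheme'' should be cut, since your final argument already resolves them.
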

	
	\begin{proof}
		Consider any admissible representation
		\[
		x=m^ku+\sum_{\ell=0}^{k-1}i_\ell m^\ell,
		\qquad
		y=n^kv+\sum_{\ell=0}^{k-1}j_\ell n^\ell
		\]
		of a point $(x,y)\in \fola(z)\cap[-h,h]^2$.
		
		We first determine the restrictions on the horizontal
		digits. Since $0\le i_\ell\le m-1,\
		m^ku\le x\le m^k(u+1)-1.$
		
		If $u\notin\{0,-1\}$, this gives $|x|\ge m^k$.
		Hence $m^k\le h$, and therefore $k\le p(h)$.
		
		If $u\in\{0,-1\}$, the horizontal coordinate satisfies
		\[
		|x|=
		\begin{cases}
			\displaystyle\sum_{\ell=0}^{k-1}i_\ell m^\ell,
			&u=0,\\[6pt]
			\displaystyle
			1+\sum_{\ell=0}^{k-1}(m-1-i_\ell)m^\ell,\quad
			&u=-1.
		\end{cases}
		\]
		Since $|x|\le h<m^{p(h)+1}$,
		these identities imply
		\[
		i_\ell=
		\begin{cases}
			0,&u=0,\\
			m-1,&u=-1,
		\end{cases}
		\qquad p(h)+1\le \ell<k.
		\]
		Together with the bound $k\le p(h)$ when
		$u\notin\{0,-1\}$, this proves the horizontal assertion.
		
		The same calculation for the vertical coordinate gives
		\[
		\begin{cases}
			k\le q(h),&v\notin\{0,-1\},\\
			j_\ell=-(n-1)v\quad(q(h)+1\le\ell<k),\quad
			&v\in\{0,-1\}.
		\end{cases}
		\]
		
		It remains to obtain a representation of the required
		length. If $k\le q(h)+1$, there is nothing to prove.
		Suppose that $k>q(h)+1$. The preceding bounds force
		$u,v\in\{0,-1\}$. Since $p(h)\le q(h)$, we have
		\[
		(i_\ell,j_\ell)=
		\bigl(-(m-1)u,-(n-1)v\bigr)
		\qquad(q(h)+1\le \ell<k).
		\]
		Consequently,
		\begin{gather*}
			m^ku+\sum_{\ell=q(h)+1}^{k-1}i_\ell m^\ell=
			m^ku-(m-1)u\sum_{\ell=q(h)+1}^{k-1}m^\ell
			=m^{q(h)+1}u,\\
			n^kv+\sum_{\ell=q(h)+1}^{k-1}j_\ell n^\ell=
			n^kv-(n-1)v\sum_{\ell=q(h)+1}^{k-1}n^\ell=n^{q(h)+1}v.
		\end{gather*}
		Thus the same point can be written as
		\[
		(x,y)=\left(m^{q(h)+1}u+\sum_{\ell=0}^{q(h)}i_\ell m^\ell,\,
		n^{q(h)+1}v+\sum_{\ell=0}^{q(h)}j_\ell n^\ell\right).
		\]
		The proof is complete.
	\end{proof}
	
	\begin{lemma}\label{Hausdorff orbit transfer}
		Let $z=(u,v)\in \Z^2$. There is an integer $C=C(m,n,\La,z)\ge1$ such that
		for all sufficiently large $k$ there are integer vectors $a_k,b_{k,\ell}$
		with
		\begin{equation}\label{Hausdorff model inclusions}
			a_k+\mE_{k-C,q_{k-C}}(u)
			\subseteq \fola(z)\cap V_k^{(m)}
			\subseteq\bigcup_{\ell=1}^{q_k+C}
			\bigl(b_{k,\ell}+\mE_{k+C,q_{k+C}}(u)\bigr).
		\end{equation}
		In particular, uniformly for $1\le r\le m^k$,
		\begin{equation}\label{packing estimate1}
			\Phi(u,r,m^k)\yle_{m,n,\La,z}P_r(\fola(z)\cap V_k^{(m)})
			\yle_{m,n,\La,z}(1+k)\Phi(u,r,m^k),
		\end{equation}
		and for every $\alpha>0$,
		\begin{equation}\label{content model transfer}
			\begin{split}
				m^{-\alpha C}\nu_\alpha(\mE_{k-C,q_{k-C}}(u),C(0,m^{k-C}))
				&\le\nu_\alpha(\fola(z),V_k^{(m)})\\
				&\le(q_k+C)m^{\alpha C}
				\nu_\alpha(\mE_{k+C,q_{k+C}}(u),C(0,m^{k+C})).
			\end{split}
		\end{equation}
	\end{lemma}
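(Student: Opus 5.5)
The plan is to prove the two inclusions in \eqref{Hausdorff model inclusions} first. The estimates \eqref{packing estimate1} and \eqref{content model transfer} should then follow from monotonicity, subadditivity and translation invariance, combined with Lemma~\ref{varying model counts} and \eqref{Phi comparable scales}.

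\emph{Lower inclusion.} Take $k'=k-C$ and use Lemma~\ref{orbit model blocks}(iv) at level $k'$. Choose every $i_d$ to be the endpoint digit $-(m-1)u$, that is $0$ if $u=0$ and $m-1$ if $u=-1$. If $\eta(u)=1$, any $i_d\in\mI$ works, since then every column is nonempty and we may pass to the subset with zero tail digits. With this choice $\mbB^{\mathbf i}=\mbB^u$ up to shrinking each column alphabet to $\{0\}$, and shrinking is harmless because it only passes to a subset. The translate is
\[
\Bigl(m^{q_{k'}}u+\sum_{d} i_dm^d,\ n^{q_{k'}}v\Bigr).
\]
For $u\in\{0,-1\}$ its horizontal coordinate telescopes to $m^{k'}u$, so it has size $O(m^{k'})$. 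For other $u$ we take $\mathbf i$ arbitrary, but then $\eta(u)=1$ and we may instead use the shorter blocks $\La_{k'}(z)$. Their translate $(m^{k'}u,n^{k'}v)$ has size $O(m^{k'})$ and the block contains $\mE_{k'}[\mathbf0]$.

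The vertical translate $n^{q_{k'}}v$ is $O(m^{k'})$ in both cases. Hence for $C$ large, depending on $z$, the whole translated model lies in $V_k^{(m)}$. If $v\ne 0$ one uses the smaller exponent and absorbs the loss into $C$.

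\emph{Upper inclusion.} By Lemma~\ref{centered window} with $h\asymp m^k$, every point of $\fola(z)\cap V_k^{(m)}$ is represented by a word of length $\ell\le q(h)+1\le q_k+C$. So it suffices to cover each $\La_\ell(z)\cap V_k^{(m)}$ by a single translate of $\mE_{k+C,q_{k+C}}(u)$.

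For $\ell\le k+C$ this is Lemma~\ref{orbit model blocks}(i), since $\mE_{k+C}[\mathbf0]\subseteq\mE_{k+C,q_{k+C}}(u)$.

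For $k+C<\ell\le q_k+C$, Lemma~\ref{centered window} forces $u\in\{0,-1\}$ and fixes the horizontal digits $i_d=-(m-1)u$ for $d\ge p(h)+1$. Take $K=k+C\ge p(h)+1$. Then Lemma~\ref{orbit model blocks}(ii) at level $K$ gives a block with $U$ unique (one strip only) and tail alphabets $\col_{-(m-1)u}$ at levels $K\le d<\ell$ and $\{0\}$ afterwards. All of these are contained in $\mB_u$, because $\mB_u$ is exactly that column when $\eta(u)\ge2$. When $\eta(u)=1$ the column has at most one element $j_0$, and the translate can absorb the fixed digits $j_0$. Since $\ell\le q_k+C\le q_K$, the block embeds in $b_{k,\ell}+\mE_{K,q_K}(u)$.

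I expect this step to be the main obstacle. It requires careful bookkeeping of which vertical tail levels are free, so that fixed nonzero tail digits are moved into the translation vector, together with the edge cases $\ell$ near $q_k$ and $v\in\{0,-1\}$.

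\emph{Consequences.} For packing, monotonicity and the lower inclusion give $P_r\ge P_r(\mE_{k-C,q_{k-C}}(u))\asymp\Phi(u,r,m^{k-C})$ by \eqref{model all scales}, valid when $r\le m^{k-C}$. For $m^{k-C}<r\le m^k$, $\Phi$ is bounded, so the bound holds trivially. Then \eqref{Phi comparable scales} replaces $m^{k-C}$ by $m^k$. The upper bound uses subadditivity over the $q_k+C=O(1+k)$ translates, together with \eqref{model all scales} at $m^{k+C}$ and \eqref{Phi comparable scales}.

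For the content estimate, $\nu_\alpha(A,F)=d(F)^{-\alpha}\mathscr H_\alpha(A\cap F)$. Translation invariance and monotonicity give
\[
\mathscr H_\alpha(\mE_{k-C,q_{k-C}}(u))\le\mathscr H_\alpha(\fola(z)\cap V_k^{(m)})\le\sum_\ell\mathscr H_\alpha(\mE_{k+C,q_{k+C}}(u)).
\]
The models lie in $C(0,m^{k\pm C})$, so $\mathscr H_\alpha$ of a model equals $m^{\alpha(k\pm C)}\nu_\alpha(\cdot,C(0,m^{k\pm C}))$, while $d(V_k^{(m)})=m^k$. Dividing through yields the stated factors $m^{\mp\alpha C}$ and $q_k+C$.
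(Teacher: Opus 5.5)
Your proposal follows the paper's proof essentially step for step: endpoint-digit telescoping for the lower inclusion (or $\La_{k-C}(z)$ when the tail alphabet is $\{0\}$), Lemma~\ref{centered window} together with Lemma~\ref{orbit model blocks}(i)--(ii) and the forced endpoint digits for the upper inclusion, and monotonicity, subadditivity and translation invariance for \eqref{packing estimate1} and \eqref{content model transfer}.

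Two details need the translation repair you already anticipate. First, when $\eta(u)\ge2$ and $0\notin\mB_u$, the inclusions $\mE_{k+C}[\mathbf0]\subseteq\mE_{k+C,q_{k+C}}(u)$ and $\{0\}\subseteq\mB_u$ are false as written; they must be replaced by translates that fix those tail levels to some $\beta\in\mB_u$, as the paper does with $\gamma_{k+C}$ and $-\beta\sum_{d=\ell}^{q_{k+C}-1}n^d$. Second, the lower-inclusion case split should be on $\eta(u)$ rather than on whether $u\in\{0,-1\}$. If $u\in\{0,-1\}$ but the endpoint column is empty, the endpoint digit is not admissible, and an arbitrary choice of $\mathbf i$ puts the translate at horizontal distance at least $m^{q_{k-C}-1}\gg m^k$. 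Using $\La_{k-C}(z)$ whenever $\eta(u)=1$ avoids this.
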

	
	\begin{proof}
		Choose an integer $C$
		such that $m^C>2\max\{|u|+1,|v|+1\},$
		and take $k>C$.
		
		We first prove the left-hand inclusion in
		\eqref{Hausdorff model inclusions}. 
		If $\eta(u)=1$, then $\mB_u=\{0\}$, and hence
		\[
		(m^{k-C}u,n^{k-C}v)+\mE_{k-C,q_{k-C}}(u)=\Lambda_{k-C}(z)\subseteq \fola(z).
		\]
		In this case, set $a_k=(m^{k-C}u,n^{k-C}v)$.
		
		Suppose instead that $\eta(u)>1$. Then $u\in\{0,-1\}$ and
		$(-(m-1)u,j)\in\Lambda$ for every $j\in\mB_u$.
		Thus every choice of the
		digits defining $\mE_{k-C,q_{k-C}}(u)$ gives an admissible word of
		length $q_{k-C}$ by taking
		\[
		(i_\ell,j_\ell)\in\Lambda\quad(0\le \ell<k-C),
		\qquad
		(i_\ell,j_\ell)=(-(m-1)u,j_\ell)\quad(k-C\le \ell<q_{k-C}).
		\]
		The horizontal contribution of the latter digits satisfies
		\[
		m^{q_{k-C}}u-(m-1)u\sum_{\ell=k-C}^{q_{k-C}-1}m^\ell
		=m^{q_{k-C}}u-u(m^{q_{k-C}}-m^{k-C})=m^{k-C}u.
		\]
		Consequently,
		\[
		(m^{k-C}u,n^{q_{k-C}}v)+\mE_{k-C,q_{k-C}}(u)
		\subseteq\Lambda_{q_{k-C}}(z)\subseteq \fola(z).
		\]
		Set $a_k=(m^{k-C}u,n^{q_{k-C}}v)$ in this case.
		
		In either case, every $(x,y)\in a_k+\mE_{k-C,q_{k-C}}(u)$ satisfies
		\[
		|x|\le (|u|+1)m^{k-C},
		\qquad
		|y|\le(|v|+1)n^{q_{k-C}}\le(|v|+1)m^{k-C}.
		\]
		Our choice of $C$ gives
		\[
		\max\{|x|,|y|\}\le\max\{|u|+1,|v|+1\}m^{k-C}<\frac{m^k}{2}.
		\]
		Therefore $a_k+\mE_{k-C,q_{k-C}}(u)
		\subseteq \fola(z)\cap V_k^{(m)}.$
		
		We next prove the right-hand inclusion. 
		By Lemma \ref{centered window}, every point of
		$\fola(z)\cap V_k^{(m)}$ has a representation of length at most
		$q_k+1\le q_k+C$. It therefore suffices to show that,
		for each $1\le\ell\le q_k+C$, there is an integer vector
		$b_{k,\ell}$ such that
		\[
		\Lambda_\ell(z)\cap V_k^{(m)}
		\subseteq b_{k,\ell}+\mE_{k+C,q_{k+C}}(u).
		\]
		
		Fix a digit $\beta\in\mB_u$. First suppose that $\ell\le k+C$.
		By Lemma \ref{orbit model blocks}(i), there is an integer
		vector $T_{\ell,k+C}$ such that $
		\Lambda_\ell(z)
		\subseteq T_{\ell,k+C}+\mE_{k+C}[\mathbf 0].$
		Define
		\[
		\gamma_{k+C}=\left(0,\beta\sum_{d=k+C}^{q_{k+C}-1}n^d\right).
		\]
		Fixing every tail digit to be $\beta$ gives $
		\gamma_{k+C}+\mE_{k+C}[\mathbf 0]\subseteq\mE_{k+C,q_{k+C}}(u).$
		Hence, with $b_{k,\ell}=T_{\ell,k+C}-\gamma_{k+C}$,
		\[
		\Lambda_\ell(z)
		\subseteq
		T_{\ell,k+C}+\mE_{k+C}[\mathbf 0]
		\subseteq
		b_{k,\ell}+\mE_{k+C,q_{k+C}}(u).
		\]
		
		Now suppose that $\ell>k+C$ and
		$\Lambda_\ell(z)\cap V_k^{(m)}\ne\emptyset$.
		Consider any representation
		\[
		x=m^\ell u+\sum_{d=0}^{\ell-1}i_dm^d,
		\qquad
		y=n^\ell v+\sum_{d=0}^{\ell-1}j_dn^d
		\]
		of a point in this intersection. Since $k+C\ge k+1$,
		Lemma \ref{centered window} implies that
		\[
		u\in\{0,-1\},\qquad i_d=-(m-1)u\quad(k+C\le d<\ell).
		\]
		In particular,
		\[
		x=m^\ell u-(m-1)u\sum_{d=k+C}^{\ell-1}m^d
		+\sum_{d=0}^{k+C-1}i_dm^d=
		m^{k+C}u+\sum_{d=0}^{k+C-1}i_dm^d.
		\]
		If $\eta(u)>1$, then $j_d\in\mB_u$ for
		$k+C\le d<\ell$. Set
		\[
		b_{k,\ell}=\left(
		m^{k+C}u,\,
		n^\ell v-\beta\sum_{d=\ell}^{q_{k+C}-1}n^d\right).
		\]
		We then have
		\[
		(x,y)-b_{k,\ell}=\left(
		\sum_{d=0}^{k+C-1}i_dm^d,\,
		\sum_{d=0}^{\ell-1}j_dn^d
		+\beta\sum_{d=\ell}^{q_{k+C}-1}n^d
		\right)
		\in\mE_{k+C,q_{k+C}}(u).
		\]
		Indeed, the mixed symbols below $k+C$ are admissible, and
		every tail digit from level $k+C$ to level $q_{k+C}-1$ belongs to
		$\mB_u$.
		If $\eta(u)=1$, the nonempty intersection under consideration
		forces the endpoint column $\Theta_{-(m-1)u}$ to be nonempty.
		By the definition of $\eta(u)$, it must therefore consist
		of a single digit, say $\Theta_{-(m-1)u}=\{j_*\}$. Thus
		$j_d=j_*$ for $k+C\le d<\ell$. Set
		\[
		b_{k,\ell}=\left(
		m^{k+C}u,\,
		n^\ell v+j_*\sum_{d=k+C}^{\ell-1}n^d
		\right).
		\]
		Since $\mB_u=\{0\}$ in this case,
		\[
		(x,y)-b_{k,\ell}=\left(
		\sum_{d=0}^{k+C-1}i_dm^d,\,
		\sum_{d=0}^{k+C-1}j_dn^d
		\right)
		\in\mE_{k+C,q_{k+C}}(u).
		\]

		All the vectors constructed above depend on $k$ and $\ell$,
		but not on the individual point in
		$\Lambda_\ell(z)\cap V_k^{(m)}$. Taking the union over
		$1\le\ell\le q_k+C$ proves the right-hand inclusion in
		\eqref{Hausdorff model inclusions}.
		
		We now deduce the packing estimates. \eqref{Hausdorff model inclusions} yields
		\[
		P_r\bigl(\mE_{k-C,q_{k-C}}(u)\bigr)
		\le P_r(\fola(z)\cap V_k^{(m)})
		\le
		(q_k+C)P_r\bigl(\mE_{k+C,q_{k+C}}(u)\bigr).
		\]
		The model estimate \eqref{model all scales} and the
		fixed-scale comparison \eqref{Phi comparable scales} yield
		\[
		P_r\bigl(\mE_{k+C,q_{k+C}}(u)\bigr)
		\asy_{m,n,\Lambda,z}\Phi(u,r,m^k)
		\qquad(1\le r\le m^k),
		\]
		and
		\[
		P_r\bigl(\mE_{k-C,q_{k-C}}(u)\bigr)
		\asy_{m,n,\Lambda,z}\Phi(u,r,m^k)
		\qquad(1\le r\le m^{k-C}).
		\]
		Thus the upper bound in \eqref{packing estimate1} holds
		throughout the required range, since $q_k+C=O_{m,n,z}(1+k)$,
		and the lower bound holds when $r\le m^{k-C}$. For the remaining range $m^{k-C}<r\le m^k$, we have
		\[
		0\le k-p(r)\le C,
		\qquad
		0\le q_k-q(r)\le\lceil\tau C\rceil.
		\]
		Using $N_\Lambda\le\#\Lambda$ in the definition of $\Phi$
		therefore gives
		\[
		1\le\Phi(u,r,m^k)
		\le
		(\#\Lambda)^C\eta(u)^{\lceil\tau C\rceil}.
		\]
		Since $\fola(z)\cap V_k^{(m)}$
		is nonempty for all sufficiently large $k$, so $P_r(\fola(z)\cap V_k^{(m)})\ge1$.
		This proves the remaining lower bound and completes
		\eqref{packing estimate1}.
		
		Finally, fix $\alpha>0$. Applying translation invariance,
		monotonicity, and subadditivity of $\mathscr H_\alpha$
		to \eqref{Hausdorff model inclusions}, we obtain
		\[
		\mathscr H_\alpha\bigl(\mE_{k-C,q_{k-C}}(u)\bigr)\le
		\mathscr H_\alpha(\fola(z)\cap V_k^{(m)})\le(q_k+C)
		\mathscr H_\alpha\bigl(\mE_{k+C,q_{k+C}}(u)\bigr).
		\]
		Recall that $d(V_k^{(m)})=m^k,\ d(C(0,m^{k\pm C}))=m^{k\pm C}$
		and that
		$\mE_{k\pm C,q_{k\pm C}}(u)\subseteq C(0,m^{k\pm C})$.
		Consequently,
		\begin{gather*}
			m^{-\alpha k}
			\mathscr H_\alpha\bigl(\mE_{k\pm C,q_{k\pm C}}(u)\bigr)=
			m^{\pm\alpha C}
			\nu_\alpha\bigl(
			\mE_{k\pm C,q_{k\pm C}}(u),
			C(0,m^{k\pm C})
			\bigr),\\
			m^{-\alpha k}\mathscr H_\alpha(\fola(z)\cap V_k^{(m)})=
			\nu_\alpha(\fola(z),V_k^{(m)}).
		\end{gather*}
		Dividing the preceding inequalities by $m^{\alpha k}$
		therefore gives \eqref{content model transfer}.
	\end{proof}

	\section[Counting estimates and dimension formulae]{Counting estimates and dimension formulae}\label{section counts}
	
	Throughout this section, fix $m>n$ and $z=(u,v)\in\Z^2$, and write
	$A=\fola(z)$. Set
	\[
	D_0(u)=\log_m\Big(\frac{\#\La}{\eta(u)}\Big)+\log_n\eta(u),\qquad
	D_1(u)=\log_m N_\La+\log_n\eta(u),
	\]
	and write $\ud(u)=\min\{D_0(u),D_1(u)\}$ and
	$\od(u)=\max\{D_0(u),D_1(u)\}$.
	
	\begin{proposition}\label{mass}
		For every $z=(u,v)\in\Z^2$,
		$\dimLM\fola(z)=\dimUM\fola(z)=D_0(u)$.
	\end{proposition}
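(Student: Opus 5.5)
The plan is to read off the mass dimension from the packing estimate \eqref{packing estimate1} of Lemma~\ref{Hausdorff orbit transfer} at the smallest scale $r=1$. Since $A=\fola(z)\subseteq\Z^2$, we have $P_1(A\cap E)\asymp\#(A\cap E)$ for every finite $E$: squares of side $1$ centered at distinct lattice points are disjoint up to boundary, and the constant is harmless in any case. Thus \eqref{packing estimate1} with $r=1$ gives, for all large $k$,
\[
\Phi(u,1,m^k)\yle\#\bigl(A\cap V_k^{(m)}\bigr)\yle(1+k)\,\Phi(u,1,m^k).
\]
By \eqref{Phi definition} with $r=1$ we have $p(1)=q(1)=0$, and we are in the first case. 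Hence
\[
\Phi(u,1,m^k)=(\#\La)^{k}\eta(u)^{q_k-k}.
\]
Since $q_k=\lfloor\tau k\rfloor$, it follows that $\log_m\Phi(u,1,m^k)/k\to\log_m\#\La+(\tau-1)\log_m\eta(u)$. Because $\tau\log_m\eta(u)=\log_n\eta(u)$, this limit equals $D_0(u)$. The polynomial factor $(1+k)$ contributes nothing in the limit.

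The next step is to pass from the windows $V_k^{(m)}$ to $[-h,h]^2$ at arbitrary real $h$. Given $h\ge1$, choose $k$ with $m^{k-1}\le 2h<m^k$. Then
\[
V_{k-2}^{(m)}\subseteq[-h,h]^2\cap\Z^2\subseteq V_{k+1}^{(m)},
\]
up to boundary effects, which only change indices by a bounded amount. Consequently $\#(A\cap[-h,h]^2)$ lies between $\#(A\cap V_{k-2}^{(m)})$ and $\#(A\cap V_{k+1}^{(m)})$. Both counts are $m^{kD_0(u)+o(k)}$, and $\log h=k\log m+O(1)$. Therefore $\log\#(A\cap[-h,h]^2)/\log h\to D_0(u)$. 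This gives $\dimLM A=\dimUM A=D_0(u)$ simultaneously, because the two-sided bound holds along every sequence of $h$ and not just along a subsequence.

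The only point requiring care is the validity of \eqref{packing estimate1} at $r=1$ for all sufficiently large $k$, and this is already part of Lemma~\ref{Hausdorff orbit transfer}, since $1\le r\le m^k$ is allowed. Alternatively one can argue directly from \eqref{Hausdorff model inclusions}: the cardinality $\#\mE_{k\pm C,q_{k\pm C}}(u)=(\#\La)^{k\pm C}\eta(u)^{q_{k\pm C}-k\mp C}$ gives the lower bound, and the union of $q_k+C$ translates gives the upper bound. This route avoids packing numbers altogether. I do not expect any genuine obstacle. The substantive work, namely that only the endpoint column $\col_0$ or $\col_{m-1}$ can continue to branch at the extra vertical levels, is already encoded in Lemma~\ref{centered window} and the model $\mE_{k,q_k}(u)$.
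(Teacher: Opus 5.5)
Your proposal is correct and follows essentially the paper's proof. The paper reads off $(\#\La)^k\eta(u)^{q_k-k}\yle\#(\fola(z)\cap V_k^{(m)})\yle(1+k)(\#\La)^k\eta(u)^{q_k-k}$ directly from the model cardinalities in \eqref{Hausdorff model inclusions}, which is your ``alternative'' route; your version via \eqref{packing estimate1} at $r=1$ is the same estimate repackaged, since that bound is derived from those inclusions, and your sandwich $V_{k-2}^{(m)}\subseteq[-h,h]^2\cap\Z^2\subseteq V_{k+1}^{(m)}$ spells out the window comparison the paper leaves implicit.
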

	\begin{proof}
		At the last depth the model consists of
		$(\#\La)^k\eta(u)^{q_k-k}$ singletons. The two inclusions in
		\eqref{Hausdorff model inclusions} therefore give
		\[
		(\#\La)^k\eta(u)^{q_k-k}
		\yle\#(\fola(z)\cap V_k^{(m)})
		\yle(1+k)(\#\La)^k\eta(u)^{q_k-k}.
		\]
		Taking logarithms and using $q_k=\tau k+O(1)$ yields
		\[
		\log\#(\fola(z)\cap V_k^{(m)})=kD_0(u)\log m+O_{m,n,\La,z}(\log(k+1)).
		\]
		Therefore
		\[
		\lim_{h\to+\infty}\f{\log\#(\fola(z)\cap[-h,h]^2)}{\log h}=\lim_{k\to\infty}\f{\log\#(\fola(z)\cap V_k^{(m)})}{k\log m}=D_0(u).
		\]
	\end{proof}
	
	\begin{proposition}\label{Beurling estimate}
		Uniformly in $k\ge1$,
		\[
		(\#\La)^k\Nm^{q_k-k}
		\yle_{m,n,\La,z}\sup_{w\in\R^2}\#(\fola(z)\cap Q(w,m^k))
		\yle_{m,n,\La,z}(1+k)(\#\La)^k\Nm^{q_k-k}.
		\]
		Consequently,
		\begin{equation}\label{Beurling}
			\dimbe A=\log_m\Big(\frac{\#\La}{\Nm}\Big)+\log_n\Nm.
		\end{equation}
	\end{proposition}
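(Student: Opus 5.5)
The proof splits into the two bounds, both derived from the block structure of Lemma~\ref{orbit model blocks}. After that, the dimension formula~\eqref{Beurling} follows by taking logarithms. Since $q_k=\tau k+O(1)$,
\[
\log\big((\#\La)^k\Nm^{q_k-k}\big)=k\log m\Big(\log_m\tfrac{\#\La}{\Nm}+\log_n\Nm\Big)+O(1),
\]
and the factor $(1+k)$ contributes only $O(\log k)$. Passing from the scales $h=m^k$ to general $h$ costs a bounded factor, as in Proposition~\ref{mass}: compare $Q(w,h)$ with $Q(w,m^{p(h)+1})$ and with $Q(w,m^{p(h)})$.

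For the upper bound, fix $w$ and decompose $\fola(z)\cap Q(w,m^k)$ according to word length $\ell$. There are three ranges.
\begin{itemize}
\item For $\ell<k$: by Lemma~\ref{orbit model blocks}(i), $\La_\ell(z)$ lies in a translate of $\mE_k[\mathbf0]$, which has $(\#\La)^k$ points.
\item For $k\le\ell\le q_k$: by the final assertion of Lemma~\ref{orbit model blocks}, $\La_\ell(z)\cap Q(w,m^k)$ is covered by $O_{m,n}(1)$ translated models whose tail alphabets have size at most $\Nm$. Each such model has at most $(\#\La)^k\Nm^{q_k-k}$ points by~\eqref{varying model cardinality}.
\item For $\ell>q_k$: the whole union over these lengths is likewise covered by $O_{m,n}(1)$ such models.
\end{itemize}
There are at most $q_k+1=O(1+k)$ values of $\ell$ in the first two ranges, each contributing $O((\#\La)^k\Nm^{q_k-k})$, and the last range contributes $O(1)$ models. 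Summing gives the bound $(1+k)(\#\La)^k\Nm^{q_k-k}$ up to constants.

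For the lower bound, pick a column $i^*$ with $N_{i^*}=\Nm$ and take $\mathbf i=(i^*,\dots,i^*)\in\mI^{q_k-k}$. By~\eqref{model orbit embedding}, $\fola(z)$ contains a translate of $\mE_k[\mbB^{\mathbf i}]$, and this set has $(\#\La)^k\Nm^{q_k-k}$ points by~\eqref{varying model cardinality}. It lies in $C(0,m^k)$ after translation, so it fits inside a single cube $Q(w,m^k)$ for a suitable $w$. This gives the supremum lower bound.

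The only delicate point is bookkeeping in the upper bound. The count of translated models meeting $Q(w,m^k)$ must be $O(1)$ per length $\ell$ uniformly in $w$ (and $O(1)$ in total for $\ell>q_k$), and every tail alphabet involved must have size at most $\Nm$. Both facts are exactly the concluding statements of Lemma~\ref{orbit model blocks}, so once the length decomposition is set up the main work is already done.
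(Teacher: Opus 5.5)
Your proposal is correct and follows the paper's proof essentially step by step: the upper bound decomposes by word length and applies Lemma~\ref{orbit model blocks}(i)--(iii) with the cardinality bound \eqref{varying model cardinality}, and the lower bound uses the embedded model $\mE_k[(\col_{i_*})_{\ell=k}^{q_k-1}]$ from part (iv) with $N_{i_*}=\Nm$. Your remark on comparing windows at the scales $m^{p(h)}$ and $m^{p(h)+1}$ spells out what the paper leaves implicit when it says ``taking logarithms''.
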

	\begin{proof}
		Fix a window $Q(w,m^k)$. By
		Lemma \ref{orbit model blocks}(i)--(ii), for each
		$1\le\ell\le q_k$ the set $\La_\ell(z)\cap Q(w,m^k)$ is
		contained in $O_{m,n}(1)$ translates of models whose tail
		alphabets have cardinality at most $\Nm$.
		The same statement holds for the combined set
		$\big(\bigcup_{\ell>q_k}\La_\ell(z)\big)\cap Q(w,m^k)$ by part (iii), independently of the
		number of terminal lengths represented there.
		Equation \eqref{varying model cardinality} bounds the cardinality
		of each containing model by $(\#\La)^k\Nm^{q_k-k}$. Hence
		\[
		\#(\fola(z)\cap Q(w,m^k))
		\yle_{m,n}(1+q_k)(\#\La)^k\Nm^{q_k-k}.
		\]
		This proves the upper bound
		uniformly in $w$. The factor $1+q_k$ counts the $q_k$ finite
		lengths and one combined contribution from all longer words.
		
		For the lower bound, choose $i_*\in\mI$ with $N_{i_*}=\Nm$.
		Lemma \ref{orbit model blocks}(iv) gives
		\[
		T_k+\mE_k\big[(\col_{i_*})_{\ell=k}^{q_k-1}\big]\subseteq \fola(z),\qquad
		T_k=\left(m^{q_k}u+i_*\sum_{\ell=k}^{q_k-1}m^\ell,\ n^{q_k}v\right).
		\]
		The translated model lies in $Q(w_k,m^k)$ for any
		$w_k\in T_k+\mE_k\big[(\col_{i_*})_{\ell=k}^{q_k-1}\big]$. This proves the lower bound.
		Taking logarithms, using $q_k=\tau k+O(1)$ proves \eqref{Beurling}.
	\end{proof}
	
	\begin{lemma}\label{count interpolation}
		Uniformly for $1\le r\le h$, $h\ge2$,
		\begin{equation}\label{uniform count exponents}
			\Big(\f hr\Big)^{\ud(u)}\yle_{m,n,\La}\Phi(u,r,h)
			\yle_{m,n,\La}\Big(\f hr\Big)^{\od(u)}.
		\end{equation}
		The two endpoint exponents occur at
		\begin{equation}\label{endpoint count exponents}
			\Phi(u,1,h)\asy h^{D_0(u)},\qquad
			\Phi(u,n^{p(h)},h)\asy
			\Big(\f h{n^{p(h)}}\Big)^{D_1(u)}.
		\end{equation}
	\end{lemma}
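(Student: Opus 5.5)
The plan is to take logarithms in \eqref{Phi definition} and to view $\log\Phi(u,r,h)$ as a piecewise linear function of the two variables $a=\log r$ and $b=\log h$. Write $A_0=\log(\#\La/\eta(u))$ and $A_1=\log N_\La$, so that the per-unit slopes are fixed. For the present argument we replace $p(r)$, $q(r)$, $p(h)$, $q(h)$ by $\log_m r$, $\log_n r$, $\log_m h$, $\log_n h$. This costs $O(1)$ in each exponent, hence only a multiplicative constant depending on $m,n,\La$. The same comparison, applied to the case $q(r)=p(h)$, is already recorded around \eqref{Phi comparable scales}.

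We then split into the two regimes of \eqref{Phi definition}. In the regime $q(r)\le p(h)$ we have, up to $O(1)$,
\[
\log\Phi=(\log_n r-\log_m r)A_1+(\log_m h-\log_n r)\log\#\La+(\log_n h-\log_m h)\log\eta(u).
\]
Regrouping with $\log\#\La=A_0+\log\eta(u)$ and $\tau=\log m/\log n$ expresses this as
\[
\log\Phi=(b-a)\Bigl(\frac{A_0}{\log m}+\frac{\log\eta(u)}{\log n}\Bigr)
+\Bigl(\frac{a}{\log n}-\frac{a}{\log m}\Bigr)(A_1-A_0).
\]
The first term is $(b-a)D_0(u)$. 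The correction term is $a(1/\log n-1/\log m)(A_1-A_0)$, and the constraint $q(r)\le p(h)$ gives $0\le a(1/\log n-1/\log m)\le b-a$. So $\log\Phi$ is $(b-a)$ times a convex combination $\theta D_1(u)+(1-\theta)D_0(u)$ with $\theta\in[0,1]$, since $D_1-D_0=(A_1-A_0)/\log m$. The coefficient works out to exactly that once the normalisations are checked.

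In the regime $p(h)\le q(r)$ we have $\log\Phi=(\log_m h-\log_m r)A_1+(\log_n h-\log_n r)\log\eta(u)=(b-a)D_1(u)$ exactly, up to $O(1)$. In both regimes $\Phi\asy (h/r)^{\theta D_1+(1-\theta)D_0}$, and this lies between $(h/r)^{\ud(u)}$ and $(h/r)^{\od(u)}$ because $h/r\ge1$. That proves \eqref{uniform count exponents}.

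For \eqref{endpoint count exponents}, take $r=1$, where $a=0$ and therefore $\theta=0$, which gives $h^{D_0(u)}$. Take $r=n^{p(h)}$. Then $q(r)=p(h)$, so we sit on the boundary between the two regimes, and the second formula gives exactly $N_\La^{p(h)-p(r)}\eta(u)^{q(h)-q(r)}$. Comparing this with $(h/r)^{D_1(u)}$ again costs only $O(1)$ in the exponents.

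The step most likely to cause trouble is the bookkeeping that converts the floor-function exponents into real logarithms uniformly. The errors must remain bounded even though $\eta(u)$, $N_\La$ and $\#\La$ enter with exponents of size up to $\tau k$. Since every floor error is bounded by $1$ and each base is at most $mn$, this should give only a constant factor.
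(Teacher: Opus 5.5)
Your proposal is correct and is essentially the paper's argument. The paper likewise takes logarithms, trades the floors for real logarithms at bounded cost, and writes $\log\Phi(u,r,h)$ as $\log(h/r)$ times a convex combination of $D_0(u)$ and $D_1(u)$. The only difference is that it merges your two regimes into the single identity $\Phi=N_\La^{p(h)-p(r)}\eta(u)^{q(h)-q(r)}\bigl(\#\La/(N_\La\eta(u))\bigr)^{(p(h)-q(r))_+}$, with $T=(\log_m h-\log_n r)_+\in[0,\log_m(h/r)]$ as the interpolation parameter.

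One normalisation needs fixing. The constraint $q(r)\le p(h)$ gives $0\le a(\tau-1)<b-a+\log m$, not $a(1/\log n-1/\log m)\le b-a$. Hence $\theta=a(\tau-1)/(b-a)$, and any excess of $\theta$ over $1$ at the floor boundary changes $\log\Phi$ only by a bounded amount.
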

	\begin{proof}
		We first rewrite the definition of $\Phi$ as
		\[
		\Phi(u,r,h)
		=
		N_\Lambda^{p(h)-p(r)}
		\eta(u)^{q(h)-q(r)}
		\left(\frac{\#\Lambda}{N_\Lambda\eta(u)}\right)^{
			(p(h)-q(r))_+}.
		\]
		Indeed, this follows from the identities
		\[
		\begin{aligned}
			\min\{q(r),p(h)\}-p(r)
			&=p(h)-p(r)-(p(h)-q(r))_+,\\
			q(h)-\max\{p(h),q(r)\}
			&=q(h)-q(r)-(p(h)-q(r))_+.
		\end{aligned}
		\]
		
		Put
		\[
		L=\log_m(h/r),
		\qquad
		T=(\log_m h-\log_n r)_+.
		\]
		Since $m>n$ and $1\le r\le h$, we have
		$\log_n r\ge\log_m r$ and therefore
		\[
		0\le T\le \log_m h-\log_m r=L.
		\]
		The definitions of $p$ and $q$ give
		\begin{equation}\label{L}
			\bigl|p(h)-p(r)-L\bigr|<1,\qquad
			\bigl|q(h)-q(r)-\tau L\bigr|<1.
		\end{equation}
		Furthermore, the fact that $x\mapsto x_+$ is $1$-Lipschitz and $\bigl|p(h)-q(r)-(\log_m h-\log_n r)\bigr|<1$ give that
		\begin{equation}\label{T}
			\bigl|(p(h)-q(r))_+-T\bigr|<1.
		\end{equation}

		Taking logarithms in the preceding expression for $\Phi$, together with \eqref{L} and \eqref{T}
		now yield
		\[
		\begin{aligned}
			\log\Phi(u,r,h)&=
			L\log N_\Lambda+\tau L\log\eta(u)
			+T\log\Big(\frac{\#\Lambda}{N_\Lambda\eta(u)}\Big)
			+O_{m,n,\Lambda}(1)\\
			&=
			\bigl(LD_1(u)+T(D_0(u)-D_1(u))\bigr)\log m
			+O_{m,n,\Lambda}(1)\\
			&=
			\bigl((L-T)D_1(u)+TD_0(u)\bigr)\log m
			+O_{m,n,\Lambda}(1).
		\end{aligned}
		\]
		The error is uniform in $r$ and $h$.
		Since $L\ge T\ge 0$ ,
		\[
		L\underline D(u)
		\le
		(L-T)D_1(u)+TD_0(u)
		\le
		L\overline D(u).
		\]
		Exponentiating and using $m^L=h/r$ gives
		\[
		\left(\frac hr\right)^{\underline D(u)}
		\yle_{m,n,\Lambda}
		\Phi(u,r,h)
		\yle_{m,n,\Lambda}
		\left(\frac hr\right)^{\overline D(u)}.
		\]
		
		It remains to verify the two endpoint estimates.
		At $r=1$, we have $L=T=\log_m h$, so the logarithmic
		identity above becomes
		\[
		\log\Phi(u,1,h)
		=
		D_0(u)\log h+O_{m,n,\Lambda}(1).
		\]
		Hence $\Phi(u,1,h)\asy_{m,n,\Lambda}h^{D_0(u)}$.
		
		For the second endpoint, put $r_*=n^{p(h)}$.
		Then
		\[
		1\le r_*\le m^{p(h)}\le h,
		\qquad
		q(r_*)=p(h).
		\]
		Substituting into the definition of $\Phi$ gives
		\[
		\Phi(u,r_*,h)
		=
		N_\Lambda^{p(h)-p(r_*)}
		\eta(u)^{q(h)-q(r_*)}.
		\]
		Applying the same floor estimates, we obtain
		\[
		\begin{aligned}
			\log\Phi(u,r_*,h)
			&=
			\log_m(h/r_*)\log N_\Lambda
			+\log_n(h/r_*)\log\eta(u)
			+O_{m,n,\Lambda}(1)\\
			&=
			D_1(u)\log(h/r_*)+O_{m,n,\Lambda}(1).
		\end{aligned}
		\]
		Therefore
		\[
		\Phi(u,n^{p(h)},h)
		\asy_{m,n,\Lambda}
		\left(\frac{h}{n^{p(h)}}\right)^{D_1(u)},
		\]
		as required.
	\end{proof}

	\begin{proposition}\label{packing}
		For every $z=(u,v)\in\Z^2$, $\dimdP\fola(z)=\od(u)$.
	\end{proposition}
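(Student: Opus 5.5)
We work with the window formulation of Proposition~\ref{base independence}(ii) in base $t=m$. It then suffices to show two things. First, for $\alpha>\od(u)$ and every $\eps\in(0,1)$, the quantity $\mathcal P^{V,m}_{\alpha,\eps}(k)\to0$. Second, for $\alpha<\od(u)$ there is some $\eps$ for which this quantity does not tend to zero. Both bounds will come from \eqref{packing estimate1}, which gives
\[
\Phi(u,r,m^k)\yle P_r(\fola(z)\cap V_k^{(m)})\yle(1+k)\Phi(u,r,m^k)
\]
uniformly for $1\le r\le m^k$, combined with Lemma~\ref{count interpolation} applied at $h=m^k$.

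For the upper bound, fix $\alpha>\od(u)$ and $\eps\in(0,1)$. For $1\le r\le m^{k(1-\eps)}$, inequality \eqref{uniform count exponents} gives
\[
\Big(\frac r{m^k}\Big)^\alpha P_r(\fola(z)\cap V_k^{(m)})\yle(1+k)\Big(\frac r{m^k}\Big)^{\alpha-\od(u)}\le(1+k)m^{-\eps k(\alpha-\od(u))}.
\]
The right-hand side tends to $0$. Hence $\dimdP\fola(z)\le\alpha$, and letting $\alpha\downarrow\od(u)$ gives $\dimdP\fola(z)\le\od(u)$.

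For the lower bound, fix $\alpha<\od(u)$ and split into cases according to which exponent attains the maximum.

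If $\od(u)=D_0(u)$, take $r=1$, which is admissible for every $\eps$. The endpoint estimate \eqref{endpoint count exponents} gives $m^{-\alpha k}\Phi(u,1,m^k)\asy m^{k(D_0(u)-\alpha)}\to\infty$.

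If $\od(u)=D_1(u)$, take $r=n^{k}$, using $p(m^k)=k$. This scale is admissible once $\eps<1-1/\tau$, since then $n^k=m^{k/\tau}\le m^{k(1-\eps)}$. The second endpoint estimate gives
\[
\Big(\frac{n^k}{m^k}\Big)^{\alpha}\Phi(u,n^k,m^k)\asy\Big(\frac{m^k}{n^k}\Big)^{D_1(u)-\alpha}\to\infty,
\]
because $m>n$.

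In either case, for every sufficiently small $\eps$ the maximum over admissible $r$ is unbounded. Since these quantities decrease as $\eps$ increases (the range of $r$ shrinks), the convergence to $0$ fails for all small $\eps$. This means the defining condition "for each $\eps$" fails, so $\dimdP\fola(z)\ge\alpha$. Letting $\alpha\uparrow\od(u)$ gives $\dimdP\fola(z)\ge\od(u)$.

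The main point to check is the admissibility of the scale $r=n^k$ in the $D_1$ case, where $\tau>1$ is used. One should also confirm that the logarithmic factor $(1+k)$ in \eqref{packing estimate1} is harmless; it is, since it is beaten by the geometric decay in the upper bound and plays no role in the lower bound.
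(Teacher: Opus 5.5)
Your proposal is correct and is essentially the paper's proof: the window formulation in base $m$ via Proposition~\ref{base independence}(ii), the two-sided bound \eqref{packing estimate1} together with \eqref{uniform count exponents} for the upper bound, and the test scales $r=1$ (when $\od(u)=D_0(u)$) and $r=n^k$ with $\eps<1-1/\tau$ (when $\od(u)=D_1(u)$) for the lower bound. The only thing to add is the degenerate case $\od(u)=0$: there is then no $\alpha\ge0$ with $\alpha<\od(u)$, and the lower bound follows directly from nonnegativity, as the paper notes.
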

	\begin{proof}
		By Proposition \ref{base independence}(ii), we use the window
		definition with $t=m$.
		If $\alpha>\od(u)$ and $\varepsilon\in(0,1)$, the centered-window
		bound \eqref{packing estimate1} and
		\eqref{uniform count exponents} imply
		\[
		\max_{1\le r\le m^{k(1-\varepsilon)}}
		\left(\frac r{m^k}\right)^\alpha P_r(\fola(z)\cap V_k^{(m)})
		\yle(1+k)m^{-\varepsilon k(\alpha-\od(u))}\longrightarrow0.
		\]
		This proves $\dimdP\fola(z)\le\od(u)$.
		
		If $\od(u)=0$, nonnegativity completes the proof. Otherwise fix
		$0\le\alpha<\od$ and choose $0<\varepsilon_*<1-1/\tau$.
		The lower bound in \eqref{packing estimate1} and the endpoint
		counts \eqref{endpoint count exponents} hold in every sufficiently
		large window. If $\od=D_0$, take $r=1$ to obtain
		\[
		\left(\frac1{m^k}\right)^\alpha P_1(\fola(z)\cap V_k^{(m)})
		\yge m^{k(D_0-\alpha)}\longrightarrow\infty.
		\]
		If $\od=D_1$, take $r=n^k$, which is admissible because
		$n^k<m^{k(1-\varepsilon_*)}$. Then
		\[
		\left(\frac{n^k}{m^k}\right)^\alpha P_{n^k}(\fola(z)\cap V_k^{(m)})
		\yge(m^k/n^k)^{D_1-\alpha}
		=m^{k(1-1/\tau)(D_1-\alpha)}\longrightarrow\infty.
		\]
		This proves the reverse inequality.
	\end{proof}
	
	\begin{proposition}\label{assouad estimate}
		For $1\le r\le R$,
		\begin{equation}\label{moving window bound}
			\sup_{w\in\mathbb R^2}N_r(\fola(z)\cap Q(w,R))
			\yle_{m,n,\Lambda,z}
			\bigl(1+p(R)-p(r)+q(R)-q(r)\bigr)
			N_\Lambda^{p(R)-p(r)}
			N_{\max}^{q(R)-q(r)}.
		\end{equation}
	\end{proposition}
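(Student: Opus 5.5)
Set $k=p(R)+1$, so that $R<m^k\le mR$. A square $Q(w,R)$ lies in $Q(w,m^k)$, so it suffices to bound $N_r(\fola(z)\cap Q(w,m^k))$ for $1\le r\le R$. By the final assertion of Lemma~\ref{orbit model blocks}, the pieces $\La_\ell(z)\cap Q(w,m^k)$ for $k\le\ell\le q_k$, together with the combined set $\bigl(\bigcup_{\ell>q_k}\La_\ell(z)\bigr)\cap Q(w,m^k)$, are each contained in $O_{m,n}(1)$ translated models $\mE_k[\mbB]$. Every tail alphabet of these models has cardinality at most $\Nm$. Lemma~\ref{orbit model blocks}(i) handles the short words $\ell<k$, placing them in translates of $\mE_k[\mathbf0]$, whose tail alphabets $\{0\}$ also satisfy the bound. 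For each model, \eqref{model maximal fiber bound} gives
\[
N_r(\mE_k[\mbB])\yle_{m,n}N_\La^{k-p(r)}\Nm^{q_k-q(r)}.
\]
Since $k-p(R)=1$ and $q_k-q(R)=O_{m,n}(1)$, the right side is at most a constant times $N_\La^{p(R)-p(r)}\Nm^{q(R)-q(r)}$. Covering numbers are translation invariant and subadditive, so this bound transfers to each orbit piece.

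The difficulty is the number of pieces. Summing naively over all lengths gives a factor of about $q_k$, that is, of order $\log R$. The statement instead has the factor $1+p(R)-p(r)+q(R)-q(r)$, which stays bounded when $r$ is comparable to $R$. The plan is therefore to group lengths at the scale $r$ rather than at $R$.

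Put $k'=p(r)$. Words whose lengths $\ell$ differ only in levels finer than $r$ should fall into a single model at scale $r$. Concretely, apply Lemma~\ref{orbit model blocks}(iii) at level $k'$ to all words of length $\ell>q(r)$ at once, using the grid cells $G^{(k')}_{U,V}$ of size about $r$. For each such cell, the points of $\bigcup_{\ell>q_{k'}}\La_\ell(z)$ lying in it form one translated model $\mE_{k'}[\mbB^U]$, and that model is covered by $O(1)$ squares of side $r$. Hence all long lengths together contribute at most a constant times the number of $r$-cells meeting $\fola(z)\cap Q(w,R)$ through such words.

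The remaining task is to count these cells. The horizontal digits at levels from $p(r)$ up to $p(R)$ range over $\mI$, giving $N_\La^{p(R)-p(r)}$ choices. The vertical digits at levels from $q(r)$ up to $q(R)$ lie in column fibres, or are pinned by the coarser structure, giving at most $\Nm^{q(R)-q(r)}$ choices. I expect to make this count precise by applying part~(iii) at level $k$ to fix the coarse structure, and then counting refinements as in the proof of Lemma~\ref{model descendants}.

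The intermediate lengths $q(r)<\ell\le q(R)+O(1)$ number at most $q(R)-q(r)+O(1)$. Each is handled by part~(ii) and \eqref{model maximal fiber bound} as above, and each contributes the main term. The lengths $\ell\le q(r)$ are all absorbed into the long-word treatment at level $k'$.

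Adding the three contributions, namely the long-word group, the intermediate lengths, and the short words, gives the claimed factor $\bigl(1+p(R)-p(r)+q(R)-q(r)\bigr)$ times $N_\La^{p(R)-p(r)}\Nm^{q(R)-q(r)}$.

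The main obstacle is the bookkeeping in this grouping. One has to check that lengths at most $q(r)$, or more generally all words agreeing down to scale $r$, really collapse to a bounded number of $r$-squares per cell. One also has to check that the cell count does not pick up an extra $\#\La$ factor from the mixed levels between $q(r)$ and $p(R)$; this uses $\#\La\le N_\La\Nm$, as in Lemma~\ref{varying model counts}. If the grouping turns out to be delicate, a fallback is to write $\ell$ as ranging over $[q(r),q(R)+C]$ plus one combined piece for all longer words and one for all shorter words. The shorter-word piece is then controlled by a translate of $\mE_{q(r)}$-type models at scale $r$.
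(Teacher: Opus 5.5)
There is a genuine gap in how you handle the words of length $\ell\le q(r)$. Your first paragraph is fine and matches how the paper starts: with $k=p(R)+1$, parts (i)--(iii) of Lemma~\ref{orbit model blocks} and \eqref{model maximal fiber bound} bound each single length, and all lengths $\ell>q_k$ together, by $O(M)$ with $M=N_\La^{k-p(r)}\Nm^{q_k-q(r)}$. The problem is your claim that the short words are ``absorbed into the long-word treatment at level $k'$''. Lemma~\ref{orbit model blocks}(iii) at level $k'=p(r)$ describes only $\bigcup_{\ell>q_{k'}}\La_\ell(z)$, where $q_{k'}=q(m^{p(r)})\le q(r)$, so it says nothing about shorter words. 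Short-word points are also not near long-word points in general: if $u\ge1$, the layers $\La_\ell(z)$ lie in the pairwise disjoint strips $[m^\ell u,m^\ell(u+1))\times\Z$. Treating these lengths one at a time, as in your first paragraph, costs a factor of order $q(r)$. This factor is unbounded even when $R=mr$, where the target factor $1+p(R)-p(r)+q(R)-q(r)$ stays bounded, so it would not give the uniform bound needed in Corollary~\ref{assouad}. The fallback via ``$\mE_{q(r)}$-type models'' does not supply an argument. Note also that your tally of contributions never produces the summand $p(R)-p(r)$.

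The missing observation is that these layers are thin at scale $r$, which lets the paper group them. For $\ell\le p(r)$, every point of $\La_\ell(z)$ has $|x|\le(|u|+1)r$ and $|y|\le(|v|+1)r$, so all these layers together need only $O_z(1)$ squares. For $k<\ell\le q(r)$, one has $|y|\le(|v|+1)n^\ell\le(|v|+1)r$. Moreover $\lfloor x/m^{p(r)}\rfloor=m^{k-p(r)}\lfloor x/m^k\rfloor+\sum_{d=p(r)}^{k-1}i_dm^{d-p(r)}$ takes at most $3N_\La^{k-p(r)}$ values in the window, independently of $\ell$. Hence all these lengths together need $O_z(N_\La^{k-p(r)})\le O_z(M)$ squares. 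Only the lengths $p(r)<\ell\le k$ and $\max\{k,q(r)\}<\ell\le q_k$ are treated individually, and this is exactly what produces the factor $1+(k-p(r))+(q_k-q(r))$.

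Separately, your claim that all lengths $\ell>q(r)$, grouped in the cells $G^{(k')}_{U,V}$, meet only $O(N_\La^{p(R)-p(r)}\Nm^{q(R)-q(r)})$ cells is false. For $\ell\le q_k$, the vertical digits above level $\ell$ are pinned by $n^\ell v$, and the fibres above level $k$ depend on $\ell$, so different lengths occupy different $r$-cells. Take $m=3$, $n=2$, $\La=\{(0,1),(1,0)\}$, $z=(0,0)$, $R=3^k$ and $r=3^{k'}$ with $q_{k'}\ge k$. For each $q_{k'}+3\le\ell\le q_k$, the words with $i_d=0$ for $k\le d<\ell$ fill a band $2^\ell-2^k\le y<2^\ell$ of a suitable window. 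Each band needs at least $2^{k-k'-1}$ squares, and the bands are more than $r$ apart. This gives $N_r\ge2^{k-k'-1}(q_k-q_{k'}-2)$, whereas $N_\La^{k-k'}\Nm^{q_k-q_{k'}}=2^{k-k'}$. Since you also treat the intermediate lengths individually, the fix is simply to restrict the grouped piece to $\ell>q_k$.
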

	
	\begin{proof}
		Fix $w\in\mathbb R^2$.  Choose $	k=\max\{1,\lceil\log_m R\rceil\}$ and write
		\[
		M=N_\Lambda^{k-p(r)}N_{\max}^{q_k-q(r)}.
		\]
		Then $r\le R\le m^k\le mR$.
		By \eqref{model maximal fiber bound}, every model appearing
		in Lemma~\ref{orbit model blocks} has covering number at most
		a constant multiple of $M$ at scale $r$.
		Since $Q(w,R)\subseteq Q(w,m^k)$, parts (i)--(iii) of that
		lemma give
		\begin{equation}\label{under q_k and above q_k}
			N_r(\Lambda_\ell(z)\cap Q(w,R))\yle_{m,n}M\
			(1\le\ell\le q_k),\quad N_r\bigg(\Big(\bigcup_{\ell>q_k}\Lambda_\ell(z)\Big)\cap Q(w,R)\bigg)\yle_{m,n}M.
		\end{equation}
		We estimate two ranges of the remaining lengths collectively.
		
		First, if $\ell\le p(r)$, every $(x,y)\in\Lambda_\ell(z)$
		satisfies
		\[
		|x|\le(|u|+1)m^\ell\le(|u|+1)r,\qquad|y|\le(|v|+1)n^\ell\le(|v|+1)r.
		\]
		Thus
		\[
		N_r\bigg(\bigcup_{1\le\ell\le p(r)}\Lambda_\ell(z)\bigg)
		\yle_z 1\le M.
		\]
		
		Next suppose that $q(r)>k$, and consider all lengths
		$k<\ell\le q(r)$. Their vertical coordinates satisfy
		\[
		|y|\le(|v|+1)n^\ell\le(|v|+1)r.
		\]
		Within $Q(w,R)$, the integer
		$U=\lfloor x/m^k\rfloor$ takes at most three values.
		For each such $U$, the horizontal expansion gives
		\[
		\left\lfloor\frac{x}{m^{p(r)}}\right\rfloor=m^{k-p(r)}U+\sum_{d=p(r)}^{k-1}i_d m^{d-p(r)}.
		\]
		Since $i_d\in\mathcal I$, the sum takes at most
		$N_\Lambda^{k-p(r)}$ values, independently of $\ell$.
		Each value of the quotient determines a horizontal interval
		of length $m^{p(r)}\le r$. Its product with
		$[-(|v|+1)r,(|v|+1)r]$ can be covered by $O_z(1)$ squares
		of side $r$. Consequently,
		\[
		N_r\bigg(\Big(\bigcup_{k<\ell\le q(r)}\Lambda_\ell(z)\Big)\cap Q(w,R)\bigg)\yle_z N_\Lambda^{k-p(r)}\le M.
		\]
		
		The lengths not yet included lie in
		\[
		p(r)<\ell\le k,\qquad\max\{k,q(r)\}<\ell\le q_k.
		\]
		There are at most $k-p(r)+q_k-q(r)$ such lengths, and
		each has covering number $O_{m,n}(M)$ by \eqref{under q_k and above q_k}. Subadditivity therefore yields
		\[
		N_r(\fola(z)\cap Q(w,R))
		\yle_{m,n,z}
		\bigl(1+k-p(r)+q_k-q(r)\bigr)
		N_\Lambda^{k-p(r)}N_{\max}^{q_k-q(r)}.
		\]
		
		Finally, the choice of $k$ gives
		\[
		0\le k-p(R)\le1,
		\qquad
		0\le q_k-q(R)\le\lceil\tau\rceil.
		\]
		Replacing $k$ and $q_k$ in the preceding upper bound by
		$p(R)$ and $q(R)$ therefore changes it by at most a fixed
		multiplicative constant. All estimates are uniform in $w$,
		so taking the supremum proves \eqref{moving window bound}.
	\end{proof}
	
	\begin{corollary}\label{assouad}
		Let $m>n$. For every $z\in\Z^2$,
		$\dima\fola(z)=\log_m N_\La+\log_n\Nm$.
	\end{corollary}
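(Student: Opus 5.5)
The corollary has two halves: an upper bound from the moving-window estimate \eqref{moving window bound}, and a lower bound from the embedded models of Lemma~\ref{orbit model blocks}(iv) together with the all-scale count \eqref{varying model all scales}. Write $D_A=\log_m N_\La+\log_n\Nm$.

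\emph{Upper bound.} Fix $x\in A$ and $1\le r<R$. Since $B(x,R)\subseteq Q(x,R)$, Proposition~\ref{assouad estimate} gives
\[
N_r(A\cap B(x,R))\yle \bigl(1+p(R)-p(r)+q(R)-q(r)\bigr)N_\La^{p(R)-p(r)}\Nm^{q(R)-q(r)}.
\]
The floor estimates in \eqref{L} give $p(R)-p(r)=\log_m(R/r)+O(1)$ and $q(R)-q(r)=\log_n(R/r)+O(1)$. Hence
\[
N_\La^{p(R)-p(r)}\Nm^{q(R)-q(r)}\asy (R/r)^{D_A}.
\]
The prefactor is $O(1+\log(R/r))$, so it is at most $C_\eps(R/r)^{\eps}$ for every $\eps>0$. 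Therefore every $\alpha>D_A$ is admissible in the definition of $\dima$, which gives $\dima A\le D_A$.

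\emph{Lower bound.} Choose $i_*\in\mI$ with $N_{i_*}=\Nm$, and for each $k$ let $\mathbf i=(i_*,\dots,i_*)$. By Lemma~\ref{orbit model blocks}(iv), the translate $T_k+\mE_k[\mbB^{\mathbf i}]$ lies in $A$, and its tail alphabets are all $\col_{i_*}$. The model sits in $C(0,m^k)$ after translation, so for any point $x$ of this translate we have
\[
T_k+\mE_k[\mbB^{\mathbf i}]\subseteq B(x,2m^k)\cap A.
\]
Take $R=2m^k$ and $r=n^{s}$ with $s\le k$, so that $q(r)=s\le k$. Formula \eqref{varying model all scales} then gives
\[
N_r\bigl(\mE_k[\mbB^{\mathbf i}]\bigr)\asy N_\La^{\,s-p(r)}(\#\La)^{k-s}\Nm^{\,q_k-k}.
\]
This still mixes in $\#\La$, so it does not yet isolate $N_\La$. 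A cleaner choice is $r$ with $q(r)\ge k$, i.e. $r\ge n^k$, and $r\le m^k$. In that range \eqref{varying model all scales} becomes
\[
N_r\asy N_\La^{k-p(r)}\Nm^{q_k-q(r)}\asy (m^k/r)^{D_A}.
\]

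\emph{Main obstacle.} The ratio $R/r$ must tend to infinity inside the window $n^k\le r\le m^k$. Taking $r=n^k$ gives $R/r\asy (m/n)^k\to\infty$ and
\[
N_r\bigl(A\cap B(x,R)\bigr)\yge (R/r)^{D_A}.
\]
If $\alpha<D_A$ were admissible, we would need $C(R/r)^\alpha\yge (R/r)^{D_A}$ along this sequence, which is impossible. Hence $\dima A\ge D_A$.

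The only bookkeeping left is to confirm that $x\in A$ can be taken inside the embedded model, which holds by construction, and that the constants in \eqref{varying model all scales} are uniform in $k$. That lemma states both, so I expect the argument to close without difficulty.
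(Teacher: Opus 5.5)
Your proposal is correct and follows essentially the same route as the paper: the upper bound comes from Proposition~\ref{assouad estimate}, with the logarithmic prefactor absorbed into $(R/r)^{\alpha}$. The lower bound uses the embedded maximal-column model from Lemma~\ref{orbit model blocks}(iv) at the scales $r=n^k$ and $R=2m^k$, where \eqref{varying model all scales} gives $N_\La^{k-p(n^k)}\Nm^{q_k-k}\asy(m/n)^{kD_A}$; your exploratory detour through $r=n^s$ with $s<k$ is unnecessary but harmless.
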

	\begin{proof}
		Put $D_A=\log_m N_\La+\log_n\Nm$. Since
		\[
		p(R)-p(r)\le\log_m\Big(\f Rr\Big)+1,\qquad q(R)-q(r)\le\log_n\Big(\f Rr\Big)+1,
		\]
		Proposition \ref{assouad estimate} gives, for every $\alpha>0$,
		\[
		\sup_w N_r(\fola(z)\cap Q(w,R))
		\yle\Big(1+\log\Big(\f Rr\Big)\Big)\Big(\f Rr\Big)^{D_A}
		\yle_\alpha\Big(\f Rr\Big)^{D_A+\alpha}.
		\]
		Euclidean balls are contained in these squares, so $\dima A\le D_A$.
		
		For the lower bound, use the model
		$\mE_k[(\col_{i_*})_{a=k}^{q_k-1}]$ with
		$N_{i_*}=\Nm$. By Lemma \ref{orbit model blocks}(iv), a
		translate of this model lies in $\fola(z)\cap Q(w_k,m^k)$ for
		some $w_k\in \fola(z)$, and hence in $\fola(z)\cap B(w_k,2m^k)$.
		Since $q(n^k)=k$,
		\eqref{varying model all scales} gives
		\[
		N_{n^k}\bigl(\fola(z)\cap B(w_k,2m^k)\bigr)
		\ge N_{n^k}\big(\mE_k[(\col_{i_*})_{a=k}^{q_k-1}]\big)
		\asy_{m,n}N_\La^{k-p(n^k)}\Nm^{q_k-k}\asy\Big(\f mn\Big)^{kD_A}.
		\]
		Since $m>n$, this establishes the lower bound.
	\end{proof}

	\section[Discrete Hausdorff dimensions via entropy and pressure]{Discrete Hausdorff dimensions via entropy and pressure}\label{section hausdorff}
	
	Throughout this section, we assume that $m>n$. We retain the finite model and its depth-$s$ partitions from Section \ref{section model}. Put $c_u=\log\eta(u)$ and define
	\[
	\Delta(u)=\frac{1}{\log m}\min_{0\le\lambda\le1}
	\left\{\log\sum_{i\in\mI}N_i^\lambda+(\tau-\lambda)c_u\right\}.
	\]

	\subsection[Pressure and entropy variational formulae]{Pressure and entropy variational formulae}
	We first express $\Delta(u)$ in terms of topological pressure. Let
	$\N_0=\{0,1,2,\ldots\}$ and equip the full shift
	$\Sigma=\mI^{\N_0}$ with the product topology. Let
	$T\colon\Sigma\to\Sigma$ be the left shift,
	\[
	T(i_0i_1i_2\ldots)=i_1i_2i_3\ldots.
	\]
	Denote by $\mathcal M(\Sigma,T)$ the set of $T$-invariant Borel
	probability measures on $\Sigma$, and by $h_\mu(T)$ the
	measure-theoretic entropy of $\mu\in\mathcal M(\Sigma,T)$.
	
	For a continuous potential $f\colon\Sigma\to\R$, recall the
	variational principle for topological pressure:
	\[
	P(T,f)
	=
	\sup_{\mu\in\mathcal M(\Sigma,T)}
	\left\{
	h_\mu(T)+\int_\Sigma f\,d\mu
	\right\}.
	\]
	
	Let $\mathscr P(\mI)$ denote the set of probability vectors on
	$\mI$. For
	$\bs=(\sigma_i)_{i\in\mI}\in\mathscr P(\mI)$, write
	\[
	h(\bs)=-\sum_{i\in\mI}\sigma_i\log\sigma_i,
	\qquad
	N(\bs)=\sum_{i\in\mI}\sigma_i\log N_i,
	\]
	with the convention $0\log0=0$.
	
	We first relate $h_\mu(T)$ to the entropy of its one-symbol
	distribution. Let
	\[
	\mathcal C=\{[i]:i\in\mI\},
	\qquad
	[i]=\{\bi=i_0i_1\ldots\in\Sigma:i_0=i\}.
	\]
	For $\mu\in\mathcal M(\Sigma,T)$, set
	$\sigma_i=\mu([i])$ and write
	$\bs=(\sigma_i)_{i\in\mI}$. Since $\mathcal C$ is a generating partition,
	\[
	h_\mu(T)=\lim_{n\to\infty}\frac1nH_\mu\left(
	\bigvee_{\ell=0}^{n-1}T^{-\ell}\mathcal C
	\right).
	\]
	By the subadditivity of entropy and the $T$-invariance of $\mu$,
	\[
	H_\mu\left(\bigvee_{\ell=0}^{n-1}T^{-\ell}\mathcal C
	\right)
	\le\sum_{\ell=0}^{n-1}
	H_\mu(T^{-\ell}\mathcal C)=nH_\mu(\mathcal C)=nh(\bs).
	\]
	Consequently, $h_\mu(T)\le h(\bs)$. Equality holds for the
	Bernoulli measure $\bs^{\otimes\N_0}$.
	
	Define
	\[
	g(\bi)=\log N_{i_0},
	\qquad
	\bi=i_0i_1\ldots\in\Sigma,
	\]
	and, for $0\le\lambda\le1$, put
	\[
	f_{u,\lambda}
	=
	\lambda g+(\tau-\lambda)c_u.
	\]
	Since $f_{u,\lambda}$ depends only on the first symbol, it is
	continuous on $\Sigma$. Moreover,
	\[
	\int_\Sigma f_{u,\lambda}\,d\mu=
	\lambda N(\bs)+(\tau-\lambda)c_u.
	\]
	It follows from the variational principle and the preceding entropy
	estimate that
	\begin{equation}\label{variational principle}
		\begin{aligned}
			P(T,f_{u,\lambda})
			&=\sup_{\mu\in\mathcal M(\Sigma,T)}
			\left\{
			h_\mu(T)+\int_\Sigma f_{u,\lambda}\,d\mu
			\right\}\\
			&=\max_{\bs\in\mathscr P(\mI)}
			\left\{
			h(\bs)+\lambda N(\bs)
			+(\tau-\lambda)c_u
			\right\}.
		\end{aligned}
	\end{equation}
	Indeed, the upper bound follows from $h_\mu(T)\le h(\bs)$,
	while every probability vector $\bs$ is realized by the Bernoulli
	measure $\mu=\bs^{\otimes\N_0}$, for which equality holds.
	
	To evaluate this maximum, set
	\begin{equation}\label{equality}
		Z_\lambda=\sum_{i\in\mI}N_i^\lambda,
		\qquad
		w_i^{(\lambda)}
		=
		\frac{N_i^\lambda}{Z_\lambda},
		\qquad
		\mathbf w^{(\lambda)}
		=
		(w_i^{(\lambda)})_{i\in\mI}.
	\end{equation}
	For every $\bs\in\mathscr P(\mI)$,
	\begin{equation}\label{entropy inequality}
		h(\bs)+\lambda N(\bs)=
		\sum_{i\in\mI}
		\sigma_i\log\frac{N_i^\lambda}{\sigma_i}=
		\log Z_\lambda-
		\sum_{i\in\mI}
		\sigma_i
		\log\frac{\sigma_i}{w_i^{(\lambda)}}
		\le
		\log Z_\lambda,
	\end{equation}
	where the last inequality follows from the non-negativity of
	relative entropy. Equality holds precisely when
	$\bs=\mathbf w^{(\lambda)}$.
	We therefore obtain the explicit pressure formula
	\[
	P(T,f_{u,\lambda})=\log Z_\lambda+(\tau-\lambda)c_u.
	\]
	In particular,
	\begin{equation}\label{minmax}
		\Delta(u)=\frac1{\log m}\min_{0\le\lambda\le1}P(T,f_{u,\lambda}).
	\end{equation}
	Since $\lambda\mapsto P(T,f_{u,\lambda})$ is continuous on
	$[0,1]$, there exists a minimizing parameter
	$\lambda_u\in[0,1]$. Fix any such parameter and set
	\[
	f_u=f_{u,\lambda_u},
	\qquad
	\mu_u=
	\bigl(\mathbf w^{(\lambda_u)}\bigr)^{\otimes\N_0}.
	\]
	Then
	\[
	\Delta(u)=
	\frac{P(T,f_u)}{\log m}
	=
	\frac1{\log m}
	\sup_{\mu\in\mathcal M(\Sigma,T)}
	\left\{
	h_\mu(T)+\int_\Sigma f_u\,d\mu
	\right\}.
	\]
	Thus $\mu_u$ is an equilibrium measure for $f_u$, and the quantity $\Delta(u)$ is represented by the pressure of this single potential.
	
	By \eqref{variational principle} and \eqref{minmax}, we have
	\[
	\Delta(u)=\frac1{\log m}
	\min_{0\le\lambda\le1}
	\max_{\bs\in\mathscr P(\mI)}
	\left\{
	h(\bs)+\lambda N(\bs)+(\tau-\lambda)c_u
	\right\}.
	\]
	The following lemma shows that the order of minimization and
	maximization can be interchanged.

	\begin{lemma}\label{entropy pressure formula}
		We have
		\begin{align*}
			\Delta(u)&=\f1{\log m}\max_{\bs\in\mathscr P(\mI)}\min_{0\le \lambda\le 1}
			\left\{
			h(\bs)+\lambda N(\bs)
			+(\tau-\lambda)c_u
			\right\}\\
			&=\frac{1}{\log m}\max_{\bs\in\mathscr P(\mI)}
			\big\{h(\bs)+(\tau-1)c_u+\min\{N(\bs),c_u\}\big\}.
		\end{align*}
	\end{lemma}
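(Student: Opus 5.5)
The plan is to apply a minimax theorem to the function
\[
F(\bs,\lambda)=h(\bs)+\lambda N(\bs)+(\tau-\lambda)c_u
\]
on $\mathscr P(\mI)\times[0,1]$, and then evaluate the inner minimum explicitly. The preceding discussion already gives
\[
\Delta(u)\log m=\min_{\lambda}\max_{\bs}F(\bs,\lambda).
\]
It therefore suffices to show $\min_\lambda\max_{\bs}F=\max_{\bs}\min_\lambda F$ and to compute the right-hand side.

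\textbf{Step 1: structural properties.} Both $\mathscr P(\mI)$ and $[0,1]$ are compact convex sets. For fixed $\lambda$, the map $\bs\mapsto F(\bs,\lambda)$ is concave and continuous. This holds because $h$ is concave, $N(\bs)$ is linear in $\bs$, and the term $(\tau-\lambda)c_u$ does not depend on $\bs$. For fixed $\bs$, the map $\lambda\mapsto F(\bs,\lambda)=h(\bs)+\tau c_u+\lambda(N(\bs)-c_u)$ is affine, hence convex and continuous. Jointly, $F$ is continuous on the product.

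\textbf{Step 2: minimax.} By von Neumann's (or Sion's) minimax theorem, Step 1 gives $\min_\lambda\max_{\bs}F=\max_{\bs}\min_\lambda F$. Both extrema are attained by compactness and continuity, which justifies writing max and min. This proves the first equality. If one prefers to avoid citing Sion, there is a direct route. The inequality $\max\min\le\min\max$ is trivial. For the reverse, let $\lambda_u$ be a minimizer of the convex function $\lambda\mapsto\log Z_\lambda+(\tau-\lambda)c_u$ and take the Gibbs vector $\mathbf w^{(\lambda_u)}$. Since $\frac{d}{d\lambda}\log Z_\lambda=N(\mathbf w^{(\lambda)})$, the first-order optimality conditions give three cases. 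If $\lambda_u$ is interior, then $N(\mathbf w^{(\lambda_u)})=c_u$. If $\lambda_u=0$, then $N\ge c_u$. If $\lambda_u=1$, then $N\le c_u$. In every case $\lambda_u$ minimizes the affine function $F(\mathbf w^{(\lambda_u)},\cdot)$. Combined with \eqref{entropy inequality}, this makes $(\mathbf w^{(\lambda_u)},\lambda_u)$ a saddle point.

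\textbf{Step 3: inner minimum.} For fixed $\bs$, the affine function $\lambda\mapsto F(\bs,\lambda)$ is minimized at an endpoint of $[0,1]$. Hence
\[
\min_\lambda F=h(\bs)+\tau c_u+\min\{0,N(\bs)-c_u\}=h(\bs)+(\tau-1)c_u+\min\{N(\bs),c_u\}.
\]
Taking the maximum over $\bs$ gives the second formula.

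The only potentially delicate point is the minimax interchange. It is handled either by Sion's theorem or by the saddle-point verification in Step 2. In the saddle-point route, the one thing to check carefully is the boundary cases $\lambda_u\in\{0,1\}$ via one-sided derivatives of $\log Z_\lambda$. These derivatives exist because $\log Z_\lambda$ is smooth on $\R$.
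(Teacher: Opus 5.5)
Your proposal is correct. Your direct saddle-point route is exactly the paper's proof: you evaluate the affine inner minimum at $\lambda\in\{0,1\}$, take a minimizer $\lambda_*$ of $\log Z_\lambda+(\tau-\lambda)c_u$, read off the three cases from $\frac{d}{d\lambda}\log Z_\lambda=N(\mathbf w^{(\lambda)})$, and close with \eqref{entropy inequality} at the Gibbs vector. The Sion option is a valid shortcut, since concavity of $h$ and compactness of $\mathscr P(\mI)\times[0,1]$ suffice; the bilinear von Neumann theorem alone would not, because $h$ is not linear.
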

	
	\begin{proof}
		
		For each fixed $\bs\in\mathscr P(\mI)$, the expression
		$h(\bs)+\lambda N(\bs)+(\tau-\lambda)c_u$
		is affine in $\lambda$. Hence
		\begin{align*}
			\min_{0\le \lambda\le 1}
			\big\{h(\bs)+\lambda N(\bs)+(\tau-\lambda)c_u
			\big\}&=\min\big\{h(\bs)+\tau c_u,h(\bs)+N(\bs)+(\tau-1)c_u\big\}\\
			&=\big\{h(\bs)+(\tau-1)c_u+\min\{N(\bs),c_u\}\big\}.
		\end{align*}
		
		Since 
		\[
		\max_{\bs\in\mathscr P(\mI)}\min_{0\le \lambda\le 1}
		\left\{
		h(\bs)+\lambda N(\bs)
		+(\tau-\lambda)c_u
		\right\}\le \max_{\bs\in\mathscr P(\mI)}
		\left\{
		h(\bs)+\lambda N(\bs)
		+(\tau-\lambda)c_u
		\right\}
		\]
		for every $\lambda\in [0,1]$, taking the minimum over $\lambda$ gives one inequality

		To prove the reverse inequality, let $F(\lambda)=\log Z_\lambda+(\tau-\lambda)c_u$ and  choose $\lambda_*\in[0,1]$ minimizing $F$. 
		We have $F'(\lambda)=N(\mathbf w^{(\lambda)})-c_u$, so
		\[
		\begin{cases}
			N(\mathbf w^{(\lambda_*)})\ge c_u,\quad&\lambda_*=0,\\
			N(\mathbf w^{(\lambda_*)})=c_u,&0<\lambda_*<1,\\
			N(\mathbf w^{(\lambda_*)})\le c_u,&\lambda_*=1.
		\end{cases}
		\]
		In each case,
		$\min\{N(\mathbf w^{(\lambda_*)}),c_u\}
		=\lambda_*N(\mathbf w^{(\lambda_*)})+(1-\lambda_*)c_u$.
		Consequently, \eqref{entropy inequality} yields
		\begin{align*}
			&\max_{\bs\in\mathscr P(\mI)}
			\big\{h(\bs)+(\tau-1)c_u+\min\{N(\bs),c_u\}\big\}\\
			\ge\,&
			h(\mathbf w^{(\lambda_*)})+(\tau-1)c_u
			+\min\{N(\mathbf w^{(\lambda_*)}),c_u\}\\
			=\,&h(\mathbf w^{(\lambda_*)})+\lambda_*N(\mathbf w^{(\lambda_*)})+(\tau-\lambda_*)c_u\\
			=\,&\max_{\bs\in\mathscr P(\mI)}\big\{h(\bs)+\lambda_*N(\bs)+(\tau-\lambda_*)c_u\big\}\\
			=\,& F(\lambda_*)\\
			=\,& 	\min_{0\le\lambda\le1}
			\max_{\bs\in\mathscr P(\mI)}
			\left\{
			h(\bs)+\lambda N(\bs)+(\tau-\lambda)c_u
			\right\}.
		\end{align*}
	\end{proof}

	\subsection{Covers and measures on the finite model}
	Let $\mu$ be a probability measure on
	$\Omega_k[\mbB]$ with an arbitrary joint distribution of the
	horizontal digits. Suppose that, conditional on these digits,
	the vertical digits are independent, with $j_\ell$ uniform in
	$\col_{i_\ell}$ for $0\le\ell<k$ and in $\mB_\ell$ for
	$k\le\ell<q_k$. Then, for every $\omega\in\Omega_k[\mbB]$
	and $0\le s\le k$,
	\begin{equation}\label{approximate square mass}
		\begin{aligned}
			\bigl((\Psi_k)_*\mu\bigr)(Q_s(\omega))\prod_{\ell=k-t_s}^{k-1}N_{i_\ell}
			\prod_{\ell=q_k-e_s}^{q_k-1}\#\mB_\ell=\mu\!\left(\left\{\omega'\in\Omega_k[\mbB]:
			i'_\ell=i_\ell\text{ for }k-s\le\ell<k\right\}\right)
			.
		\end{aligned}
	\end{equation}
	Indeed, $t_s\le s$, so the horizontal event on the right fixes
	every column involved in the prescribed mixed vertical range.
	Multiplying the conditional probabilities of the prescribed
	vertical digits gives the identity.
	
	\begin{lemma}\label{Hausdorff stopping lemma}
		Let $0\le\lambda<1$, $\kappa>0$, and $C\ge0$. For each $k$, suppose
		that $x_0,\ldots,x_{k-1}\in\R$ satisfy $|x_\ell-c_u|\le C$, and put
		$T_s=\sum_{\ell=k-s}^{k-1}(x_\ell-c_u)$ for $0\le s\le k$, with
		$T_0=0$. There are $\delta>0$ and $k_0$, depending only on
		$\lambda,\kappa,\tau,C$, such that for $k\ge k_0$,
		\[
		\max_{0\le s\le k}\{\kappa s+\lambda T_s-T_{t_s}\}\ge\delta k.
		\]
	\end{lemma}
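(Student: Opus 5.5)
The plan is to argue by contradiction along a finite chain of depths $s_0<s_1<\dots<s_J$, each linked to the previous one through the map $s\mapsto t_s$. Put $y_\ell=x_\ell-c_u$, so that $|y_\ell|\le C$ and $|T_s-T_{s'}|\le C|s-s'|$; in particular $|T_s|\le Ck$. By \eqref{levelsforapproximatesquare} and the definition $t_s=\max\{0,k-q(m^{k-s})\}$,
\[
t_s=\max\{0,\ \tau s-(\tau-1)k\}+O(1),
\]
with an error bounded in terms of $\tau$ only. Two consequences will be used:
\begin{itemize}
\item[(a)] $t_s=O(1)$ for $s\le s_0:=\lfloor(1-1/\tau)k\rfloor$;
\item[(b)] on $[s_0,k]$ the map $s\mapsto t_s$ is nondecreasing, increases by at most $\lceil\tau\rceil$ per unit step of $s$, and reaches $t_k=k$.
\end{itemize}
Hence for every target $0\le a\le k$ there is $s\ge s_0$ with $|t_s-a|\le\lceil\tau\rceil$. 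Since $k-s_0\approx k/\tau$, the whole chain will sit in $[s_0,k]$, where $s\ge s_0\ge ck$ with $c=(1-1/\tau)/2$ for large $k$.

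Suppose $\kappa s+\lambda T_s-T_{t_s}<\delta k$ for all $0\le s\le k$, with $\delta$ to be chosen.

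\emph{Case $\lambda=0$.} Take $s=s_0$. By (a), $|T_{t_{s_0}}|=O(C)$, so the assumed inequality gives $\kappa s_0-O(C)<\delta k$. This fails for large $k$ once $\delta<\kappa c$.

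\emph{Case $0<\lambda<1$.} I build the chain by taking $s_0$ as above and, inductively, $s_{j+1}\ge s_0$ with $|t_{s_{j+1}}-s_j|\le\lceil\tau\rceil$. Then $k-s_{j+1}\approx(k-s_j)/\tau$, so $k-s_j\approx k\tau^{-j-1}$. For a fixed number $J$ of steps, the $s_j$ therefore remain distinct and below $k$ once $k\ge k_0(J,\tau)$. Write $a_j=T_{s_j}$. Apply the assumed inequality at $s_0$, using (a), and at $s_{j+1}$, replacing $T_{t_{s_{j+1}}}$ by $T_{s_j}$ at cost $C\lceil\tau\rceil$. Using also $\kappa s_j\ge\kappa ck$ and choosing $\delta\le\kappa c/4$, we get for large $k$
\[
a_0\le-\frac{\kappa c k}{2\lambda},\qquad a_{j+1}\le\frac{a_j-\kappa c k/2}{\lambda}.
\]
Since $a_0<0$, $0<\lambda<1$ and the added term is negative, induction gives $a_j\le-(j+1)\kappa ck/2$. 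Choose $J=\lceil 2C/(\kappa c)\rceil$. Then $a_J\le -(J+1)\kappa ck/2<-Ck$, which contradicts $|T_{s_J}|\le Ck$.

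The constants $\delta$, $J$ and $k_0$ depend only on $\lambda,\kappa,\tau,C$, as required.

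The main obstacle is the rounding bookkeeping. One must check that $t_s$ genuinely hits within $O(1)$ of every target in $[0,k]$ as $s$ ranges over $[s_0,k]$, i.e.\ monotonicity and bounded jumps of $q(m^{k-s})$. One must also check that the $O(C)$ errors accumulated in the recursion are divided by $\lambda$ only $J$ times, so they stay bounded and are absorbed by $\kappa ck/4$ for $k\ge k_0$.
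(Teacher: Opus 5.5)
Your argument is correct and is essentially the paper's proof. You use the same chain of depths: your $s_j$ agrees with the paper's $s_{j+1}=\lfloor k(1-\tau^{-j-1})\rfloor$ up to $O(1)$, and each depth is linked to the previous one by $t_{s_{j+1}}\approx s_j$; your recursion $a_{j+1}\le(a_j-\kappa ck/2)/\lambda$ under the contradiction hypothesis is the recursive form of the paper's $\lambda^{j-1}$-weighted telescoping average. The only real difference is the stopping rule: you use $\lambda\le1$ to force $T_{s_J}<-Ck$ after $J\approx 2C/(\kappa c)$ steps, so your $J$ does not depend on $\lambda$, whereas the paper stops once $C\lambda^L\le\kappa(1-\tau^{-1})/4$.
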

	
	\begin{proof}
		The assumption on $T_s$ gives
		\[
		|T_s-T_t|\le C|s-t|,\qquad0\le s,t\le k.
		\]
		Put $
		\vartheta=1-\tau^{-1}>0,\,
		\delta=\kappa\vartheta/2.$
		Choose an integer $L\ge1$ such that $
		C\lambda^L\le\kappa\vartheta/4.$

		For $0\le j\le L$, define $
		s_j=\lfloor k(1-\tau^{-j})\rfloor.$
		Then all $s_j$ belong to $\{0,\ldots,k\}$ and $
		s_j\ge\vartheta k-1$ for $1\le j\le L.$
		The definition of $s_j$ gives
		\[
		k(1-\tau^{-j})-1<s_j\le k(1-\tau^{-j}),\qquad s_{j-1}\le k(1-\tau^{1-j})<s_{j-1}+1.
		\]
		Therefore
		\begin{equation}\label{s_j}
			\begin{aligned}
				s_{j-1}-\tau\le k(1-\tau^{1-j})-\tau&=\tau\big[k(1-\tau^{-j})-1\big]-(\tau-1)k\\
				&<\tau s_j-(\tau-1)k\le k(1-\tau^{1-j})<s_{j-1}+1.
			\end{aligned}
		\end{equation}
		Recall that
		\[
		t_s=\max\{0,k-q(r_s)\}=
		\max\{0,k-\lfloor\tau(k-s)\rfloor\}=
		\max\{0,\lceil\tau s-(\tau-1)k\rceil\}.
		\]
		\eqref{s_j} then yields $s_{j-1}-\tau<t_{s_j}\le s_{j-1}+1.$
		Consequently,
		\[
		|T_{t_{s_j}}-T_{s_{j-1}}|
		\le C|t_{s_j}-s_{j-1}|
		\le C(\tau+1).
		\]
		
		Write $
		W=\sum_{j=1}^{L}\lambda^{j-1}\ge1.$
		Note that
		\[
		\sum_{j=1}^{L}\lambda^{j-1}
		\bigl(\lambda T_{s_j}-T_{s_{j-1}}\bigr)=
		\lambda^LT_{s_L}-T_{s_0}
		=
		\lambda^LT_{s_L}.
		\]
		Using this identity and the preceding estimates gives
		\[
		\begin{aligned}
			\sum_{j=1}^{L}\lambda^{j-1}(\kappa s_j+\lambda T_{s_j}-T_{t_{s_j}})
			&=
			\kappa\sum_{j=1}^{L}\lambda^{j-1}s_j
			+\lambda^LT_{s_L}
			+\sum_{j=1}^{L}\lambda^{j-1}
			\bigl(T_{s_{j-1}}-T_{t_{s_j}}\bigr)\\
			&\ge
			\kappa(\vartheta k-1)W
			-C\lambda^Lk-C(\tau+1)W.
		\end{aligned}
		\]
		Since the weights are nonnegative,
		\[
		\begin{aligned}
			\max_{0\le s\le k}\{\kappa s+\lambda T_s-T_{t_s}\}
			&\ge
			\frac1W\sum_{j=1}^{L}\lambda^{j-1}(\kappa s_j+\lambda T_{s_j}-T_{t_{s_j}})\\
			&\ge
			\kappa\vartheta k-\kappa
			-\frac{C\lambda^L}{W}k-C(\tau+1)\\
			&\ge
			\frac{3\kappa\vartheta}{4}k-\kappa-C(\tau+1),
		\end{aligned}
		\]
		where the last inequality follows from $W\ge1$ and the
		choice of $L$.
		
		Finally, choose $
		k_0=
		\Big\lceil
		\frac{4(\kappa+C(\tau+1))}
		{\kappa\vartheta}
		\Big\rceil.
		$
		For every $k\ge k_0$, the preceding bound yields
		\[
		\max_{0\le s\le k}
		\{\kappa s+\lambda T_s-T_{t_s}\}
		\ge
		\frac{\kappa\vartheta}{2}k
		=
		\delta k.
		\]
		This proves the lemma.
	\end{proof}

	\begin{lemma}
		\label{finite upper Hausdorff estimate}
		If $\alpha>\Delta(u)$, there is $C>0$ such that
		\[
		\nu_\alpha(\mE_{k,q_k}(u),C(0,m^k))
		\le ne^{-C k}
		\]
		for all sufficiently large $k$.
	\end{lemma}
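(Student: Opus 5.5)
The plan is a mass-distribution stopping-time cover of the model $\mE_{k,q_k}(u)$ by approximate squares of varying depth. The weights come from a Bernoulli-type measure built from the pressure of Section~\ref{section hausdorff}, and Lemma~\ref{Hausdorff stopping lemma} supplies the stopping depth.

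\textbf{Choice of $\lambda$ and the measure.} Fix $\alpha>\Delta(u)$. The function $F(\lambda)=\log Z_\lambda+(\tau-\lambda)c_u$ is continuous on $[0,1]$ and $\Delta(u)=\min F/\log m$. So I would pick $\lambda\in[0,1)$, strictly less than $1$, with $F(\lambda)<\alpha\log m$. This avoids the endpoint $\lambda=1$, which Lemma~\ref{Hausdorff stopping lemma} excludes. Put $\kappa=\alpha\log m-F(\lambda)>0$.

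On $\Omega_k(u)$ take $\mu$ as follows. The horizontal digits $i_0,\dots,i_{k-1}$ are i.i.d.\ with law $\mathbf w^{(\lambda)}$. Conditionally on them, $j_\ell$ is uniform on $\col_{i_\ell}$ for $\ell<k$ and uniform on $\mB_u$ for the tail levels.

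\textbf{Mass of an approximate square.} By \eqref{approximate square mass}, with $x_\ell=\log N_{i_\ell}$ and $T_s=\sum_{\ell=k-s}^{k-1}(x_\ell-c_u)$,
\[
\log\bigl((\Psi_k)_*\mu\bigr)(Q_s(\omega))
=-s\log Z_\lambda+\lambda\sum_{\ell=k-s}^{k-1}x_\ell-\sum_{\ell=k-t_s}^{k-1}x_\ell-e_sc_u .
\]
Rewriting the sums through $T_s$ and $T_{t_s}$, and using $t_s+e_s=\tau s+O(1)$ from \eqref{levelsforapproximatesquare}, the $c_u$-terms combine into $-(\tau-\lambda)sc_u$ up to a bounded error. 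This gives
\[
\log\bigl((\Psi_k)_*\mu\bigr)(Q_s(\omega))=-sF(\lambda)+\lambda T_s-T_{t_s}+O(1).
\]
Since $|x_\ell-c_u|\le\log n$, Lemma~\ref{Hausdorff stopping lemma} applies with $C=\log n$. It yields, for each $\omega$ and all large $k$, a depth $s(\omega)$ with $\kappa s+\lambda T_s-T_{t_s}\ge\delta k$, which means
\[
m^{-\alpha s(\omega)}\le e^{O(1)}e^{-\delta k}\,\bigl((\Psi_k)_*\mu\bigr)(Q_{s(\omega)}(\omega)).
\]

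\textbf{The cover.} The partitions $\mQ_s(k,u)$ are nested, so the family $\{Q_{s(\omega)}(\omega)\}$ contains a disjoint subfamily of maximal members that covers $\mE_{k,q_k}(u)$. By Lemma~\ref{approximate square geometry}, each $Q_s$ lies in a lattice cube of side $r_s=m^{k-s}$. Summing the displayed bound over the disjoint subfamily gives
\[
\nu_\alpha(\mE_{k,q_k}(u),C(0,m^k))\le\sum m^{-\alpha s}\le e^{O(1)}e^{-\delta k}.
\]
Absorbing the bounded prefactor by shrinking the exponent slightly gives the form $ne^{-Ck}$ for large $k$.

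\textbf{Expected obstacle.} The substantive step is the stopping argument, and it is already done in Lemma~\ref{Hausdorff stopping lemma}. What remains is careful bookkeeping of the $O(1)$ errors coming from the floors in $t_s$ and $e_s$. The one genuinely delicate point is ensuring $\lambda<1$, which the continuity argument above handles; the case $c_u=0$ needs no separate treatment.
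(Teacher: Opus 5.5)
Your proposal is correct and follows the paper's proof essentially step for step. It uses the same choice of $\lambda<1$, the same Bernoulli-times-uniform measure, the same mass identity reduced to $\kappa s+\lambda T_s-T_{t_s}$ up to an error of at most $\log n$, and Lemma~\ref{Hausdorff stopping lemma} with $C=\log n$. The only cosmetic difference is that you extract a disjoint cover by taking maximal members of the nested family of stopped squares, whereas the paper uses the first stopping depth and checks directly that the selected squares form a partition; both routes give $\sum m^{-\alpha s}\le ne^{-\delta k}$.
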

	
	\begin{proof}
		By continuity of the pressure, choose $0\le\lambda<1$ such that
		\[
		\kappa=\alpha\log m-\log Z_\lambda-(\tau-\lambda)c_u>0.
		\]
		Define a probability measure $\mu_{k,\lambda}$ on $\Omega_k(u)$ by
		choosing the horizontal digits independently with probabilities
		$w_i^{(\lambda)}=N_i^\lambda/Z_\lambda$, then choosing each mixed
		vertical digit independently and uniformly in its column, conditional
		on the horizontal digits. Choose the tail digits independently and
		uniformly from $\mB_u$.
		
		Fix $\omega\in\Omega_k(u)$ and put $x_\ell=\log N_{i_\ell}$ and
		$T_s=\sum_{\ell=k-s}^{k-1}(x_\ell-c_u)$.
		Applying \eqref{approximate square mass} to
		$\mu_{k,\lambda}$ gives
		\[
		\bigl((\Psi_k)_*\mu_{k,\lambda}\bigr)(Q_s(\omega))
		=Z_\lambda^{-s}
		\prod_{\ell=k-s}^{k-1}N_{i_\ell}^{\lambda}
		\prod_{\ell=k-t_s}^{k-1}N_{i_\ell}^{-1}\eta(u)^{-e_s}.
		\]
		Using  \eqref{levelsforapproximatesquare}, we obtain
		\[
		\begin{aligned}
			\log\frac{\bigl((\Psi_k)_*\mu_{k,\lambda}\bigr)(Q_s(\omega))}
			{m^{-s\alpha}}&=s(\alpha\log m-\log Z_\lambda)
			+\lambda\sum_{\ell=k-s}^{k-1}x_\ell
			-\sum_{\ell=k-t_s}^{k-1}x_\ell-e_sc_u\\
			&=\kappa s+\lambda T_s-T_{t_s}
			+(\tau s-t_s-e_s)c_u\\
			&\ge \kappa s+\lambda T_s-T_{t_s}-\log n.
		\end{aligned}
		\]
		By Lemma \ref{Hausdorff stopping lemma}, there is $C>0$ such that,
		for all sufficiently large $k$, every $\omega$ has a depth $s$ with
		\[
		\bigl((\Psi_k)_*\mu_{k,\lambda}\bigr)(Q_s(\omega))
		\ge \f{e^{C k}}nm^{-s\alpha}.
		\]
		
		For each $\omega\in\Omega_k(u)$, define its first stopping
		depth by
		\[
		s(\omega)=\min\left\{
		s\in\{0,\ldots,k\}:
		\bigl((\Psi_k)_*\mu_{k,\lambda}\bigr)(Q_s(\omega))
		\ge\f{e^{C k}}nm^{-s\alpha}
		\right\}.
		\]
		The preceding estimate ensures that this minimum exists.
		Let
		\[
		\mathscr Q_k^*=
		\{Q_{s(\omega)}(\omega):\omega\in\Omega_k(u)\}.
		\]
		Since $\Psi_k(\omega)\in Q_{s(\omega)}(\omega)$ for every
		$\omega$, we have
		\[
		\mE_{k,q_k}(u)=\bigcup_{Q\in\mathscr Q_k^*}Q.
		\]
		We claim that the distinct members of $\mathscr Q_k^*$
		are pairwise disjoint. Suppose that
		$Q_{s(\omega)}(\omega)$ and $Q_{s(\omega')}(\omega')$ intersect. Assume that $s(\omega)\le s(\omega')$.
		By nestedness, $
		Q_{s(\omega')}(\omega')\subseteq Q_{s(\omega)}(\omega'),$
		so $Q_{s(\omega)}(\omega)$ and $Q_{s(\omega)}(\omega')$ also intersect.
		Both belong to the partition $\mathscr Q_{s(\omega)}(k,u)$;
		therefore $
		Q_{s(\omega)}(\omega)=Q_{s(\omega)}(\omega').$
		Since $\omega$ stops at depth $s(\omega)$, this equality implies that $\omega'$ stops at depth $s(\omega)\le s(\omega')$. The definition of its first stopping depth implies
		$s(\omega)=s(\omega')$, and the two selected
		squares coincide. This proves the claim and also shows
		that the stopping depth $s(Q)$ of each
		$Q\in\mathscr Q_k^*$ is well defined.
		
		Consequently, $\mathscr Q_k^*$ is a partition of
		$\mE_{k,q_k}(u)$, and
		\[
		\sum_{Q\in\mathscr Q_k^*}
		\bigl((\Psi_k)_*\mu_{k,\lambda}\bigr)(Q)=
		\bigl((\Psi_k)_*\mu_{k,\lambda}\bigr)
		\bigl(\mE_{k,q_k}(u)\bigr)=
		\mu_{k,\lambda}(\Omega_k(u))
		=1.
		\]
		Cover each 
		square $Q\in\mathscr Q_k^*$ by the lattice cube of side length $r_{s(Q)}$ supplied by
		Lemma \ref{approximate square geometry}. We obtain
		\begin{align*}
			\nu_\alpha(\mE_{k,q_k}(u),C(0,m^k))
			&\le\sum_{Q\in\mathscr Q_k^*}\left(\frac{r_{s(Q)}}{m^k}\right)^\alpha\\
			&=\sum_{Q\in\mathscr Q_k^*}m^{-s(Q)\alpha}
			\le ne^{-C k}\sum_{Q\in\mathscr Q_k^*}
			\bigl((\Psi_k)_*\mu_{k,\lambda}\bigr)(Q)
			=ne^{-C k}.
		\end{align*}
	\end{proof}

	\begin{lemma}
		\label{finite lower Hausdorff estimate}
		If $0<\alpha<\Delta(u)$, there is $c_\alpha>0$ such that
		\[
		\nu_\alpha(\mE_{k,q_k}(u),C(0,m^k))\ge c_\alpha
		\]
		for all sufficiently large $k$.
	\end{lemma}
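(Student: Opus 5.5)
We will use a mass distribution principle on the finite model. By Lemma \ref{entropy pressure formula}, choose $\bs\in\mathscr P(\mI)$ attaining the maximum, so that
\[
\Delta(u)\log m=h(\bs)+(\tau-1)c_u+\min\{N(\bs),c_u\}.
\]
Define $\mu_k$ on $\Omega_k(u)$ as follows. The horizontal digits are i.i.d.\ with law $\bs$. Conditional on them, each mixed vertical digit $j_\ell$ is uniform in $\col_{i_\ell}$, and each tail digit is uniform in $\mB_u$. Then \eqref{approximate square mass} gives, for every $\omega$ and depth $s$,
\[
-\log\bigl((\Psi_k)_*\mu_k\bigr)(Q_s(\omega))=\sum_{\ell=k-s}^{k-1}\bigl(-\log\sigma_{i_\ell}\bigr)+\sum_{\ell=k-t_s}^{k-1}\log N_{i_\ell}+e_sc_u .
\]
Under $\mu_k$, the first sum has mean $s\,h(\bs)$ and the second has mean $t_s N(\bs)$. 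Using $t_s+e_s=\tau s+O(1)$ from \eqref{levelsforapproximatesquare}, the expected exponent equals $s h(\bs)+t_sN(\bs)+(\tau s-t_s)c_u$. Since $t_s\le s$, this is at least $s\bigl(h(\bs)+(\tau-1)c_u+\min\{N(\bs),c_u\}\bigr)-O(1)$. One checks this by splitting into the cases $N(\bs)\ge c_u$ and $N(\bs)<c_u$: the quantity $t_sN+(\tau s-t_s)c_u$ is affine in $t_s\in[0,s]$, and its minimum over that range is $s((\tau-1)c_u+\min\{N,c_u\})$. So on average each depth-$s$ approximate square has mass at most $m^{-s\Delta(u)}$ up to constants.

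Next we upgrade the average bound to a pointwise one on a large set. Fix $\alpha<\alpha'<\Delta(u)$. Call $\omega$ \emph{good} if, for every $s$ with $\varepsilon k\le s\le k$, the mass of $Q_s(\omega)$ is at most $m^{-s\alpha'}$. Both sums above are sums of bounded i.i.d.\ variables indexed by the top $s$ and $t_s$ levels. So Hoeffding's inequality plus a union bound over the $k+1$ depths gives $\mu_k(\text{good})\ge1/2$ for large $k$. This uses that deviations of order $(\Delta(u)-\alpha')\varepsilon k\log m$ have probability $e^{-ck}$.

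For the small depths $s<\varepsilon k$ we do not need concentration. A cube of side $r\le m^{k-\varepsilon k}$ carries negligible weight anyway. Alternatively, we trivially bound the mass of $Q_s(\omega)$ by $1\le m^{(\varepsilon k)\alpha'}m^{-s\alpha'}$. So the cost term $(r/m^k)^\alpha\ge m^{-\varepsilon k\alpha}$ is already at least a constant times the mass, after choosing $\varepsilon$ so that $\varepsilon(\alpha'-\alpha)$ is absorbed. More cleanly, we only need the pointwise bound for cubes with $(r/m^k)^\alpha$ small.

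Now take any cover of $\mE_{k,q_k}(u)$ by lattice cubes $U_a$ with sides $d_a\le m^k$. For each cube set $s=k-p(d_a)$, so $r_s\le d_a<mr_s$. By Lemma \ref{approximate square geometry}, $U_a$ meets at most $O(n)$ depth-$s$ approximate squares. Their total good-mass is at most $O(1)\,m^{-s\alpha'}\le O(1)(d_a/m^k)^{\alpha}$ when $s\ge\varepsilon k$. Cubes with $s<\varepsilon k$ have $(d_a/m^k)^\alpha\ge m^{-(\varepsilon k+1)\alpha}$; for these we use the crude bound that mass is at most $1$. If any such cube is present, the sum is already bounded below once $\varepsilon k$ is replaced by a fixed depth. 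Concretely, we restrict "good" to $s\ge S_0$ for a fixed $S_0$, and handle $s<S_0$ by the bound $(d_a/m^k)^\alpha\ge m^{-(S_0+1)\alpha}$. Summing gives $\tfrac12\le\mu_k(\text{good})\le C\sum_a(d_a/m^k)^\alpha$, hence $\nu_\alpha\ge c_\alpha$.

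The main obstacle is exactly this concentration step at moderately small depths $S_0\le s\le\varepsilon k$. There Hoeffding yields only $e^{-cs}$, not $e^{-ck}$. We expect to handle it by summing the failure probabilities $\sum_{s\ge S_0}e^{-cs}\le1/2$, choosing $S_0$ large. Here $t_s$ may be $0$ for small $s$, and then the second sum vanishes. In that regime the expected exponent is $s h(\bs)+\tau s c_u$, which is $\ge s\Delta(u)\log m$, so the same argument applies with room to spare.
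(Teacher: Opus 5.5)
Your argument is correct, but it reaches the Frostman-type bound by a different route from the paper's.

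The paper replaces the maximizer by a nearby rational $\bs$. It takes $\mu_k$ uniform on words whose horizontal part, read from the top level, is a concatenation of length-$L$ blocks from a single type class $\mathcal T_L$. Every complete block then has exactly the prescribed digit counts, so the estimate $((\Psi_k)_*\mu_k)(Q)\le e^{C_L}m^{-\alpha s}$ holds deterministically for every approximate square at every depth. The mass distribution principle therefore applies to the whole measure with no exceptional set. The cost is the rational approximation and the Stirling estimate for $h_L$.

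You keep an exact maximizer and the Bernoulli measure. Then $-\log((\Psi_k)_*\mu_k)(Q_s(\omega))$ is $e_sc_u$ plus a sum of $s$ independent terms $-\log\sigma_{i_\ell}+\mathbf 1_{\{\ell\ge k-t_s\}}\log N_{i_\ell}$. These terms are almost surely bounded uniformly in $k$. Their total mean is at least $s\Delta(u)\log m-c_u$, by the same inequality $t_s(N(\bs)-c_u)\ge s\min\{N(\bs)-c_u,0\}$ that the paper uses. Hoeffding then bounds the probability that $Q_s(\omega)$ has mass above $m^{-s\alpha'}$ by $e^{-cs}$, with $c$ independent of $k$ once $s\ge S_0$. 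The summable union bound $\sum_{s\ge S_0}e^{-cs}\le\frac12$ gives a good set of $\mu_k$-measure at least $\frac12$.

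Restrict $\mu_k$ to that set and consider a cube $U$ in a cover. If $s=k-p(d(U))\ge S_0$, then $U$ receives at most $2(n+2)(d(U)/m^k)^\alpha$. If $s<S_0$, it receives at most $1\le m^{S_0\alpha}(d(U)/m^k)^\alpha$. If $d(U)>m^k$, it already contributes at least $1$ to the sum. Together these give $\nu_\alpha\ge c_\alpha$.

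Your route avoids types and rationality, but it needs a concentration step and controls the mass only off an exceptional set. The paper's route is purely combinatorial and gives a uniform pointwise estimate.

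When you write it up, discard the intermediate $\varepsilon k$ cutoff entirely. With that cutoff, shallow cubes only satisfy a bound with constant $m^{\varepsilon k\alpha}$, which is not uniform in $k$. Only the fixed-$S_0$ version with the summable union bound works.
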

	
	\begin{proof}
		We construct a probability measure $\mu_k$ on $\Omega_k(u)$
		such that
		\[
		\bigl((\Psi_k)_*\mu_k\bigr)(U)
		\le C_\alpha\left(\frac{d(U)}{m^k}\right)^\alpha
		\qquad(U\in\mathscr C),
		\]
		where $C_\alpha$ is independent of $k$. Applying this estimate
		to a cover of the model will give the required lower bound.
		
		By Lemma \ref{entropy pressure formula} and continuity,
		choose a rational probability vector
		$\bs\in\mathscr P(\mI)$ such that
		\[
		h(\bs)+(\tau-1)c_u+\min\{N(\bs),c_u\}
		>\alpha\log m.
		\]
		For an integer $L$ satisfying $L\sigma_i\in\mathbb Z$
		for every $i\in\mI$, let $\mathcal T_L$ be the set of
		words in $\mI^L$ in which each digit $i$ occurs exactly
		$L\sigma_i$ times. Put
		\[
		h_L=\frac1L\log\#\mathcal T_L.
		\]
		The multinomial formula and Stirling's estimate give
		\[
		\#\mathcal T_L=
		\frac{L!}{\prod_{i\in\mI}(L\sigma_i)!},
		\qquad
		h_L=h(\bs)+O_{\mI}\left(\frac{\log(L+1)}{L}\right).
		\]
		We may therefore fix $L$ sufficiently large 
		such that
		\begin{equation}\label{type block exponent}
			h_L+(\tau-1)c_u+\min\{N(\bs),c_u\}>\alpha\log m.
		\end{equation}

		For $k\ge L$, write $k=bL+a$, where $0\le a<L$,
		and fix $i_*\in\mI$. Define the blocks
		\[
		I_\nu=
		\{k-\nu L,k-\nu L+1,\ldots,k-(\nu-1)L-1\},
		\qquad 1\le\nu\le b,
		\]
		and the subset
		\[
		\Omega_k^*=\left\{\omega\in\Omega_k(u):
		\begin{array}{ll}
			i_\ell=i_*,
			&0\le\ell<a,\\
			(i_\ell)_{\ell\in I_\nu}\in\mathcal T_L,\quad
			&1\le\nu\le b
		\end{array}
		\right\}.
		\]
		There are $(\#\mathcal T_L)^b$ allowed horizontal words.
		In each such word, the digit $i$ occurs
		$bL\sigma_i+a\mathbf1_{\{i_*\}}(i)$ times. Hence the number
		of compatible vertical words is always
		\[
		N_{i_*}^{a}
		\prod_{i\in\mI}N_i^{bL\sigma_i}
		\eta(u)^{q_k-k},
		\]
		independently of the chosen horizontal word. Consequently,
		\[
		\#\Omega_k^*=(\#\mathcal T_L)^b
		N_{i_*}^{a}\prod_{i\in\mI}N_i^{bL\sigma_i}\eta(u)^{q_k-k}.
		\]
		Define $\mu_k$ to be the uniform probability measure on
		$\Omega_k^*$, viewed as a measure on $\Omega_k(u)$:
		\[
		\mu_k(F)
		=
		\frac{\#(F\cap\Omega_k^*)}{\#\Omega_k^*},
		\qquad F\subseteq\Omega_k(u).
		\]
		In particular, $\mu_k(\Omega_k(u))=1$.
		Since every allowed horizontal word has the same number
		of vertical completions, its $\mu_k$-probability is
		$(\#\mathcal T_L)^{-b}$. Conditional on a horizontal word,
		the vertical word is uniformly distributed on
		\[
		\prod_{\ell=0}^{k-1}\Theta_{i_\ell}
		\times\mB_u^{q_k-k}.
		\]
		Thus the vertical digits are conditionally independent and
		uniform in their respective alphabets, and
		\eqref{approximate square mass} applies to $\mu_k$.
		
		Fix $0\le s\le k$ and $Q\in\mQ_s(k,u)$ with positive
		$(\Psi_k)_*\mu_k$-mass. Choose
		$\omega\in\Omega_k^*\cap\Psi_k^{-1}(Q)$, so that
		$Q=Q_s(\omega)$.
		The highest $s$ mixed levels contain
		$\lfloor s/L\rfloor$ complete blocks. Prescribing their
		horizontal digits fixes one word of $\mathcal T_L$ in
		each of these blocks, leaving at most
		$(\#\mathcal T_L)^{b-\lfloor s/L\rfloor}$
		allowed horizontal words. Therefore
		\begin{align*}
			\mu_k\left(
			\left\{\omega'\in\Omega_k(u):
			i'_\ell=i_\ell\text{ for }k-s\le\ell<k\right\}
			\right)&\le
			\frac{(\#\mathcal T_L)^{b-\lfloor s/L\rfloor}}
			{(\#\mathcal T_L)^b}\\
			&=e^{-\lfloor s/L\rfloor Lh_L}
			\le e^{(L-s)h_L},
		\end{align*}
		where $i_\ell$ and $i'_\ell$ denote the digits
		of $\omega$ and $\omega'$, respectively. Since $(i_\ell)_{\ell\in I_\nu}\in \mathcal T_L$,
		every complete block satisfies
		\[
		\sum_{\ell\in I_\nu}\log N_{i_\ell}=
		\sum_{i\in\mI}L\sigma_i\log N_i=LN(\bs).
		\]
		The highest $t_s$ mixed levels contain
		$\lfloor t_s/L\rfloor$ such blocks. Since
		$\log N_i\ge0$, the remaining digits give nonnegative
		contributions, and hence
		\[
		\sum_{\ell=k-t_s}^{k-1}\log N_{i_\ell}\ge
		\lfloor t_s/L\rfloor LN(\bs)\ge
		(t_s-L)N(\bs).
		\]
		Combining these estimates with
		\eqref{approximate square mass} yields
		\begin{align*}
			\bigl((\Psi_k)_*\mu_k\bigr)(Q)&=\prod_{\ell=k-t_s}^{k-1}N_{i_\ell}^{-1}\eta(u)^{-e_s}\mu_k\left(
			\left\{\omega'\in\Omega_k(u):
			i'_\ell=i_\ell\text{ for }k-s\le\ell<k\right\}
			\right)\\
			&\le 	\exp\big(
			L(h_L+N(\bs))-sh_L-t_sN(\bs)-e_sc_u
			\big).
		\end{align*}

		Using $|t_s+e_s-\tau s|<1$, $c_u\ge0$, and
		\[
		t_s(N(\bs)-c_u)
		\ge s\min\{N(\bs)-c_u,0\},
		\]
		we obtain
		\[
		\begin{aligned}
			sh_L+t_sN(\bs)+e_sc_u
			&=
			s(h_L+\tau c_u)+t_s(N(\bs)-c_u)
			+(t_s+e_s-\tau s)c_u\\
			&\ge
			s\bigl(h_L+(\tau-1)c_u+\min\{N(\bs),c_u\}\bigr)-c_u.
		\end{aligned}
		\]
		By \eqref{type block exponent}, this gives
		\[
		\bigl((\Psi_k)_*\mu_k\bigr)(Q)
		\le
		e^{C_L}m^{-\alpha s}=e^{C_L}\left(\frac{r_s}{m^k}\right)^\alpha,
		\]
		where $C_L=L(h_L+N(\bs))+c_u$.
		The constant $C_L$ is independent of $k$, $s$, and $Q$.
		The same bound is immediate for squares of zero mass,
		so it holds for every $Q\in\mQ_s(k,u)$.
		
		Now let $U\in\mathscr C$. If $d(U)\ge m^k$, then
		\[
		\bigl((\Psi_k)_*\mu_k\bigr)(U)
		\le1\le\left(\frac{d(U)}{m^k}\right)^\alpha.
		\]
		Otherwise choose $s\in\{0,\ldots,k\}$ such that $d(U)\le r_s<md(U).$
		Lemma \ref{approximate square geometry} shows that $U$
		meets at most $2(n+2)$ members of $\mQ_s(k,u)$.
		Consequently,
		\[
		\bigl((\Psi_k)_*\mu_k\bigr)(U)\le
		\sum_{\substack{Q\in\mQ_s(k,u)\\Q\cap U\ne\emptyset}}
		\bigl((\Psi_k)_*\mu_k\bigr)(Q)\le
		2(n+2)e^{C_L}\left(\frac{r_s}{m^k}\right)^\alpha\le
		C_\alpha\left(\frac{d(U)}{m^k}\right)^\alpha,
		\]
		where $C_\alpha=2(n+2)m^\alpha e^{C_L}$ is independent
		of $k$. This includes $d(U)=1$, for which $s=k$.
		
		Finally, for any lattice-cube cover $\{U_j\}$ of
		$\mE_{k,q_k}(u)$, the probability measure
		$(\Psi_k)_*\mu_k$ satisfies
		\[
		1
		\le
		\sum_j\bigl((\Psi_k)_*\mu_k\bigr)(U_j)
		\le
		C_\alpha\sum_j
		\left(\frac{d(U_j)}{m^k}\right)^\alpha.
		\]
		Taking the infimum over all such covers proves
		\[
		\nu_\alpha(\mE_{k,q_k}(u),C(0,m^k))
		\ge C_\alpha^{-1}.
		\]
		Thus we may take $c_\alpha=C_\alpha^{-1}$.
	\end{proof}

	\subsection{Transfer to the orbit}
	
	\begin{proposition}\label{discrete Hausdorff dimension formula}
		Let $z=(u,v)\in\Z^2$. Then
		\[
		\dimdH\fola(z)=\dimL\fola(z)=\Delta(u).
		\]
	\end{proposition}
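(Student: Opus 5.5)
We combine the model transfer estimate \eqref{content model transfer} of Lemma~\ref{Hausdorff orbit transfer} with the two finite-model estimates, Lemmas~\ref{finite upper Hausdorff estimate} and~\ref{finite lower Hausdorff estimate}. By Proposition~\ref{base independence}(i), both $\dimdH$ and $\dimL$ may be computed from the centered windows $V_k^{(m)}$ in base $t=m$. Since $\dimL A\le\dimdH A$ always holds, it suffices to prove two inequalities: $\dimdH\fola(z)\le\Delta(u)$, which needs summability of the window costs above $\Delta(u)$, and $\dimL\fola(z)\ge\Delta(u)$, which needs that these costs do not tend to zero below $\Delta(u)$.

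For the upper bound, fix $\alpha>\Delta(u)$. The right-hand inequality of \eqref{content model transfer} gives, for all large $k$,
\[
\nu_\alpha(\fola(z),V_k^{(m)})\le(q_k+C)m^{\alpha C}\,\nu_\alpha\bigl(\mE_{k+C,q_{k+C}}(u),C(0,m^{k+C})\bigr).
\]
Lemma~\ref{finite upper Hausdorff estimate}, applied at level $k+C$, bounds the last factor by $ne^{-c(k+C)}$ for some $c>0$. Since $q_k=O(k)$, the right-hand side is $O(k e^{-ck})$, which is summable. Hence $\sum_k\nu_\alpha(\fola(z),V_k^{(m)})<\infty$, so $\dimdH\fola(z)\le\alpha$. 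Letting $\alpha\downarrow\Delta(u)$ gives the upper bound.

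For the lower bound, fix $0<\alpha<\Delta(u)$; if $\Delta(u)=0$ there is nothing to prove. The left-hand inequality of \eqref{content model transfer} together with Lemma~\ref{finite lower Hausdorff estimate} at level $k-C$ gives
\[
\nu_\alpha(\fola(z),V_k^{(m)})\ge m^{-\alpha C}c_\alpha>0
\]
for all large $k$. Thus $\nu_\alpha(\fola(z),V_k^{(m)})\not\to0$, and the window form of $\dimL$ yields $\dimL\fola(z)\ge\alpha$. Letting $\alpha\uparrow\Delta(u)$ gives $\dimL\fola(z)\ge\Delta(u)$, and the chain $\Delta(u)\le\dimL\le\dimdH\le\Delta(u)$ closes.

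The only point needing care is the bookkeeping in the transfer. The constant $C$ of Lemma~\ref{Hausdorff orbit transfer} must be fixed before applying the model lemmas at levels $k\pm C$. The polynomial factor $q_k+C$ must be absorbed by the exponential decay. The window characterizations in Proposition~\ref{base independence}(i) must be used with base $m$, so that $d(V_k^{(m)})=m^k$ matches the normalization in \eqref{content model transfer}. All the substantive work has already been done in Lemmas~\ref{finite upper Hausdorff estimate} and~\ref{finite lower Hausdorff estimate}, so the proof reduces to this assembly.
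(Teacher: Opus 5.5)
Your proposal is correct and follows the paper's own proof. You reduce to the centered windows $V_k^{(m)}$ via Proposition~\ref{base independence}(i), then combine the transfer inequality \eqref{content model transfer} with Lemmas~\ref{finite upper Hausdorff estimate} and~\ref{finite lower Hausdorff estimate}; this gives summable decay (the factor $q_k+C$ is absorbed) above $\Delta(u)$ and a uniform positive lower bound below it. Using $\dimL\le\dimdH$ to reduce to two inequalities simply repackages the paper's ``both upper bounds, both lower bounds'' step.
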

	
	\begin{proof}
		By Proposition \ref{base independence} (i), both dimensions may be computed using $V_k^{(m)}$.
		
		Let $\alpha>\Delta(u)$. Lemmas \ref{finite upper Hausdorff estimate}
		and \ref{Hausdorff orbit transfer} give constants $C>0$
		such that
		\[
		\nu_\alpha(\fola(z),V_k^{(m)})
		\le n(1+k)e^{-C k}
		\]
		for all sufficiently large $k$. These contents are summable and tend
		to zero, proving both upper bounds. If $0<\alpha<\Delta(u)$,
		Lemmas~\ref{finite lower Hausdorff estimate}
		and~\ref{Hausdorff orbit transfer} give
		\[
		\liminf_{k\to\infty}\nu_\alpha(\fola(z),V_k^{(m)})>0.
		\]
		Thus the contents do not tend to zero and their series diverges, proving
		both lower bounds. When $\Delta(u)=0$, the lower bounds follow from
		nonnegativity of the dimensions.
	\end{proof}

	Recall that $D_0(u)=\log_m(\#\La/\eta(u))+\log_n\eta(u)$ and
	$D_1(u)=\log_mN_\La+\log_n\eta(u)$.
	
	\begin{corollary}\label{Hausdorff endpoint estimates}
		For every $u\in\Z$, $\Delta(u)\le\min\{D_0(u),D_1(u)\}$. Moreover,
		if $\eta(u)=1$, then $\Delta(u)=\log_mN_\La$; if $\eta(u)=\Nm$,
		then $\Delta(u)=D_0(u)$.
	\end{corollary}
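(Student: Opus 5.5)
Everything follows from the definition
\[
\Delta(u)=\frac1{\log m}\min_{0\le\lambda\le1}\Bigl\{\log Z_\lambda+(\tau-\lambda)c_u\Bigr\},\qquad Z_\lambda=\sum_{i\in\mI}N_i^\lambda,
\]
by evaluating the minimand at the two endpoints $\lambda=0$ and $\lambda=1$. At $\lambda=0$ we get $Z_0=N_\La$, and the value is $(\log N_\La+\tau c_u)/\log m=\log_mN_\La+\log_n\eta(u)=D_1(u)$, since $\tau c_u/\log m=\log\eta(u)/\log n$. At $\lambda=1$ we get $Z_1=\sum_iN_i=\#\La$, and the value is $\log_m\#\La+(\tau-1)\log\eta(u)/\log m=\log_m(\#\La/\eta(u))+\log_n\eta(u)=D_0(u)$. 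Since $\Delta(u)$ is a minimum over $\lambda\in[0,1]$, it is at most either endpoint value, which gives $\Delta(u)\le\min\{D_0(u),D_1(u)\}$.

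For the case $\eta(u)=1$ we have $c_u=0$, so the minimand is $\log Z_\lambda/\log m$. Because every $N_i\ge1$, the function $\lambda\mapsto Z_\lambda$ is nondecreasing. Hence the minimum is attained at $\lambda=0$, and $\Delta(u)=\log_mN_\La$.

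For the case $\eta(u)=\Nm$, I would use the entropy form in Lemma~\ref{entropy pressure formula}, or equivalently the derivative $F'(\lambda)=N(\mathbf w^{(\lambda)})-c_u$ computed there. Since $N_i\le\Nm=\eta(u)$ for every $i$, we get $N(\bs)\le\log\Nm=c_u$ for every probability vector $\bs$. Thus $F'\le0$ on $[0,1]$, the minimum is at $\lambda=1$, and $\Delta(u)=D_0(u)$. One could also argue directly: $Z_\lambda\Nm^{-\lambda}=\sum_i(N_i/\Nm)^\lambda$ is nonincreasing in $\lambda$, and $F(\lambda)=\log\bigl(Z_\lambda\Nm^{-\lambda}\bigr)+\tau c_u$.

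There is no real obstacle in this argument. The only care needed is the base-change identity $\tau\log\eta/\log m=\log_n\eta$ and the monotonicity facts. The case $\eta(u)=1$ with all $N_i=1$ is consistent with both formulas.
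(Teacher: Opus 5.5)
Your proposal is correct and follows essentially the same route as the paper. It evaluates the minimand at $\lambda=0$ and $\lambda=1$ for the upper bound, uses the monotonicity of $\lambda\mapsto Z_\lambda$ when $c_u=0$, and uses the sign of the derivative $N(\mathbf w^{(\lambda)})-c_u\le 0$ when $\eta(u)=\Nm$. Your alternative rewriting $F(\lambda)=\log\bigl(\sum_i (N_i/\Nm)^\lambda\bigr)+\tau c_u$ is a valid derivative-free variant of that last step.
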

	
	\begin{proof}
		Evaluation of the pressure at $\lambda=1$ and $\lambda=0$ gives
		$\Delta(u)\le D_0(u)$ and $\Delta(u)\le D_1(u)$, respectively.
		If $\eta(u)=1$, then $c_u=0$ and
		$\lambda\mapsto\sum_{i\in\mI}N_i^\lambda$ is nondecreasing, so its
		minimum is $\#\mI=N_\La$. If $\eta(u)=\Nm$, the pressure derivative
		is
		\[
		\frac{\sum_{i\in\mI}N_i^\lambda\log N_i}
		{\sum_{i\in\mI}N_i^\lambda}
		-\log\eta(u)\le0.
		\]
		The minimum is therefore attained at $\lambda=1$, giving
		$\Delta(u)=D_0(u)$.
	\end{proof}
	
	\begin{remark}
		If the numbers $N_i$, $i\in\mI$, are not all equal, the pressure is
		strictly convex, since its second derivative is the variance of
		$\log N_i$ with respect to the positive weights $w_i^{(\lambda)}$.
		Its minimizer lies in $(0,1)$ precisely when
		\[
		\frac{1}{N_\La}\sum_{i\in\mI}\log N_i
		<\log\eta(u)
		<\frac{1}{\#\La}\sum_{i\in\mI}N_i\log N_i.
		\]
		In this case $\Delta(u)<\min\{D_0(u),D_1(u)\}$.
	\end{remark}

	\section{Equal expansion factors }\label{section equal}
	
	We now treat the case $m=n$, in which the two coordinate
	scales coincide. The finite model has no vertical tail, and
	uniform weights on words suffice for the lower bounds. Together
	with a direct Assouad estimate, these bounds yield the common
	dimension asserted in Theorem~\ref{main theorem m=n}.
	
	\begin{proof}[Proof of Theorem \ref{main theorem m=n}]
		Put $d=\log_m\#\La$. Since $m=n$, the finite model has
		$q_k=k$ and no tail. A depth-$s$ approximate square therefore
		fixes precisely the highest $s$ full symbols. In particular,
		\[
		\Lambda_\ell(z)=m^\ell z+\mE_{\ell,\ell}(u),
		\qquad
		\#\Lambda_\ell(z)=(\#\Lambda)^\ell.
		\]

		We shall prove
		\[
		\dim_{\mathrm A}\fola(z)\le d
		\qquad\text{and}\qquad
		\min\big\{\dimL \fola(z),\dimLM \fola(z)\big\}\ge d.
		\]
		The dimension inequalities in the introduction will then give
		all the asserted equalities.
		
		We first establish the Assouad upper bound. Fix $1\le r<R$
		and $w\in\mathbb R^2$. We use the half-open square grid of
		side length $m^{p(r)}$. Since $m^{p(r)}\le r$, each grid cell
		can be covered by one square of side $r$. For terminal levels $\ell\le p(r)$, every point
		$\omega\in\Lambda_\ell(z)$ satisfies
		\[
		\|\omega\|_\infty
		\le(\|z\|_\infty+1)m^\ell
		\le(\|z\|_\infty+1)r.
		\]
		Thus the union of all these layers can be covered by
		$O_z(1)$ squares of side $r$. For a fixed length $p(r)<\ell\le p(R)$, prescribing the
		symbols at levels $p(r),\ldots,\ell-1$ places all remaining
		points in one grid cell of side $m^{p(r)}$. Consequently, $
		N_r(\Lambda_\ell(z))\le(\#\Lambda)^{\ell-p(r)}.$ We count all lengths $\ell>p(R)$ together. For a point
		$(x,y)\in \fola(z)\cap Q(w,R)$, put
		\[
		U=\left\lfloor\frac{x}{m^{p(R)}}\right\rfloor,
		\qquad
		V=\left\lfloor\frac{y}{m^{p(R)}}\right\rfloor.
		\]
		Since $R/m<m^{p(R)}\le R$, only $O_m(1)$ pairs $(U,V)$
		occur in $Q(w,R)$. If $(x,y)$ has a representation of length $\ell>p(R)$,
		its fine-scale quotients satisfy
		\[
		\begin{aligned}
			\left\lfloor\frac{x}{m^{p(r)}}\right\rfloor
			&=
			m^{p(R)-p(r)}U
			+\sum_{\ell=p(r)}^{p(R)-1}i_\ell m^{\ell-p(r)},\\
			\left\lfloor\frac{y}{m^{p(r)}}\right\rfloor
			&=
			m^{p(R)-p(r)}V
			+\sum_{\ell=p(r)}^{p(R)-1}j_\ell m^{\ell-p(r)}.
		\end{aligned}
		\]
		For each fixed $(U,V)$, the symbols
		$(i_\ell,j_\ell)\in\Lambda$ in these sums have at most
		$(\#\Lambda)^{p(R)-p(r)}$ possible choices. These identities
		hold independently of the terminal length $\ell$.
		Therefore,
		\[
		N_r\Big(
		Q(w,R)\cap\bigcup_{\ell>p(R)}\Lambda_\ell(z)
		\Big)
		\yle_m(\#\Lambda)^{p(R)-p(r)}.
		\]
		Combining the three ranges gives
		\[
		\begin{aligned}
			N_r(\fola(z)\cap Q(w,R))
			\yle_{m,z}\;&
			1+
			\sum_{\ell=p(r)+1}^{p(R)}
			(\#\Lambda)^{\ell-p(r)}
			+(\#\Lambda)^{p(R)-p(r)}.
		\end{aligned}
		\]
		If $\#\Lambda\ge2$, geometric summation yields
		\[
		N_r(\fola(z)\cap Q(w,R))
		\yle_{m,z}
		(\#\Lambda)^{p(R)-p(r)}
		\yle_{m,\Lambda,z}
		\left(\frac Rr\right)^d.
		\]
		If $\#\Lambda=1$, the same estimate before summation gives
		\[
		N_r(\fola(z)\cap Q(w,R))
		\yle_{m,z}
		1+p(R)-p(r)
		\yle_{m,z}
		1+\log\f Rr
		\yle_{m,z,\gamma}
		\left(\frac Rr\right)^\gamma
		\]
		for every $\gamma>0$. Hence
		$\dim_{\mathrm A}\fola(z)\le d$ in both cases.
		
		If $d=0$, nonnegativity and the dimension inequalities
		already prove the proposition. We henceforth assume $d>0$.
		
		Choose an integer $C\ge1$ such that $m^C>2(\|z\|_\infty+1).$
		For every $k>C$ and every $\omega\in\Lambda_{k-C}(z)$,
		\[
		\|\omega\|_\infty
		\le(\|z\|_\infty+1)m^{k-C}
		<\frac{m^k}{2}.
		\]
		Thus $
		\Lambda_{k-C}(z)\subset \fola(z)\cap V_k^{(m)}.$ Consequently,
		\[
		\#\bigl(\mathcal O_\Lambda^+(z)\cap V_k^{(m)}\bigr)
		\ge(\#\Lambda)^{k-C}=m^{(k-C)d}.
		\]
		Since consecutive windows have a fixed ratio of side lengths,
		monotonicity gives
		$\dimLM\mathcal O_\Lambda^+(z)\ge d$.

		Let $\mu_k$ be the uniform probability measure on this
		finite planar set; explicitly,
		\[
		\mu_k(E)
		=
		\frac{\#(E\cap\Lambda_{k-C}(z))}
		{(\#\Lambda)^{k-C}},
		\qquad E\subseteq\mathbb Z^2.
		\]
		Let $U$ be a lattice cube of side $r=d(U)$.
		Suppose first that $1\le r\le m^{k-C}$.
		Since $r/m<m^{p(r)}\le r$, the cube $U$ meets at most
		$O_m(1)$ grid cells of side $m^{p(r)}$.
		The translation $m^{k-C}z$ is a multiple of $m^{p(r)}$
		in both coordinates. Each grid cell meeting
		$\Lambda_{k-C}(z)$ therefore fixes the highest
		$k-C-p(r)$ full symbols and leaves precisely the lowest
		$p(r)$ symbols free. It consequently contains exactly
		$(\#\Lambda)^{p(r)}$ points of $\Lambda_{k-C}(z)$.
		Hence
		\[
		\begin{aligned}
			\mu_k(U)\yle_m
			(\#\Lambda)^{p(r)-(k-C)}=
			m^{Cd}
			\Big(\frac{m^{p(r)}}{m^k}\Big)^d\le
			m^{Cd}\left(\frac r{m^k}\right)^d
		\end{aligned}
		\]
		up to the same constant depending only on $m$.
		For $r>m^{k-C}$, the corresponding bound follows from
		\[
		\mu_k(U)\le1
		\le m^{Cd}\left(\frac r{m^k}\right)^d.
		\]
		We have therefore obtained a constant
		$K=K(m,\Lambda,z)\ge1$, independent of $k$ and $U$, such that
		\[
		\mu_k(U)\le K\left(\frac{d(U)}{m^k}\right)^d
		\]
		for every lattice cube $U$. Fix $0<\alpha\le d$. If $d(U)\le m^k$, then
		\[
		\left(\frac{d(U)}{m^k}\right)^d
		\le
		\left(\frac{d(U)}{m^k}\right)^\alpha.
		\]
		If $d(U)>m^k$, we instead use
		\[
		\mu_k(U)\le1
		\le\left(\frac{d(U)}{m^k}\right)^\alpha.
		\]
		Thus, in either case,
		\[
		\mu_k(U)
		\le K\left(\frac{d(U)}{m^k}\right)^\alpha.
		\]
		Now let $\{U_i\}$ be any lattice-cube cover of
		$\fola(z)\cap V_k^{(m)}$. Since $\mu_k$ is supported on this set,
		\[
		1
		\le\sum_i\mu_k(U_i)
		\le
		K\sum_i\left(\frac{d(U_i)}{m^k}\right)^\alpha.
		\]
		Taking the infimum over all such covers and using
		$d(V_k^{(m)})=m^k$, we obtain
		\[
		\nu_\alpha(\fola(z),V_k^{(m)})\ge K^{-1},
		\qquad k>C.
		\]
		Proposition \ref{base independence}(i) therefore implies
		$\dimL \fola(z)\ge d$. 	This proves the theorem.

	\end{proof}

	\section{The lower entropy index}\label{section lower entropy}
	
	Changing the base may alter the value of this index, as
	Example~\ref{lower entropy radix counterexample} shows. Moreover, the
	annuli in its definition cannot in general be replaced by windows; see
	Example~\ref{lower entropy window counterexample}. We therefore keep
	both the annuli and the base $t$ explicit throughout this section.
	Nevertheless, we prove that the index is independent of the base for
	the class of sets considered here. The conclusion for the index defined
	in the introduction follows by taking $t=2$.
	
	\subsection[Annular localization of the model]{Annular localization of the model}
	
	Unlike the packing calculation, the lower bound here requires a
	model block contained in one annulus. The following lemma chooses
	that block before any small scale is selected, so its estimates hold
	simultaneously over the entire admissible range of $r$.
	
	\begin{lemma}\label{annular model localization}
		Suppose $m>n$, $z=(u,v)\in\Z^2$, and $\#\La\ge2$. For every $t>1$ and every
		$\varepsilon\in(0,1)$, there are infinitely many integers $j$ for which
		\begin{equation}\label{annular lower bound}
			P_r(\fola(z)\cap S_j^{(t)})\yge_{m,n,\La,z,t,\varepsilon}
			\Phi(u,r,t^j)
			\quad\text{simultaneously for all }1\le r\le t^{j(1-\varepsilon)}.
		\end{equation}
	\end{lemma}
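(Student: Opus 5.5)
Our plan is to locate, for suitable $j$, a translated copy of the model $\mE_{k,q_k}(u)$ (or of a fixed-depth approximate square inside it) that lies entirely in the annulus $S_j^{(t)}$ and has size comparable to $t^j$. Then Lemma~\ref{model descendants} gives the packing lower bound simultaneously for all admissible $r$. We will use the left-hand inclusion of Lemma~\ref{Hausdorff orbit transfer}, namely $a_k+\mE_{k-C,q_{k-C}}(u)\subseteq\fola(z)\cap V_k^{(m)}$. As usual, the translate need not lie in the annulus, since it may sit near the origin. The remedy is to pass to a deeper approximate square $Q\in\mQ_S(k-C,u)$, with $S$ a fixed integer depending on $m,n,\La,z,t$, chosen so that $a_k+Q$ is far from the origin relative to its diameter.

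First, fix $S$ large. The mixed horizontal digits $i_\ell$ for $k-C-S\le\ell<k-C$ determine the horizontal position of $Q$ up to an error $m^{k-C-S}$. Since $\#\La\ge2$, either $N_\La\ge2$, so that two horizontal digits are available, or some column has two vertical digits. In the first case we prescribe the top digit so that $|x|\gtrsim m^{k-C-1}$. For $u\notin\{0,-1\}$ this is automatic from $|x|\ge m^{k-C}$. For $u\in\{0,-1\}$ we choose the top mixed digit to be the nonendpoint one, or argue similarly with the vertical tail digits when $\eta(u)>1$. In the second case we use the vertical digits at the top level instead. Either way there is an approximate square $Q$ of depth $S$, and an absolute $c>0$, such that every point of $a_k+Q$ has sup-norm between $c\,m^{k}$ and $m^k/2$, while $Q$ lies in a cube of side $m^{k-C-S}$.

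Second, we match scales with base $t$. The point set $a_k+Q$ lies in the annular region $\{c\,m^k\le\|\cdot\|_\infty\le m^k/2\}$, whose radii ratio $1/(2c)$ is fixed. This region may straddle several annuli $S_j^{(t)}$, so we cannot conclude directly. Instead we sharpen the digit choice so that the sup-norm of $a_k+Q$ is confined to an interval $[\rho,\rho(1+m^{-S'})]$, by prescribing a few further top digits. For infinitely many $k$, in fact for all large $k$ after adjusting these digits, such an interval avoids the boundary points $t^{j}/2$. We then obtain $a_k+Q\subseteq S_j^{(t)}$ with $t^j\asymp m^k$. If a thin interval straddles $t^{j}/2$, we shift to a neighbouring choice of top digits, using that $S$ extra digits give $\gtrsim m^{S'}$ disjoint subintervals, at most one of which meets the boundary. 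This is the main obstacle: $\tfrac12 t^j$ can be arbitrarily close to the model's natural boundaries $m^k$. It requires careful bookkeeping of which digit combinations are admissible near the endpoint columns when $u\in\{0,-1\}$, and in the degenerate case $N_\La=1$ it must be done with vertical digits instead.

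Finally, with $Q$ fixed of depth $S'$ and $a_k+Q\subseteq\fola(z)\cap S_j^{(t)}$, Lemma~\ref{model descendants} gives $P_r(a_k+Q)\ge C_{S'}\Phi(u,r,m^{k-C})$ for all $1\le r\le m^{k-C-S'}$. By \eqref{Phi comparable scales} this is $\asymp\Phi(u,r,t^j)$. The range $r\le t^{j(1-\varepsilon)}$ is contained in $r\le m^{k-C-S'}$ for large $j$, since $S'$ is fixed. This establishes \eqref{annular lower bound} for the infinitely many $j$ produced.
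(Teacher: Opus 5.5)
Your two ends agree with the paper. You place a translate of $\mE_{k,q_k}(u)$ in the orbit, find a fixed-depth approximate square of it inside one annulus $S_j^{(t)}$ with $t^j\asymp m^k$, and then apply Lemma~\ref{model descendants} and \eqref{Phi comparable scales}. The localization into a single annulus is, however, the whole content of the lemma, and your argument for it does not work as written.

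\textbf{The pigeonhole count.} After prescribing $S'$ top digits there are $N_\La^{S'}$ horizontal and $\eta(u)^{e_{S'}}$ vertical choices, not $\gtrsim m^{S'}$. These choices must outnumber the bad boxes for each of the boundedly many values $t^i/2$ in $[c\,m^k,m^k/2]$, not for a single value.

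\textbf{The claim that at most one subinterval meets the boundary.} This is false when $N_\La\ge2$ and $\eta(u)>1$, because then both coordinates live at scale $m^k$. For instance, if $u=0$ and $|v|$ is large, every point has $\|w\|_\infty=|y|$. All boxes sharing a vertical prescription then have the same sup-norm range, and a single value $t^i/2$ can hit all of them. You need a genuinely two-dimensional count. Distinct prescriptions give disjoint coordinate projections, so each of $\pm b$ lies in at most one horizontal and at most one vertical projection. Hence at most $2B\bigl(\eta(u)^{e_{S'}}+N_\La^{S'}\bigr)$ boxes are bad. This is negligible against the $\gtrsim N_\La^{S'-1}\eta(u)^{e_{S'}}$ candidates once $S'$ is large.

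\textbf{The degenerate case.} When $N_\La=\eta(u)=1$, all tail digits are $0$ and the mixed vertical digits move $y$ by at most $O(n^k)=o(m^k)$. So vertical digits can neither produce $\|w\|_\infty\gtrsim m^k$ (for $u\in\{0,-1\}$) nor shift the sup-norm off a boundary. The missing observation is the paper's. With $\mI=\{i_0\}$, every model point has the same horizontal coordinate $x_k=m^k\bigl(u+\frac{i_0}{m-1}\bigr)-\frac{i_0}{m-1}$. Here $u+\frac{i_0}{m-1}\neq0$, since otherwise $i_0$ would be the endpoint column and $\eta(u)=\#\La\ge2$. Since $|y|=o(|x_k|)$, the whole model lies in the annulus containing $(x_k,0)$, with $S=0$ and no adjustment.

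\textbf{Comparison with the paper.} With these repairs your $k$-by-$k$ pigeonhole is a valid alternative route, and it even works for all large $k$. The paper avoids this bookkeeping instead. It passes to a subsequence with $m^k/t^{j_k}\to\lambda_m$ and $n^{q_k}/t^{j_k}\to\lambda_n$. It then picks expansions $X\in K_x$ and $Y\in K_y$ from uncountable digit sets so that the limit point $W$ satisfies $\|W\|_\infty\notin\{0\}\cup\{\pm t^i/2:i\in\Z\}$. One $k$-independent prescription of $S$ top digits then keeps the normalized approximate square within $\delta$ of $W$, hence strictly inside a single annulus.
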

	
	\begin{proof}
		Fix $t>1$ and $\varepsilon\in(0,1)$. The construction in
		Lemma \ref{Hausdorff orbit transfer} gives
		\[
		T_k+\mE_{k,q_k}(u)\subset A,
		\qquad
		T_k=
		\begin{cases}
			(m^ku,n^{q_k}v),&\eta(u)>1,\\
			(m^ku,n^kv),&\eta(u)=1.
		\end{cases}
		\]
		Put $j_k=\lfloor\log_t(m^k)\rfloor$. Since $
		1\le m^kt^{-j_k}<t,1/n<n^{q_k}t^{-j_k}<t,$
		we may pass to an infinite subsequence along which
		\[
		\frac{m^k}{t^{j_k}}\longrightarrow\lambda_m\in[1,t],
		\qquad
		\frac{n^{q_k}}{t^{j_k}}\longrightarrow\lambda_n\in\Big[\f1n,t\Big].
		\]
		All limits below are taken along this subsequence. 
		
		We first show that there are fixed integers $S\ge0$ and
		$a\in\mathbb Z$ such that, for infinitely many $k$, one can choose
		$Q_k\in\mQ_S(k,u)$ satisfying
		\[
		T_k+Q_k\subset S_{j_k+a}^{(t)}.
		\]
		The depth $S$ and the sets $Q_k$ will be chosen independently of $r$.
		
		Suppose first that $N_\Lambda\ge2$ or $\eta(u)>1$. Consider
		\[
		K_x=
		\left\{\sum_{d=1}^{\infty}\iota_dm^{-d}:
		\iota_d\in\mI\right\},
		\qquad
		K_y=
		\left\{\sum_{d=1}^{\infty}\zeta_dn^{-d}:
		\zeta_d\in\mB_u\right\}.
		\]
		Each of these sets is uncountable whenever its defining alphabet
		contains at least two digits.
		If $N_\Lambda\ge2$, choose $X\in K_x$ such that
		\[
		\lambda_m(u+X)\notin
		\{0\}\cup\Big\{\pm \f {t^i}2:i\in\mathbb Z\Big\}.
		\]
		If $\eta(u)>1$, choose $Y\in K_y$ such that
		\[
		\lambda_n(v+Y)\notin
		\{0\}\cup\Big\{\pm \f {t^i}2:i\in\mathbb Z\Big\}.
		\]
		These choices are possible because the excluded sets are countable.
		When $\eta(u)=1$, put $Y=0$. The only case in which $X$ has not yet been specified is
		$N_\Lambda=1$ and $\eta(u)>1$. Writing $\mI=\{i_0\}$, the definition
		of $\eta(u)$ gives $i_0=-(m-1)u.$
		Thus $K_x$ consists of the single point
		\[
		X=\frac{i_0}{m-1}=-u.
		\]
		Consequently, in all cases currently under consideration, the vector
		\[
		W=
		\begin{cases}
			\bigl(\lambda_m(u+X),\lambda_n(v+Y)\bigr),
			&\eta(u)>1,\\
			\bigl(\lambda_m(u+X),0\bigr),
			&\eta(u)=1
		\end{cases}
		\]
		satisfies
		\[
		c:=\|W\|_\infty>0,
		\qquad
		c\notin\Big\{\pm \f {t^i}2:i\in\mathbb Z\Big\}.
		\]
		Choose $a\in\mathbb Z$ such that $t^{a-1}<2c<t^a$, and put
		\[
		\delta=\frac12\min\left\{c-\frac{t^{a-1}}2,\,\frac{t^a}2-c\right\}>0.
		\]
		Fix an integer $S\ge1$ large enough that $ntm^{-S}<\delta.$ 
		For this fixed $S$ and all sufficiently large $k>S$,
		\[
		q(m^{k-S})\ge k,
		\qquad
		t_S=0,
		\qquad
		e_S=q_k-q(m^{k-S})>\tau S-1.
		\]
		Now write the above choices of $X$ and $Y$ in the form
		\[
		X=\sum_{d=1}^{\infty}\iota_dm^{-d},
		\qquad
		Y=\sum_{d=1}^{\infty}\zeta_dn^{-d}.
		\]
		By our choice above, when $\eta(u)=1$, we have $\zeta_d=0$ for every $d\ge1$.
		Define
		\[
		Q_k=\Psi_k\left(\left\{
		\omega\in\Omega_k(u):
		\begin{array}{ll}
			i_{k-d}=\iota_d,&1\le d\le S,\\
			j_{q_k-d}=\zeta_d,&1\le d\le e_S
		\end{array}
		\right\}\right)\in\mQ_S(k,u).
		\]
		For every $(x,y)\in Q_k$, the prescribed digits give
		\begin{gather*}
			\left|\frac{x}{m^k}-X\right|=\left|\sum_{d=0}^{k-S-1}i_dm^{d-k}-\sum_{d>S}\iota_dm^{-d}\right|\le m^{-S},\\
			\left|\frac{y}{n^{q_k}}-Y\right|=\left|\sum_{d=0}^{q_k-e_S-1}j_dn^{d-q_k}-\sum_{d>e_S}\zeta_dn^{-d}\right|\le n^{-e_S}\le nm^{-S}.
		\end{gather*}
		When $\eta(u)=1$, all tail digits are zero, so we also have
		$0\le y<n^k$. Note that for every $(x,y)\in Q_k$, we have
		\begin{align*}
			&\frac{T_k+(x,y)}{t^{j_k}}-W\\
			&=\begin{cases}
				\big((m^kt^{-j_k}-\lam_m)u+xt^{-j_k}-\lam_m X,\,(n^{q_k}t^{-j_k}-\lam_n)v+yt^{-j_k}-\lam_n Y\big),\quad  &\eta(u)>1,\\
				\big((m^kt^{-j_k}-\lam_m)u+xt^{-j_k}-\lam_m X,\,(n^kv+y)t^{-j_k}\big), & \eta(u)=1.
			\end{cases}
		\end{align*}
		Using these estimates and the convergence of the normalization
		factors, we obtain
		\[
		\limsup_{k\to\infty}
		\sup_{w\in T_k+Q_k}
		\left\|\frac{w}{t^{j_k}}-W\right\|_\infty
		\le ntm^{-S}<\delta.
		\]
		Here, when $\eta(u)=1$, the vertical coordinate tends uniformly
		to zero because
		\[
		\sup_{(x,y)\in Q_k}
		\frac{|n^kv+y|}{t^{j_k}}
		\le (|v|+1)\frac{n^k}{t^{j_k}}\le (|v|+1)t\frac{n^k}{m^k}
		\longrightarrow0.
		\]
		Therefore, for all sufficiently large $k$ and every
		$w\in T_k+Q_k$,
		\[
		\frac{t^{a-1}}2
		<
		\left\|\frac{w}{t^{j_k}}\right\|_\infty
		<
		\frac{t^a}2.
		\]
		Both inequalities are strict, and hence $T_k+Q_k\subset S_{j_k+a}^{(t)}.$
		
		It remains to consider $N_\Lambda=\eta(u)=1$.
		Write $\mI=\{i_0\}$. Every point of
		$T_k+\mE_{k,q_k}(u)$ has the same horizontal coordinate
		\[
		x_k
		=
		m^ku+i_0\frac{m^k-1}{m-1}
		=
		m^k\left(u+\frac{i_0}{m-1}\right)
		-\frac{i_0}{m-1},
		\]
		and its vertical coordinate satisfies $|y|\le(|v|+1)n^k.$
		We claim that
		\[
		u+\frac{i_0}{m-1}\ne0.
		\]
		Indeed, equality would imply
		$(u,i_0)=(0,0)$ or $(-1,m-1)$. The unique column would then be the
		corresponding endpoint column, giving
		$\eta(u)=\#\Lambda\ge2$, a contradiction. It follows that
		\[
		\frac{|x_k|}{t^{j_k}}
		\longrightarrow
		\lambda_m\left|u+\frac{i_0}{m-1}\right|>0,
		\qquad
		\frac{(|v|+1)n^k}{|x_k|}\longrightarrow0.
		\]
		For large $k$, let $b_k\ge2$ be the unique index such that
		$(x_k,0)\in S_{b_k}^{(t)}$. Then $t^{b_k-1}\le 2|x_k|\le t^{b_k}.$
		Thus, for all sufficiently large $k$, every vertical coordinate
		under consideration satisfies
		\[
		|y|<\frac{|x_k|}{t}\le\frac{t^{b_k-1}}2.
		\]
		Consequently, $T_k+\mE_{k,q_k}(u)\subset S_{b_k}^{(t)}.$
		The convergence of $|x_k|t^{-j_k}$ and the preceding bounds on
		$|x_k|$ show that $b_k-j_k$ is bounded. Passing to a further
		infinite subsequence, we may assume that $b_k-j_k=a$ for some
		fixed integer $a$. Taking $S=0$ and
		$Q_k=\mE_{k,q_k}(u)$ gives the required inclusion in this case
		as well.
		
		We now apply Lemma~\ref{model descendants}. Since
		$t^{j_k}\le m^k$,
		\[
		\frac{t^{(j_k+a)(1-\varepsilon)}}{m^{k-S}}
		\le
		t^{a(1-\varepsilon)}m^{S-\varepsilon k}
		\longrightarrow0.
		\]
		Hence, for all sufficiently large $k$ in the chosen subsequence,
		every $1\le r\le t^{(j_k+a)(1-\varepsilon)}$ also satisfies
		$r\le m^{k-S}$. For all these scales, translation invariance
		of packing numbers gives
		\[
		P_r(\fola(z)\cap S_{j_k+a}^{(t)})\ge P_r(Q_k)\ge c_S\Phi(u,r,m^k).
		\]
		Finally, $t^{a-1}<t^{j_k+a}/m^k\le t^a,$
		so
		\[
		|p(t^{j_k+a})-k|+|q(t^{j_k+a})-q_k|=O_{m,n,t,a}(1).
		\]
		All exponents in the definition of $\Phi$ therefore change
		by a bounded amount, uniformly in $r$. It follows that
		\[
		\Phi(u,r,m^k)
		\asy_{m,n,\Lambda,t,a}
		\Phi(u,r,t^{j_k+a}).
		\]
		The resulting lower bound holds simultaneously for every
		$1\le r\le t^{(j_k+a)(1-\varepsilon)}$, because $Q_k$ was chosen
		independently of $r$. Since $j_k\to\infty$, the indices
		$j=j_k+a$ include infinitely many distinct integers.
		This proves the lemma.
	\end{proof}
	
	\subsection[The lower entropy formula]{The lower entropy formula}
	
	\begin{proposition}\label{lower entropy formula}
		For every $t>1$ and every $z=(u,v)\in\Z^2$,
		\[
		\delta_t(\fola(z))=
		\begin{cases}
			\displaystyle\min\left\{\log_m\frac{\#\La}{\eta(u)},
			\log_mN_\La\right\}+\log_n\eta(u),\quad&m>n,\\
			\log_m\#\La,&m=n.
		\end{cases}
		\]
	\end{proposition}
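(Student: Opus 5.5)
The case $m=n$ is immediate from what has already been proved. Proposition~\ref{base independence}(iii) gives $\dimdH A\le\delta_t(A)\le\dimUM A$ for every base $t>1$. In the proof of Theorem~\ref{main theorem m=n}, the values $\dimdH\fola(z)$ and $\dimUM\fola(z)$ were shown to equal $\log_m\#\La$. So we assume $m>n$ and write $A=\fola(z)$, $\ud=\ud(u)=\min\{D_0(u),D_1(u)\}$. We also dispose of $\#\La=1$ at the outset. In that case $A$ is the orbit of a single affine map and has at most one point in each dyadic annulus for large $j$. Hence $\delta_t(A)=0$, which agrees with the formula, since $N_\La=\eta(u)=1$ there.

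\emph{Upper bound.} Fix $\alpha>\ud$. First suppose $\ud=D_0$. We take $r=1$ and use $P_1(A\cap S_j^{(t)})\le\#(A\cap V_j^{(t)})$; by Proposition~\ref{mass} this is $t^{j(D_0+o(1))}$, so the quantity in the definition tends to $0$. (Equivalently, this is the inequality $\delta_t\le\dimUM$.) Next suppose $\ud=D_1$. We fix $\varepsilon$ with $0<\varepsilon<1-1/\tau$ and take $r_j=n^{p(t^j)}$, which is admissible for large $j$. Cover $S_j^{(t)}\subseteq V_j^{(t)}\subseteq V_k^{(m)}$, where $k=p(t^j)+O(1)$. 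Then \eqref{packing estimate1}, \eqref{Phi comparable scales} and the endpoint estimate \eqref{endpoint count exponents} give
\[
P_{r_j}(A\cap S_j^{(t)})\yle(1+j)\,(t^j/r_j)^{D_1}.
\]
Therefore $(r_j/t^j)^\alpha P_{r_j}\yle(1+j)(t^j/r_j)^{D_1-\alpha}\to0$, because $t^j/r_j\to\infty$ geometrically. In both cases $\delta_t(A)\le\ud$.

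\emph{Lower bound.} Fix $\alpha<\ud$ and an arbitrary $\varepsilon\in(0,1)$. Lemma~\ref{annular model localization} gives infinitely many $j$ for which $P_r(A\cap S_j^{(t)})\yge\Phi(u,r,t^j)$ holds simultaneously for all $1\le r\le t^{j(1-\varepsilon)}$. For these $j$, Lemma~\ref{count interpolation} yields $\Phi(u,r,t^j)\yge(t^j/r)^{\ud}$ uniformly in $r$. Hence
\[
\min_{1\le r\le t^{j(1-\varepsilon)}}\Big(\frac r{t^j}\Big)^\alpha P_r(A\cap S_j^{(t)})\yge\min_r\Big(\frac{t^j}{r}\Big)^{\ud-\alpha}\ge t^{\varepsilon j(\ud-\alpha)}\to\infty
\]
along this subsequence. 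So the limit is not $0$ for any $\varepsilon$, and $\delta_t(A)\ge\ud$.

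The delicate point is already handled by Lemma~\ref{annular model localization}: a whole model block has to sit inside a single annulus, uniformly over all admissible $r$, because the lower index takes a minimum over $r$ and annuli may split the orbit. The remaining care is in the upper bound when $\ud=D_1$. There the window estimate \eqref{packing estimate1} has to be applied after comparing the $t$-adic scale with the nearest $m$-adic scale via \eqref{Phi comparable scales}. The polynomial factor $(1+j)$ is harmless because the decay is exponential.
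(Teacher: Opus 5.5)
Your proposal is correct and follows the paper's proof essentially step for step. The upper bound transfers the window bound \eqref{packing estimate1} to base $t$ via \eqref{Phi comparable scales}, using the endpoint choice $r=1$ or $r_j=n^{p(t^j)}$. The lower bound combines Lemma~\ref{annular model localization} with \eqref{uniform count exponents}, and the case $m=n$ follows from Proposition~\ref{base independence}(iii).

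The only quibble concerns your separate treatment of $\#\La=1$. It is unnecessary, since then $\ud=0$, so the lower bound is trivial and your general upper bound already applies. As stated it is also slightly off: a $t$-adic annulus can contain more than one orbit point when $t$ is large. It contains only boundedly many, however, which is all you need.
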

	
	\begin{proof}
		First suppose $m>n$. If
		$k=\lceil\log_m(t^j)\rceil$, then
		$V_j^{(t)}\subseteq V_k^{(m)}$ and $t^j\le m^k<m t^j$.
		By \eqref{packing estimate1} and \eqref{Phi comparable scales},
		\begin{equation}\label{general base window count}
			P_r(\fola(z)\cap S_j^{(t)})\yle_{m,n,\La,z,t}
			(1+j)\Phi(u,r,t^j),\qquad1\le r\le t^j.
		\end{equation}
		Fix $\alpha>\ud(u)$ and choose once and for all
		$0<\varepsilon_*<1-1/\tau$. If $\ud=D_0$, the admissible choice
		$r=1$ and \eqref{endpoint count exponents} give
		\[
		\inf_{1\le r\le t^{j(1-\varepsilon_*)}}
		\left(\frac r{t^j}\right)^\alpha P_r(\fola(z)\cap S_j^{(t)})
		\yle(1+j)t^{-j(\alpha-D_0)}\longrightarrow0.
		\]
		If $\ud=D_1$, take $r_j=n^{p(t^j)}$. Since
		$r_j\asy_{m,n}t^{j/\tau}$, it is admissible for all sufficiently
		large $j$. The second endpoint estimate gives
		\[
		\inf_{1\le r\le t^{j(1-\varepsilon_*)}}
		\left(\frac r{t^j}\right)^\alpha P_r(\fola(z)\cap S_j^{(t)})\yle(1+j)\Big(\f {t^j}{r_j}\Big)^{D_1-\alpha}\yle_{\alpha}(1+j)t^{-j(1-1/\tau)(\alpha-D_1)}
		\longrightarrow0.
		\]
		Thus the convergence required by the definition holds for one
		fixed $\varepsilon_*$, proving $\delta_t(A)\le \ud$.
		
		If $\ud=0$, nonnegativity gives the reverse inequality. Otherwise
		$\#\La\ge2$. Fix $0\le\alpha<\ud$ and now let
		$\varepsilon\in(0,1)$ be arbitrary. Apply
		Lemma \ref{annular model localization} with this $t,\varepsilon$.
		On its infinite sequence of annuli, every admissible scale satisfies
		\[
		\left(\frac r{t^j}\right)^\alpha P_r(\fola(z)\cap S_j^{(t)})
		\yge\left(\frac r{t^j}\right)^\alpha\Phi(u,r,t^j)
		\yge\Big(\f {t^j}r\Big)^{\ud-\alpha}
		\ge t^{\varepsilon j(\ud-\alpha)}\longrightarrow\infty.
		\]
		The second inequality is \eqref{uniform count exponents}, and all
		constants are uniform in $r,j$ on this sequence.
		Taking the infimum over $r$ preserves the lower bound. Since this
		argument applies separately to every $\varepsilon$, no
		$\varepsilon$ can give the convergence in the definition. Hence
		$\delta_t(A)\ge \ud$.
		
		Finally, suppose $m=n$. Apply Proposition~\ref{base independence}(iii)
		to $A=\fola(z)$. By Theorem~\ref{main theorem m=n}, the two endpoints
		of~\eqref{general lower index comparison} both equal
		$\log_m\#\La$, so $\delta_t(\fola(z))=\log_m\#\La$. This gives the remaining
		case of the formula. This also completes the proof of
		Theorem \ref{main theorem m>n}.
	\end{proof}
	
	\section[Invariant lattice sets]{Invariant lattice sets of $\{S_{(i,j)}\}_{(i,j)\in\Lambda}$}\label{section invariant}
	
	We now apply the orbit dimension formulae to invariant subsets
	of $\Z^2$. The reduction to finitely many orbits uses the decomposition
	theorem of~\cite{MX26}. We recall its setting and statement before
	specializing to the maps in~\eqref{map}.
	
	Let $(X,d)$ be an unbounded, locally compact, complete metric space. A finite family $\mathcal F=\{f_i\}_{i=1}^p$ of distinct self-maps of $X$ is called a \emph{reverse iterated function system} (RIFS) if there exists $r>1$ such that
	\begin{equation}\label{def_r}
		d\bigl(f_i(x),f_i(y)\bigr)
		\ge r\,d(x,y)
		\qquad
		(x,y\in X,\ 1\le i\le p).
	\end{equation}
	A nonempty set $K\subseteq X$ is said to be \emph{invariant} under $\mathcal F$ or an \emph{invariant set} of $\F$ if
	\begin{equation}\label{eq: FUTF}
		K=\bigcup_{i=1}^p f_i(K).
	\end{equation}
	An invariant set $K$ is called \emph{non-overlapping} if the union in \eqref{eq: FUTF} is disjoint, or equivalently, if $f_i(K)\cap f_j(K)=\varnothing$ whenever $i\ne j$.
	
	The forward orbit of $a\in X$ is defined by
	\[
	\fo_{\F}(a)=\{g(a):g\in G^*(\F)\},
	\]
	where $G^*(\F)$ denotes the semigroup of distinct maps generated by $\F$. 	Let $P(\F)$ be the set of fixed points of maps in $G^*(\F)$; namely,
	\[
	P(\F)=\bigl\{x\in X:g(x)=x\text{ for some }g\in G^*(\F)\bigr\}.
	\]

	\begin{theorem}\cite{MX26}\label{discreteinvariant}
		Let $\F=\{f_i\}_{i=1}^m$ be an RIFS on $(X,d)$. If $K$ is a locally finite (i.e., its intersection with every bounded set is finite) invariant set of $\F$, then $K$ is a finite union of forward orbits. More precisely, we have
		\[
		K=\bigcup_{a\in K\cap P(\F)}\fo_\F(a),
		\]
		where $K\cap P(\F)$ is finite.
	\end{theorem}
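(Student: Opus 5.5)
Since Theorem~\ref{discreteinvariant} is quoted from \cite{MX26}, the plan is to reprove it in the general RIFS setting using only the expansion property \eqref{def_r} and local finiteness. The argument has three stages: backward chains, eventual periodicity of those chains, and the finiteness of $K\cap P(\F)$. Throughout, let $r>1$ be the expansion constant.

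\emph{Backward chains.} Invariance $K=\bigcup_i f_i(K)$ means every $x\in K$ has a preimage: there are $i$ and $x'\in K$ with $f_i(x')=x$. Iterating gives a backward chain $x=x_0,x_1,x_2,\dots$ in $K$ with $x_{\ell}=f_{i_{\ell+1}}(x_{\ell+1})$. The key observation is that such chains stay bounded. Fix a base point $o\in X$ and put $M=\max_i d(f_i(o),o)$. Then
\[
d(x_{\ell+1},o)\le r^{-1}d(x_\ell,f_{i_{\ell+1}}(o))\le r^{-1}\bigl(d(x_\ell,o)+M\bigr).
\]
Iterating this, the chain eventually enters the fixed ball $B=B(o,R_0)$ with $R_0=2M/(r-1)+1$, and it never leaves $B$ afterwards. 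Since $K$ is locally finite, $K\cap B$ is finite.

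\emph{Eventual periodicity.} A chain that stays in the finite set $K\cap B$ must repeat a point: $x_a=x_b$ for some $a<b$. Then $g=f_{i_{a+1}}\circ\cdots\circ f_{i_b}$ satisfies $g(x_b)=x_a=x_b$, so $p:=x_b\in K\cap P(\F)$. Moreover $x=f_{i_1}\circ\cdots\circ f_{i_b}(p)$, so $x\in\fo_\F(p)$ when $b\ge1$. If $x$ is itself the periodic point, we instead obtain $x=g(x)\in\fo_\F(x)$. This gives $K\subseteq\bigcup_{a\in K\cap P(\F)}\fo_\F(a)$.

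\emph{The reverse inclusion.} This follows from forward invariance: $f_i(K)\subseteq K$ for every $i$, so every orbit of a point of $K$ lies in $K$.

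\emph{Finiteness of $K\cap P(\F)$.} This is the step I expect to be the main subtlety. Suppose $g(p)=p$ with $g=f_{i_1}\circ\cdots\circ f_{i_L}$. The cyclic chain $p,\ f_{i_L}(p),\ \dots$ read backwards is a backward chain of $p$ that is periodic. Applying the distance estimate along this repeating chain, the periodic points in it cannot exceed distance $R_0$ from $o$: otherwise the distances would strictly decrease along the cycle and could never return to their starting value. Hence $P(\F)\cap K\subseteq B$, which is finite by local finiteness. The care needed here is to handle the inequality correctly on a cycle: take the point of the cycle that is farthest from $o$ and derive a contradiction from the contraction bound if it lies outside $B$.
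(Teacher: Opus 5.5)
The paper gives no proof of this statement; it is quoted from \cite{MX26}, so there is nothing in the paper to compare your argument with. Your proof is correct and self-contained. The bound $d(x_{\ell+1},o)\le r^{-1}(d(x_\ell,o)+M)$ traps every backward chain in $B$ from some index on. Pigeonhole in the finite set $K\cap B$ then yields $p=x_b\in K\cap P(\F)$ with $x=f_{i_1}\circ\cdots\circ f_{i_b}(p)\in\fo_{\F}(p)$. Forward invariance $f_i(K)\subseteq K$ gives the reverse inclusion.

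Two small points need tidying.

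First, $a<b$ forces $b\ge1$, so $f_{i_1}\circ\cdots\circ f_{i_b}$ is always a nonempty composition. Your separate case ``$x$ is itself the periodic point'' is therefore unnecessary.

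Second, in the finiteness step, the phrase that distances ``strictly decrease along the cycle'' is not accurate as stated. Inside $B$ the bound $r^{-1}(d+M)$ can exceed $d$. Use the farthest-point version you mention at the end instead. Let $y_j$ be a point of the cycle at maximal distance $D$ from $o$, and let $f$ be the map in the cycle sending $y_j$ to its cyclic predecessor. Then
\[
D\le r^{-1}d(f(y_j),f(o))\le r^{-1}\bigl(d(f(y_j),o)+M\bigr)\le r^{-1}(D+M),
\]
so $D\le M/(r-1)$. This proves more than you need: $P(\F)\subseteq\overline{B}(o,M/(r-1))$ independently of $K$, so the finiteness of $K\cap P(\F)$ follows at once from local finiteness.

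Your argument also uses neither completeness nor local compactness of $X$, nor the distinctness of the $f_i$. In the lattice setting of the paper, this bound on $P(\F)$ is consistent with the explicit description $P(\La)\subseteq\{0,-1\}^2$ in Lemma~\ref{fixedpoint}.
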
 
	
	Since every subset of $\mathbb Z^2$ is locally finite, Theorem \ref{discreteinvariant} implies that every invariant set of $\{S_{(i,j)}\}_{(i,j)\in\Lambda}$ is a finite union of forward orbits based at points of $P(\Lambda)$, where
	\[
	P(\Lambda)=\{z\in \Z^2:S_{(\bi,\bj)}(z)=z\text{\ for some\ } (\bi,\bj)\in \La^k \text{\ and\ } k\in \N^+\}.
	\]
	For the affine maps considered here, the set $P(\La)$ admits
	a particularly simple description.
	\begin{lemma}\label{fixedpoint}
		We have 
		\begin{align*}
			P(\Lambda)&=\big\{(u,v)\in\{0,-1\}^2:
			(-(m-1)u,-(n-1)v)\in\La\big\}\\
			&=\big\{z\in\Z^2:S_{(i,j)}(z)=z\text{\ for some\ } (i,j)\in \La\}.
		\end{align*}
	\end{lemma}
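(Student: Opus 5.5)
We will prove the two equalities by showing that the three sets coincide through a chain of inclusions. Write $P_1$ for the set of $(u,v)\in\{0,-1\}^2$ with $(-(m-1)u,-(n-1)v)\in\La$, and $P_2$ for the set of points fixed by a single map $S_{(i,j)}$. Clearly $P_2\subseteq P(\La)$, since words of length one are allowed. We will show $P_1=P_2$ and $P(\La)\subseteq P_1$.

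For $P_1=P_2$, the equation $S_{(i,j)}(u,v)=(u,v)$ reads $(m-1)u=-i$ and $(n-1)v=-j$. Since $0\le i\le m-1$, the first equation forces $u\in\{0,-1\}$ with $i=-(m-1)u$. Similarly $v\in\{0,-1\}$ with $j=-(n-1)v$. Conversely, any point of $P_1$ is fixed by $S_{(-(m-1)u,-(n-1)v)}$.

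The main step is $P(\La)\subseteq P_1$. Suppose $S_{(\bi,\bj)}(z)=z$ for a word of length $k$. By the definition of $\La_k(z)$, this means
\[
u=m^ku+\sum_{\ell=0}^{k-1}i_\ell m^\ell
\quad\text{and}\quad
v=n^kv+\sum_{\ell=0}^{k-1}j_\ell n^\ell .
\]
The horizontal equation gives $(m^k-1)(-u)=\sum i_\ell m^\ell\in[0,m^k-1]$, so $-u\in\{0,1\}$. If $u=0$, then $\sum i_\ell m^\ell=0$, so all $i_\ell=0$. If $u=-1$, then $\sum i_\ell m^\ell=m^k-1$, so all $i_\ell=m-1$, by uniqueness of base-$m$ expansions. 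The same argument in base $n$ gives $v\in\{0,-1\}$ and $j_\ell=-(n-1)v$ for every $\ell$. Hence every letter of the word equals $(-(m-1)u,-(n-1)v)$, and since the letters lie in $\La$, we get $z\in P_1$.

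I expect no real obstacle here. The only point needing care is the digit-uniqueness argument, which pins down every letter of the word rather than just the sum.
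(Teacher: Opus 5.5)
Your proof is correct and follows essentially the same route as the paper. Both solve the fixed-point equation for a word of length $k$, use the digit bounds to force $(u,v)\in\{0,-1\}^2$, invoke uniqueness of base-$m$ and base-$n$ expansions to make every letter equal $(-(m-1)u,-(n-1)v)$, and obtain the converse from a single map $S_{(i,j)}$. Your explicit chain $P(\Lambda)\subseteq P_1=P_2\subseteq P(\Lambda)$ spells out the inclusion with the single-map set, which the paper leaves implicit.
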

	\begin{proof}
		Let $(u,v)\in P(\La)$. Then there exist $k\ge1$ and
		$(\bi,\bj)=(i_0\ldots i_{k-1},j_0\ldots j_{k-1})\in\La^k$
		such that $S_{(\bi,\bj)}(u,v)=(u,v)$. Hence
		\begin{equation}\label{iterate}
			(u,v) =\Big(m^ku+\sum_{\ell=0}^{k-1}i_{\ell}m^\ell,n^kv+\sum_{\ell=0}^{k-1}j_{\ell}n^\ell\Big).
		\end{equation}
		Consequently,	
		\[
		u=\dfrac{\sum_{\ell=0}^{k-1}i_{\ell}m^\ell}{1-m^k}\in \Z,\qquad v=\dfrac{\sum_{\ell=0}^{k-1}j_{\ell}n^\ell}{1-n^k}\in \Z.
		\]
		Since $\La\subseteq\{0,\ldots,m-1\}\times \{0,\ldots,n-1\}$, we obtain $(u,v)\in\{0,-1\}^2$. Moreover, $u=0$ forces $i_\ell=0$ for every $\ell$,
		whereas $u=-1$ forces $i_\ell=m-1$ for every $\ell$.
		Similarly, $v=0$ forces $j_\ell=0$ for every $\ell$,
		whereas $v=-1$ forces $j_\ell=n-1$ for every $\ell$. Thus
		\[
		(i_\ell,j_\ell)=(-(m-1)u,-(n-1)v)\in\La
		\]
		for every $0\le\ell<k$.
		
		Conversely, if $(u,v)\in\{0,-1\}^2$ and
		$(i,j)=(-(m-1)u,-(n-1)v)\in\La$, then
		$S_{(i,j)}(u,v)=(u,v)$, so $(u,v)\in P(\La)$.
		This completes the proof.
	\end{proof}
	
	The following lemma shows that, whenever $P(\La)$ is non-empty,
	the forward orbit of each point in $P(\La)$ is an affine image of a discrete Bedford--McMullen carpet.
	\begin{lemma}\label{allsame}
		Let $(u,v)\in\{0,-1\}^2$. Define $T_{u,v}:\Z^2\longrightarrow \Z^2$ by
		\[
		T_{u,v}(z)=\begin{pmatrix}
			2u+1&0\\
			0 &2v+1
		\end{pmatrix}z-\begin{pmatrix}
			(m-1)u\\
			(n-1)v
		\end{pmatrix},\qquad z\in\Z^2.
		\] 
		Then
		\[
		\fola (u,v)=T_{u,v}\big(\fo_{T_{u,v}(\La)}(0,0)\big)+\begin{pmatrix}
			mu\\
			nv
		\end{pmatrix}.
		\]
	\end{lemma}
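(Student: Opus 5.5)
The plan is to verify the identity one coordinate and one word length at a time. The key observation is that $T_{u,v}$ acts on each coordinate either as the identity (when the coordinate of the base point is $0$) or as the reflection $x\mapsto(m-1)-x$, respectively $y\mapsto(n-1)-y$ (when it is $-1$). Put $\epsilon_u=2u+1$ and $\epsilon_v=2v+1$, both in $\{\pm1\}$. For $u\in\{0,-1\}$ we have $\epsilon_u^2=1$ and $\epsilon_u u=-u$. Hence on digits the map $i\mapsto i'=\epsilon_u i-(m-1)u$ is an involution of $\{0,\ldots,m-1\}$, and similarly for $j\mapsto j'=\epsilon_v j-(n-1)v$ on $\{0,\ldots,n-1\}$. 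In particular $T_{u,v}$ restricts to a bijection $\La\to T_{u,v}(\La)$, and $T_{u,v}(\La)$ is again an admissible digit set in $\{0,\ldots,m-1\}\times\{0,\ldots,n-1\}$. So the right-hand side of the statement makes sense, and it suffices to compare the two sets level by level, showing $\La_k(u,v)=T_{u,v}\bigl((T_{u,v}(\La))_k(0,0)\bigr)+(mu,nv)$ for every $k\ge1$, and then taking the union over $k$.

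For the horizontal coordinate, fix $k\ge1$ and a word $(i_\ell,j_\ell)_{\ell<k}\in\La^k$. Using $m^k=1+(m-1)\sum_{\ell<k}m^\ell$, rewrite
\[
m^ku+\sum_{\ell=0}^{k-1}i_\ell m^\ell=u+\sum_{\ell=0}^{k-1}\bigl(i_\ell+(m-1)u\bigr)m^\ell .
\]
Since $i_\ell+(m-1)u=\epsilon_u i'_\ell$, the left side equals $u+\epsilon_u x'$, where $x'=\sum_{\ell<k} i'_\ell m^\ell$ is the horizontal coordinate of the orbit point of $(0,0)$ under the transformed digit word. On the other side, the first coordinate of $T_{u,v}(x',y')+(mu,nv)$ is $\epsilon_u x'-(m-1)u+mu=\epsilon_u x'+u$, so the two agree. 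The vertical coordinate is handled identically, with $n$, $v$, $\epsilon_v$ and $j'_\ell$ in place of $m$, $u$, $\epsilon_u$ and $i'_\ell$.

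Because the digit map $(i,j)\mapsto(i',j')$ is a bijection $\La\to T_{u,v}(\La)$, it induces a bijection of words of each length $k$. The computation above therefore gives equality of the two level-$k$ sets in both directions, not merely an inclusion. There is no real obstacle here. The only step needing care is confirming the identities $\epsilon_u u=-u$ and $\epsilon_u^2=1$, which hold only because $u,v\in\{0,-1\}$. This is exactly the hypothesis of the lemma, and it is what guarantees that $T_{u,v}$ maps the digit box onto itself.
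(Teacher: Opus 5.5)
Your proof is correct and is essentially the paper's argument written out level by level. The paper sets $G(z)=T_{u,v}(z)+(mu,nv)=Az+(u,v)$ with $A=\operatorname{diag}(2u+1,2v+1)$, checks the one-step intertwining $S_{(i,j)}\circ G=G\circ S_{T_{u,v}(i,j)}$ using $A^2=I$ and $A(u,v)=-(u,v)$ (your $\epsilon_u^2=1$ and $\epsilon_u u=-u$), and then iterates using $G(0,0)=(u,v)$. You instead verify the resulting $k$-fold identity directly on the closed-form level-$k$ points via $m^k-1=(m-1)\sum_{\ell<k}m^\ell$, and your explicit check that $T_{u,v}(\Lambda)\subseteq\{0,\ldots,m-1\}\times\{0,\ldots,n-1\}$ is a small point the paper leaves implicit.
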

	\begin{proof}
		Put
		\[
		A=
		\begin{pmatrix}
			2u+1&0\\
			0&2v+1
		\end{pmatrix},
		\qquad
		D=
		\begin{pmatrix}
			m&0\\
			0&n
		\end{pmatrix},
		\qquad
		z_0=(u,v).
		\]
		Since $u,v\in\{0,-1\}$, we have
		$A^2=I$ and $Az_0=-z_0$.
		
		Define $G:\Z^2\longrightarrow\Z^2$ by
		\[
		G(z)=Az+z_0=T_{u,v}(z)+Dz_0.
		\]
		For every $(i,j)\in\La$, we have
		\[
		\begin{aligned}
			G\bigl(S_{T_{u,v}(i,j)}(z)\bigr)
			&=A\bigl(Dz+T_{u,v}(i,j)\bigr)+z_0\\
			&=ADz+(i,j)+(D-I)z_0+z_0\\
			&=D(Az+z_0)+(i,j)\\
			&=S_{(i,j)}(G(z)).
		\end{aligned}
		\]
		Thus $
		S_{(i,j)}\circ G=G\circ S_{T_{u,v}(i,j)}.$
		Iterating this identity and using $G(0,0)=z_0$, we obtain
		\[
		\fola(z_0)=G\bigl(\fo_{T_{u,v}(\La)}(0,0)\bigr).
		\]
		Substituting the definition of $G$ gives the desired identity.
	\end{proof}
	\begin{remark}
		In particular, if $(u,v)\in P(\La)$, then by Lemma \ref{fixedpoint},
		\[
		(0,0)=T_{u,v}(-(m-1)u,-(n-1)v)\in T_{u,v}(\La).
		\]
		Thus the forward orbit of every point in $P(\La)$ is an affine image
		of a discrete Bedford--McMullen carpet whose digit set contains
		the origin.
	\end{remark}
	
	\begin{proposition}\label{invariant dimensions}
		Let $K\subseteq\Z^2$ be an invariant set of
		$\{S_{(i,j)}\}_{(i,j)\in\La}$. Then $K$ is non-overlapping and
		\[
		K=\bigcup_{z\in P'}\fola(z),
		\qquad
		\emptyset\ne P'\subseteq P(\La).
		\]
		Moreover, we have
		\[
		\textup{Dim}K=\max_{z\in P'}\,\textup{Dim}\fola(z)
		\]
		for $\textup{Dim}\in\{\dimL,\dimdH,\de(\cdot),\dimM,\dimdP,\dimbe,\dima\} $.
	\end{proposition}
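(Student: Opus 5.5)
The plan is to prove the three assertions in order: non-overlapping, the decomposition, and the dimension formulae. For \emph{non-overlapping}, observe that $S_{(i,j)}(\Z^2)=\{(x,y):x\equiv i \ (\mathrm{mod}\ m),\ y\equiv j\ (\mathrm{mod}\ n)\}$. Hence distinct digits $(i,j)\ne(i',j')$ in $\La$ have disjoint images on all of $\Z^2$, and in particular $S_{(i,j)}(K)\cap S_{(i',j')}(K)=\emptyset$.

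For the \emph{decomposition}, every subset of $\Z^2$ is locally finite and the maps \eqref{map} form an RIFS with expansion ratio $n>1$. Theorem~\ref{discreteinvariant} therefore gives $K=\bigcup_{z\in P'}\fola(z)$ with $P'=K\cap P(\La)$ finite. Since $K\ne\emptyset$, the set $P'$ is nonempty, and Lemma~\ref{fixedpoint} gives $P'\subseteq\{0,-1\}^2$.

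For the dimensions in $\{\dimUM,\dimL,\dimdH,\dimdP,\dimbe,\dima\}$, the formula is immediate from the finite stability in Remark~\ref{finite stability}. For the case $m=n$, Theorem~\ref{main theorem m=n} shows every orbit has all dimensions equal to $\log_m\#\La$. Monotonicity and $K\subseteq\Z^2$ sandwich $\dimLM K$ and $\delta(K)$ between this common value and $\dimUM K=\log_m\#\La$ via \eqref{general lower index comparison}. So it remains to treat $\dimLM$ and $\delta$ when $m>n$.

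For the lower mass dimension with $m>n$, Proposition~\ref{mass} says each orbit has a genuine limit $D_0(u)$. Since
\[
\max_{z\in P'}\#(\fola(z)\cap[-h,h]^2)\le\#(K\cap[-h,h]^2)\le\#P'\max_{z\in P'}\#(\fola(z)\cap[-h,h]^2),
\]
the ratio $\log\#(K\cap[-h,h]^2)/\log h$ converges to $\max_z D_0(u_z)$. Hence $\dimLM K=\dimM K=\max_{z\in P'}\dimM\fola(z)$.

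For the lower entropy index, the lower bound $\delta_t(K)\ge\max_z\delta_t(\fola(z))$ uses Lemma~\ref{annular model localization} applied to a maximizing orbit. That lemma supplies infinitely many annuli on which $P_r(K\cap S_j^{(t)})\ge P_r(\fola(z)\cap S_j^{(t)})\yge\Phi(u,r,t^j)$ holds simultaneously for all admissible $r$. The argument in Proposition~\ref{lower entropy formula} then applies verbatim to $K$.

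The main obstacle is the upper bound for $\delta$. Here a single $\varepsilon$ and a single scale $r_j$ must work for all orbits at once, whereas the optimal choice ($r=1$ versus $r=n^{p(t^j)}$) a priori depends on $u$. The key observation I will use is that, in \eqref{Phi definition}, $\Phi(u,r,h)$ depends on $u$ only through the factor $\eta(u)^{e}$ with nonnegative exponent $e$. Choosing $z^*\in P'$ with $\eta(u^*)$ maximal therefore gives $\Phi(u,r,h)\le\Phi(u^*,r,h)$ for all $r,h$ and all $z\in P'$.

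Combining this with \eqref{general base window count} and subadditivity yields $P_r(K\cap S_j^{(t)})\yle(1+j)\Phi(u^*,r,t^j)$. The upper-bound argument of Proposition~\ref{lower entropy formula} then gives $\delta_t(K)\le\ud(u^*)$. Since $D_0$ and $D_1$ are both nondecreasing in $\eta$ (because $\tau>1$), we have $\ud(u^*)=\max_{z}\ud(u_z)$, which matches the lower bound. Taking $t=2$ completes the proof.
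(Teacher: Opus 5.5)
Your proposal is correct and follows essentially the same route as the paper. Both arguments use the congruence argument for non-overlapping and Theorem~\ref{discreteinvariant} for the decomposition. Both use finite stability, together with the existence of orbit mass dimensions, for the remaining dimensions. For $\de$, both use subadditivity of packing numbers, the fact that $\Phi(u,r,h)$ is nondecreasing in $\eta(u)$, and the fact that the orbit formula for $\de$ is nondecreasing in $\eta(u)$.

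The differences are cosmetic. The paper obtains $\de(K)\ge\de(\fola(z_*))$ directly from monotonicity of $\de$ under inclusion, rather than rerunning Lemma~\ref{annular model localization}. Your appeal to Proposition~\ref{base independence}(iii) for the case $m=n$ is a slightly cleaner justification than the paper's.
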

	
	\begin{proof}
		We first prove the non-overlapping property. Suppose that
		$(i,j),(i',j')\in\La$ and
		\[
		S_{(i,j)}(K)\cap S_{(i',j')}(K)\ne\emptyset.
		\]
		Then there exist $(x,y),(x',y')\in K$ such that
		\[
		(mx+i,ny+j)=(mx'+i',ny'+j').
		\]
		Hence
		\[
		i-i'\in m\Z,\qquad j-j'\in n\Z.
		\]
		Since $0\le i,i'\le m-1, 0\le j,j'\le n-1,$
		it follows that $i=i'$ and $j=j'$. Thus $K$ is non-overlapping.
		
		By Theorem \ref{discreteinvariant}, there exists a
		nonempty finite set $P'\subseteq P(\La)$ such that
		\[
		K=\bigcup_{z\in P'}\fola(z).
		\]
		By Remark \ref{finite stability}, the lower discrete Hausdorff,
		discrete Hausdorff, discrete packing, Beurling, and Assouad dimensions
		are finitely stable. Moreover, every forward orbit has a mass
		dimension by Theorem \ref{main theorem m=n} and Theorem \ref{main theorem m>n}, so the same remark
		also applies to the mass dimension. Consequently,
		\[
		\textup{Dim}K=\max_{z\in P'}\textup{Dim}\fola(z)
		\]
		for $\textup{Dim}\in
		\{\dimL,\dimdH,\dimM,\dimdP,\dimbe,\dima\}.$
		
		It remains to consider the lower entropy index, which is not finitely
		stable in general. Suppose first that $m>n$, and put
		\[
		\eta_{P'}=\max\{\eta(u):(u,v)\in P'\}.
		\]
		Choose $z_*=(u_*,v_*)\in P'$ such that
		$\eta(u_*)=\eta_{P'}$. Since $\fola(z_*)\subseteq K$, monotonicity gives $\de(K)\ge\de(\fola(z_*)).$
		On the other hand, by subadditivity of packing numbers and the uniform
		packing estimate for forward orbits,
		\[
		P_r(K\cap S_k^{(2)})\le\sum_{(u,v)\in P'}P_r\bigl(\fola(u,v)\cap S_k^{(2)}\bigr)\yle_{m,n,\La,P'}
		(1+k)\Phi(u_*,r,2^k).
		\]
		Here we have used that
		$\Phi(u,r,h)$ is nondecreasing in $\eta(u)$. Using the same choices of the intermediate scale $r$ as in the proof
		of the lower entropy index formula for a single forward orbit, we
		obtain
		\[
		\de(K)\le\min\left\{
		\log_m\frac{\#\La}{\eta_{P'}},
		\log_m N_\La\right\}+\log_n\eta_{P'}
		=\de(\fola(z_*)).
		\]
		Hence
		\[
		\de(K)=\de(\fola(z_*))
		=\max_{z\in P'}\de(\fola(z)),
		\]
		where the last equality follows from the fact that the orbit formula
		for $\de(\fola(u,v))$ is nondecreasing in $\eta(u)$.
		
		If $m=n$, all forward orbits have lower entropy index
		$\log_m\#\La$, and the same finite-union packing estimate gives
		\[
		\de(K)=\log_m\#\La
		=\max_{z\in P'}\de(\fola(z)).
		\]
		This completes the proof.
	\end{proof}

	\section{Semigroup growth, orbit dimensions, and the dual attractor}
	\label{semigroup growth section}
	
	We turn to the comparison outlined in the introduction between
	orbit dimensions, semigroup growth, and the dual attractor. For this
	purpose, we recall the precise one-dimensional result of~\cite{MX26}.
	Let $\F=\{f_i\}_{i=1}^p$ be an RIFS on $\R^d$. When the maps $f_i$ are surjective, their inverses form a contractive IFS $\F^{-1}=\{f_i^{-1}\}_{i=1}^p$, called the \emph{dual system}, whose attractor is denoted by $K(\F^{-1})$.
	
	\begin{theorem}\cite[Theorem 1.1]{MX26}\label{thm:intro-main}
		Let $\F=\{f_i(x)=r_i x+b_i\}_{i=1}^p$ with $b_i\in\R,|r_i|>1$ and define $G_{\F}(R):=\#\{g\in G^*(\F):|g'|\le R\}.$ Then the limit
		\begin{equation*}
			d_{\F}\coloneq\lim_{R\to\infty}\frac{\log G_{\F}(R)}{\log R}
		\end{equation*}
		exists and satisfies
		\begin{equation}\label{df exists}
			\dimB K(\F^{-1})\le d_{\F}\le s,
		\end{equation}
		where $s$ is the similarity dimension of $K(\F^{-1})$, determined by
		\[
		\sum_{i=1}^p |r_i|^{-s}=1.
		\]
		
		Furthermore,
		for every $a\in\R$,
		\[
		\dimM\fo_{\F}(a)=\begin{cases}
			d_{\F},&\text{if }\fo_{\F}(a)\text{ is locally finite},\\
			\infty,&\text{otherwise},
		\end{cases}
		\]
		and
		\[
		\dimbe\fo_{\F}(a)=\begin{cases}
			d_{\F}\le1,&\text{if }\fo_{\F}(a)\text{ is uniformly locally finite},\\
			\infty,&\text{otherwise}.
		\end{cases}
		\]
	\end{theorem}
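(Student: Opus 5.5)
The plan is to establish, in order, the existence of $d_{\F}$, the two inequalities in \eqref{df exists}, and finally the orbit statements. Throughout, each $g\in G^*(\F)$ is written as $g(x)=r_gx+b_g$, and I use the basic bound $|b_g|\le C_0|r_g|$ with $C_0=\max_i|b_i|/(\min_i|r_i|-1)$, which follows by induction on word length.

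\emph{Existence of the limit.} I would show that $\phi(x)=\log G_{\F}(e^x)$ is subadditive up to a constant. Every $g$ with $|g'|\le R_1R_2$ is a composition $f_{i_1}\circ\cdots\circ f_{i_k}$. Cut the word at the first prefix whose derivative exceeds $R_1/\rho$, where $\rho=\max_i|r_i|$. This writes $g=h\circ k$ with $|h'|\le R_1$ and $|k'|\le \rho R_2$.
\begin{itemize}
\item Distinct maps $g$ can come from the same pair $(h,k)$ only trivially, so $G_{\F}(R_1R_2)\le G_{\F}(R_1)\,G_{\F}(\rho R_2)$.
\item Since $G_{\F}$ is monotone, this gives $\phi(x+y)\le\phi(x)+\phi(y)+c$.
\item Fekete's lemma, applied to $\phi+c$, then yields existence of $\lim\phi(x)/x$.
\end{itemize}

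\emph{Bounds on $d_{\F}$.} For $d_{\F}\le s$, I would count words rather than distinct maps. The number of words with $\prod|r_{i_\ell}|\le R$ is $\lesssim R^s$ by the standard renewal/stopping argument, using $\sum|r_i|^{-s}=1$. For $\dimB K(\F^{-1})\le d_{\F}$, I would take the stopping family of words with $|g'|\in(R/\rho,R]$. The sets $g^{-1}(K)$ over this family cover $K$, and each has diameter $\lesssim 1/R$. The number of \emph{distinct} such sets is at most the number of distinct maps $g$, so $N_{1/R}(K)\lesssim G_{\F}(R)$.

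\emph{Orbit dimensions.} If the orbit is not locally finite, some bounded window contains infinitely many points, so both dimensions are $\infty$. In the locally finite case I would compare $\#(\fo_{\F}(a)\cap[-h,h])$ with $G_{\F}(h)$.
\begin{itemize}
\item \emph{Lower bound.} By the bound on $b_g$, every $g$ with $|g'|\le h/C$ satisfies $g(a)\in[-h,h]$ once $C$ depends on $|a|$ and $C_0$. It remains to show that the map $g\mapsto g(a)$ has fibres of subpolynomial size. If $g(a)=g'(a)$ with $g\ne g'$, then $g,g'$ have different linear parts or differ elsewhere. I would use local finiteness to show that only boundedly many derivatives $r$ (up to a constant factor) can occur in one fibre at scale $h$. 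Otherwise the differences of two maps in the semigroup, applied to orbit points, produce infinitely many orbit points in a fixed bounded set.
\item \emph{Upper bound.} A point $x=g(a)\in[-h,h]$ with $|g'|$ much larger than $h$ forces $a$ to be near the repelling fixed point of $g$. I would first reduce to words whose derivative is $\le h^{1+o(1)}$ by peeling off such long prefixes. Local finiteness bounds how many distinct points these long words contribute.
\end{itemize}
Together these give $\dimM\fo_{\F}(a)=d_{\F}$.

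For the Beurling dimension, uniform local finiteness means every unit window contains at most $M$ points. Hence every window of side $h$ contains at most $Mh$ points, so the Beurling dimension is at most $1$. The same comparison as above, applied to translated windows, gives $d_{\F}$, using $\dimbe\ge\dimUM$ for the lower bound. If the orbit is not uniformly locally finite, the windows of fixed side contain unboundedly many points, which forces $\infty$.

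The main obstacle is the fibre control in the orbit step: bounding, via (uniform) local finiteness alone, how many distinct semigroup elements can send $a$ to the same point or to points in a short interval. I would first try to show that coincidences $g(a)=g'(a)$ propagate along the semigroup and would otherwise produce an accumulation of orbit points in a bounded set.
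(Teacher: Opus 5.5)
This theorem is not proved in the present paper. It is quoted from \cite{MX26} and used only for comparison in the last section, so your proposal has to be judged on its own. Its architecture is sound: Fekete's lemma for $d_{\F}$, cut sets of words for $d_{\F}\le s$ and for $\dimB K(\F^{-1})\le d_{\F}$, and a comparison of window counts with $G_{\F}$ for the orbit. However, two steps do not work as written.

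\textbf{The subadditivity step.} Your cut gives $G_{\F}(R_1R_2)\le G_{\F}(R_1)\bigl(1+G_{\F}(\rho R_2)\bigr)$. To reach $\phi(x+y)\le\phi(x)+\phi(y)+c$ you need $G_{\F}(\rho R)\lesssim G_{\F}(R)$, but monotonicity gives only the reverse inequality. You can repair this by stripping a bounded number of outer generators. Let $\lambda=\min_i|r_i|$ and choose $N$ with $\lambda^N\ge\rho$. Every $g$ with $|g'|\le\rho R$ is then either a composition of at most $N$ generators, or $u\circ\tilde g$ with $u$ a composition of exactly $N$ generators and $|\tilde g'|\le R$. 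Hence $G_{\F}(\rho R)\le p^N\bigl(G_{\F}(R)+N\bigr)$.

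\textbf{The fibre control.} This is the more serious issue, and it is the step you yourself flagged. Your proposed mechanism is that local finiteness allows only boundedly many derivative scales in one fibre of $g\mapsto g(a)$. This is false. Take $f_1(x)=2x$, $f_2(x)=2x+1$ and $a=0$: the orbit is $\{0,1,2,\ldots\}$, which is locally finite. Yet $f_1^k(0)=0$ for every $k$, so the fibre over $0$ among maps with $|g'|\le h$ contains $\lfloor\log_2 h\rfloor$ maps whose derivatives $2^k$ are spread over all scales up to $h$.

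The missing idea is elementary and needs no finiteness hypothesis.
\begin{itemize}
\item Two affine maps with the same linear part that agree at $a$ are equal.
\item The derivative $g'=\prod_i r_i^{n_i}$ depends only on the letter counts $(n_1,\ldots,n_p)$, and these satisfy $\sum_i n_i\le\log_\lambda R$ whenever $|g'|\le R$.
\end{itemize}
So each fibre of $g\mapsto g(a)$ on $\{g:|g'|\le R\}$ has at most $(1+\log_\lambda R)^p$ elements. Consequently $\#(\fo_{\F}(a)\cap[-h,h])\ge(1+\log_\lambda h)^{-p}G_{\F}(h/C)$ for every $a\in\R$. The lower bound therefore holds unconditionally, and local finiteness is needed only for the upper bound.

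\textbf{The upper bound.} Your sketch is on the right track, but the reduction is to derivative $\le h$, not $h^{1+o(1)}$. Suppose $g(a)\in[-h,h]$ and $|g'|>h/\rho$. Write $g=g_1\circ g_2$, where $g_1$ is the shortest outer subword with $|g_1'|>h/\rho$. Then $|g_1'|\le h$, and $g_2$ is either the identity or lies in $G^*(\F)$. Since $g_2(a)=g_1^{-1}(g(a))$ and $|b_{g_1}|\le C_0|g_1'|$, we get $|g_2(a)|\le\rho+C_0$. So $g_2(a)$ is either $a$ or one of the $M<\infty$ orbit points in $[-(\rho+C_0),\rho+C_0]$. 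This gives $\#(\fo_{\F}(a)\cap[-h,h])\le(M+2)G_{\F}(h)$, and this is the only place local finiteness enters.

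For a translated window $x_0+[-h,h]$, the same cut places $g_2(a)$ in an interval of length less than $2\rho$ whose position depends on $g_1$ and $x_0$. This is exactly where uniform local finiteness is needed for $\dimbe\fo_{\F}(a)\le d_{\F}$. Your phrase ``the same comparison applied to translated windows'' leaves this step implicit.
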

	For the planar systems considered here, we shall show that the
	mass and Beurling dimensions lie between the semigroup-growth exponent
	and the affinity dimension. Moreover, the ordering between the dual
	box dimension and semigroup growth in~\eqref{df exists} is reversed
	when $m>n$. These differences reflect the unequal coordinate
	expansion rates.

	\subsection{Planar semigroup growth and affinity dimension}
	
	Write $D=\operatorname{diag}(m,n)$ and regard each $S_{(i,j)}$ as an
	affine map of $\R^2$. Let $G^*(\La)$ be the semigroup generated by
	these maps, with elements counted as distinct maps rather than as
	words. For $g\in G^*(\La)$, let $J_g$ be its linear part, and put
	\[
	G_\La(R)=\#\{g\in G^*(\La):\|J_g\|\le R\},
	\]
	where $\|\cdot\|$ is the Euclidean operator norm.
	
	\begin{proposition}\label{semigroup growth exponent}
		The semigroup $G^*(\La)$ is free, and
		\[
		d_\La:=\lim_{R\to\infty}\frac{\log G_\La(R)}{\log R}
		=\log_m\#\La.
		\]
	\end{proposition}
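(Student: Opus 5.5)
The plan is to compute every element of $G^*(\La)$ explicitly and count. A word $(\bi,\bj)=(i_0\ldots i_{k-1},j_0\ldots j_{k-1})\in\La^k$ gives the affine map
\[
S_{(\bi,\bj)}(x,y)=\Bigl(m^kx+\sum_{\ell=0}^{k-1}i_\ell m^\ell,\;n^ky+\sum_{\ell=0}^{k-1}j_\ell n^\ell\Bigr),
\]
whose linear part is $J=D^k=\operatorname{diag}(m^k,n^k)$.

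First I will show that distinct words give distinct maps, so that $G^*(\La)$ is free. Suppose two words $w\in\La^k$ and $w'\in\La^{k'}$ define the same map. Comparing linear parts gives $m^k=m^{k'}$, hence $k=k'$. Comparing translation parts then gives
\[
\sum_\ell i_\ell m^\ell=\sum_\ell i'_\ell m^\ell,\qquad \sum_\ell j_\ell n^\ell=\sum_\ell j'_\ell n^\ell .
\]
Uniqueness of finite base-$m$ and base-$n$ expansions with digits in $\{0,\ldots,m-1\}$ and $\{0,\ldots,n-1\}$ forces $i_\ell=i'_\ell$ and $j_\ell=j'_\ell$ for every $\ell$. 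This is the same injectivity already used for $\Psi_k$, and the argument in the non-overlapping part of Proposition~\ref{invariant dimensions} is the one-step version of it. Consequently the elements of $G^*(\La)$ correspond bijectively to nonempty finite words over $\La$, and freeness follows.

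Next I count. Since $m\ge n$, the Euclidean operator norm of $D^k$ is $m^k$. Hence
\[
G_\La(R)=\#\{\text{words of length }k\ge1: m^k\le R\}=\sum_{k=1}^{p(R)}(\#\La)^k ,
\]
with $p(R)=\lfloor\log_m R\rfloor$.

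Finally I take the limit. If $\#\La\ge2$, this geometric sum is comparable to $(\#\La)^{p(R)}$, up to a factor of at most $2$. Since $p(R)=\log_mR+O(1)$, we get $\log G_\La(R)/\log R\to\log_m\#\La$. If $\#\La=1$, then $G_\La(R)=p(R)$ for large $R$, so the ratio is $O(\log\log R/\log R)\to0=\log_m 1$.

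There is no genuine obstacle. The only points needing care are that the count runs over distinct maps rather than words, which freeness settles, and the degenerate case $\#\La=1$, which is handled separately above.
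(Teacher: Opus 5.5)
Your proposal is correct and follows essentially the same route as the paper: comparing linear parts to separate lengths, then using uniqueness of finite base-$m$ and base-$n$ expansions to get freeness. From there both arguments use $\|D^k\|=m^k$ to write $G_\La(R)=\sum_{k=1}^{\lfloor\log_m R\rfloor}(\#\La)^k$, and treat $\#\La\ge2$ and $\#\La=1$ separately.
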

	
	\begin{proof}
		For $(\bi,\bj)=(i_0\ldots i_{k-1},j_0\ldots j_{k-1})\in\La^k$, the
		corresponding map is
		\[
		S_{(\bi,\bj)}(z)
		=D^kz+\left(\sum_{\ell=0}^{k-1}i_\ell m^\ell,
		\sum_{\ell=0}^{k-1}j_\ell n^\ell\right).
		\]
		If two words of length $k$ define the same map, their translation
		vectors coincide. Uniqueness of finite base-$m$ and base-$n$
		expansions then gives equality of every digit. Words of different
		lengths have different linear parts and therefore define different
		maps. Thus there are exactly $(\#\La)^k$ elements of length $k$.
		
		Since $\|D^k\|=m^k$, for $R\ge m$ we obtain
		\[
		G_\La(R)=\sum_{k=1}^{\lfloor\log_m R\rfloor}(\#\La)^k.
		\]
		If $\#\La\ge2$, the logarithm of this sum equals
		$\lfloor\log_m R\rfloor\log\#\La+O_\La(1)$.
		If $\#\La=1$, the sum is $\lfloor\log_m R\rfloor$, whose
		logarithm is $o(\log R)$. Both cases give the asserted limit.
	\end{proof}
	
	The dual maps are
	$S_{(i,j)}^{-1}(x,y)=((x-i)/m,(y-j)/n)$.
	Iterating them shows that their attractor satisfies
	\[
	K(\La^{-1})\coloneq \left\{\left(-\sum_{\ell=1}^\infty\frac{i_\ell}{m^\ell},
	-\sum_{\ell=1}^\infty\frac{j_\ell}{n^\ell}\right):
	(i_\ell,j_\ell)\in\La\right\}.
	\]
	It is therefore an isometric image of the classical
	Bedford--McMullen carpet $F_\La$.
	For its row structure, put
	\[
	M_j=\#\{i:(i,j)\in\La\},\qquad
	M_\La=\#\{j:M_j>0\},\qquad
	M_{\max}=\max_{0\le j<n}M_j.
	\]
	The classical dimension formula \cite{McMullen1984} gives
	\begin{equation}\label{dual box dimension}
		\dimB K(\La^{-1})
		=\log_m\left(\frac{\#\La}{M_\La}\right)+\log_n M_\La.
	\end{equation}
	When $m=n$, the right-hand side reduces to $\log_m\#\La$.
	
	The affinity dimension $s_{\mathrm{aff}}$ of the dual system is
	the planar counterpart of the similarity dimension in
	\eqref{df exists}. To compute it, note that the singular values of
	$D^{-1}$ are $n^{-1}$ and $m^{-1}$. Its singular value function is
	\[
	\varphi^s(D^{-1})=
	\begin{cases}
		n^{-s},&0\le s\le1,\\
		n^{-1}m^{1-s},&1\le s\le2.
	\end{cases}
	\]
	All words of length $k$ have linear part $D^{-k}$, and
	$\varphi^s(D^{-k})=\varphi^s(D^{-1})^k$. Hence the singular value
	pressure equals
	\[
	\lim_{k\to\infty}\frac1k
	\log\sum_{\omega\in\La^k}\varphi^s(D^{-k})
	=\log\#\La+\log\varphi^s(D^{-1}).
	\]
	Since $1\le\#\La\le mn$, this pressure has a unique zero in
	$[0,2]$, which is $s_{\mathrm{aff}}$. Thus
	\begin{equation}\label{dual affinity dimension}
		s_{\mathrm{aff}}=
		\begin{cases}
			\log_n\#\La,&\#\La\le n,\\[1mm]
			1+\log_m\Big(\dfrac{\#\La}{n}\Big),&\#\La\ge n.
		\end{cases}
	\end{equation}
	The two expressions agree when $\#\La=n$.
	In particular, for fixed $m,n$, both $d_\La$ and
	$s_{\mathrm{aff}}$ depend only on $\#\La$, not on the distribution
	of the digits among rows and columns.
	
	\subsection{Dimension bounds and the effect of anisotropy}

	\begin{proposition}\label{full semigroup dimension chain}
		For every $z=(u,v)\in\Z^2$,
		\begin{equation}\label{semigroup dual dimension chain}
			d_\La\le\dimM\fola(z)\le\dimbe\fola(z)
			\le\dimB K(\La^{-1})\le s_{\mathrm{aff}}.
		\end{equation}
		If $m=n$, all these quantities equal $\log_m\#\La$.
	\end{proposition}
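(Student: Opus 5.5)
The plan is to verify the chain term by term from the explicit formulae already available. When $m=n$, Proposition~\ref{semigroup growth exponent}, Theorem~\ref{main theorem m=n}, \eqref{dual box dimension} and \eqref{dual affinity dimension} with $m=n$ all give $\log_m\#\La$; in particular $s_{\mathrm{aff}}=\log_m\#\La$ in both branches, since $1+\log_m(\#\La/m)=\log_m\#\La$. So the substance is the case $m>n$, where we set $\tau=\log m/\log n>1$ and write every quantity in the form $\log_m(\#\La/x)+\log_n x=\log_m\#\La+(\tau-1)\log_m x$. Note that $f(x)=\log_m\#\La+(\tau-1)\log_m x$ is nondecreasing in $x\ge1$.

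With this notation, Proposition~\ref{semigroup growth exponent} gives $d_\La=f(1)$. Proposition~\ref{mass} gives $\dimM\fola(z)=f(\eta(u))$, Proposition~\ref{Beurling estimate} gives $\dimbe\fola(z)=f(\Nm)$, and \eqref{dual box dimension} gives $\dimB K(\La^{-1})=f(M_\La)$. The first three inequalities therefore reduce to the integer inequalities $1\le\eta(u)\le\Nm\le M_\La$. The first two hold by the definition of $\eta$, since $\eta(u)$ is either $1$ or $\max\{1,N_i\}$ for an endpoint column $i$. For the third, a column of maximal size $\Nm$ contains $\Nm$ distinct $j$'s, each lying in a distinct nonempty row, so $M_\La\ge\Nm$.

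The last inequality, $f(M_\La)\le s_{\mathrm{aff}}$, is a classical fact (box dimension of a self-affine carpet is bounded by affinity dimension). I would nonetheless verify it directly from \eqref{dual affinity dimension} by splitting into two cases.

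If $\#\La\ge n$, then $s_{\mathrm{aff}}=1+\log_m(\#\La/n)$, and the inequality becomes $\log_n M_\La-\log_m M_\La\le 1-\log_m n$. This holds because $x\mapsto(\tau-1)\log_m x$ is increasing and $M_\La\le n$: at $x=n$ the left side is $1-\log_m n$.

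If $\#\La<n$, then $s_{\mathrm{aff}}=\log_n\#\La$, and we need $f(M_\La)\le\log_n\#\La=\tau\log_m\#\La$, that is $(\tau-1)\log_m M_\La\le(\tau-1)\log_m\#\La$. This follows from $M_\La\le\#\La$.

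The only mildly delicate point is keeping track of which case of $s_{\mathrm{aff}}$ applies; the rest is bookkeeping.
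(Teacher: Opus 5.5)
Your proposal is correct and follows the paper's proof: the paper writes each term as $\chi(t)=\log_m(\#\La/t)+\log_n t$, evaluated at the points of the chain $1\le\eta(u)\le\Nm\le M_\La\le\min\{\#\La,n\}$, and then applies monotonicity of $\chi$. Your two-case verification of the last inequality is just the unpacked form of the paper's observation that $\chi(\min\{\#\La,n\})=s_{\mathrm{aff}}$.
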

	
	\begin{proof}
		For $1\le t\le\min\{\#\La,n\}$, put
		\[
		\chi(t)=\log_m\Big(\frac{\#\La}{t}\Big)+\log_n t
		=\log_m\#\La+
		\Big(\frac1{\log n}-\frac1{\log m}\Big)\log t.
		\]
		This function is nondecreasing.
		Every occupied column contains at most one digit in each occupied
		row. Together with the definition of $\eta(u)$, this gives
		\[
		1\le\eta(u)\le\Nm\le M_\La\le\min\{\#\La,n\}.
		\]
		If $m>n$, Proposition \ref{semigroup growth exponent},
		Theorem~\ref{main theorem m>n}, and \eqref{dual box dimension}
		identify the first four quantities in
		\eqref{semigroup dual dimension chain} as
		$\chi(1)$, $\chi(\eta(u))$, $\chi(\Nm)$, and $\chi(M_\La)$,
		respectively. Moreover, \eqref{dual affinity dimension} gives
		$\chi(\min\{\#\La,n\})=s_{\mathrm{aff}}$.
		Monotonicity of $\chi$ proves the chain.
		
		If $m=n$, the function $\chi$ is constant, with value
		$\log_m\#\La$. Theorem \ref{main theorem m=n},
		Proposition \ref{semigroup growth exponent}, and
		\eqref{dual box dimension}--\eqref{dual affinity dimension}
		give all the stated equalities.
	\end{proof}
	
	For $m>n$, the departures from the one-dimensional identities can
	be read directly from the formulae:
	\begin{equation}\label{anisotropic dimension corrections}
		\begin{aligned}
			\dimM\fola(z)-d_\La
			&=\left(\frac1{\log n}-\frac1{\log m}\right)\log\eta(u),\\
			\dimbe\fola(z)-d_\La
			&=\left(\frac1{\log n}-\frac1{\log m}\right)\log\Nm,\\
			\dimB K(\La^{-1})-d_\La
			&=\left(\frac1{\log n}-\frac1{\log m}\right)\log M_\La.
		\end{aligned}
	\end{equation}
	Thus the mass and Beurling dimensions
	equal $d_\La$ precisely when $\eta(u)=1$ and $\Nm=1$,
	respectively. More importantly, the one-dimensional inequality
	$\dimB K(\F^{-1})\le d_{\F}$ is replaced here by
	$\dimB K(\La^{-1})\ge d_\La$, with strict inequality whenever
	$M_\La>1$. The special cases of equality are retained in these
	statements; the distinction is that the one-dimensional identities
	no longer hold in general.
	
	The following example shows that all four inequalities in
	\eqref{semigroup dual dimension chain} can be strict.
	
	\begin{example}\label{strict anisotropic dimension chain}
		Take $m=6$, $n=5$, $z=(0,0)$, and
		\[
		\La=(\{0\}\times\{0,1\})\cup(\{1\}\times\{1,2,3\}).
		\]
		Here $\#\La=5$, $\eta(0)=2$, $\Nm=3$,
		and $M_\La=4$. Thus $d_\La=\log_6 5$ and $s_{\mathrm{aff}}=1$,
		while Theorem~\ref{main theorem m>n} and \eqref{dual box dimension} give
		\[
		\begin{aligned}
			\dimM\fola(z)&=\log_6\frac52+\log_5 2,\\
			\dimbe\fola(z)&=\log_6\frac53+\log_5 3,\\
			\dimB K(\La^{-1})&=\log_6\frac54+\log_5 4.
		\end{aligned}
		\]
		Since $t\mapsto\log_6(5/t)+\log_5 t$ is strictly increasing, we have
		\[
		\log_6 5=d_\La<\dimM\fola(z)<\dimbe\fola(z)
		<\dimB K(\La^{-1})<s_{\mathrm{aff}}=1.
		\]
		Figure~\ref{fig:orbit-m6-n5} shows the orbit in the window $[0,300)^2$.
		\begin{figure}[htbp]
			\centering
			\includegraphics[width=0.66\linewidth]{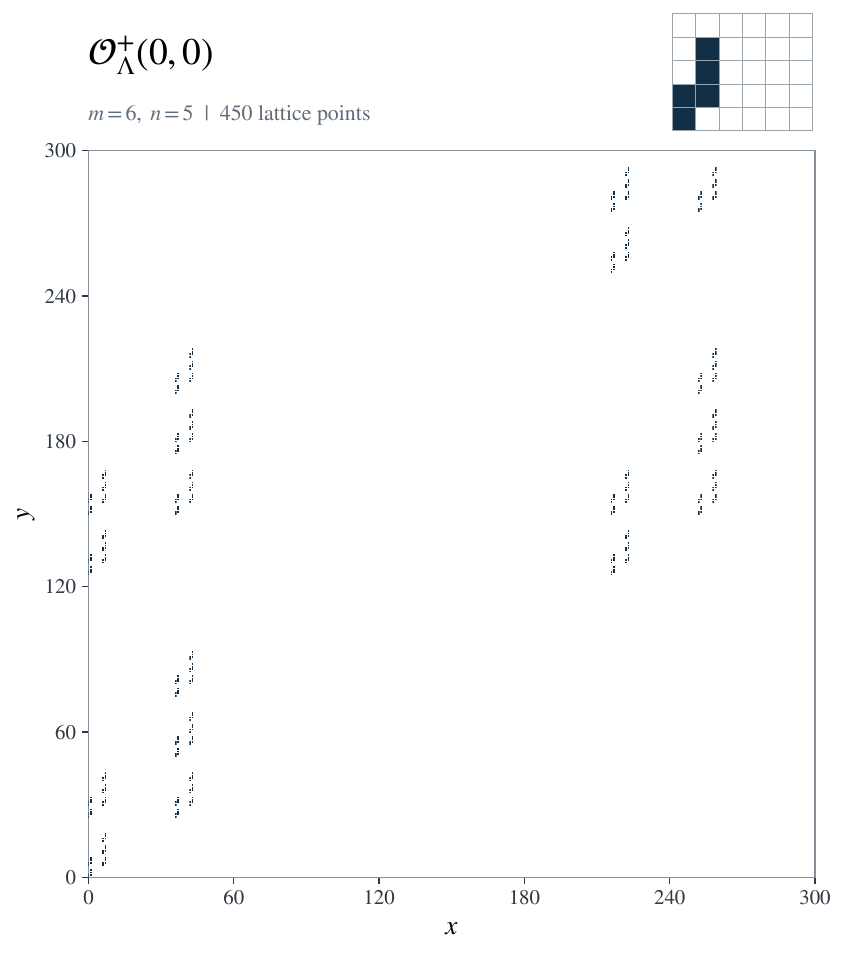}
			\caption{The orbit $\fola(0,0)$ for the digit set in
				Example~\ref{strict anisotropic dimension chain}.
				Both axes use the same scale. }
			\label{fig:orbit-m6-n5}
		\end{figure}
	\end{example}
	
	These discrepancies stem from the unequal expansion rates in the two
	coordinate directions, rather than from identifications among semigroup
	elements. At the large scale $m^k$, the finite
	model has $k$ mixed levels but $q_k=\lfloor \tau k\rfloor$
	vertical levels. The additional $q_k-k$ levels contribute the factors
	$\eta(u)^{q_k-k}$ in centered-window counts and
	$\Nm^{q_k-k}$ in maximal moving-window counts. Taking logarithms
	and dividing by $k\log m$ gives exactly the first two correction
	terms in \eqref{anisotropic dimension corrections}. The dual system has the opposite scale ordering: the horizontal
	direction contracts faster. At scale $n^{-k}$, the vertical
	coordinate requires $k$ digits, while the horizontal coordinate
	requires only $\lceil k/\tau\rceil$ digits. After the initial paired digits, it is the
	choice of rows that remains visible. This accounts for the factor
	$M_\La$ in \eqref{dual box dimension}. In contrast, the norm
	$\|D^k\|=m^k$ used in $d_\La$ records only the faster expansion
	rate, and the affinity pressure records the two contraction rates
	and the total number of maps but not their row or column placement.
	Neither scalar quantity retains the directional branching data
	needed for the orbit dimensions. When $m=n$, the two coordinate
	scales coincide and all the correction terms vanish.
	
	The column pressure from Section~\ref{section hausdorff} provides
	one way to retain these data. Namely, put
	$\mathcal P(\lambda)=\log\sum_{i\in\mI}N_i^\lambda$ for
	$\lambda\ge0$. This is $P(T,\lambda g)$ for
	$g(\bi)=\log N_{i_0}$, since its length-$k$ partition sum is
	$(\sum_{i\in\mI}N_i^\lambda)^k$. It satisfies
	\[
	\mathcal P(0)=\log N_\La,\qquad
	\mathcal P(1)=\log\#\La,\qquad
	\lim_{\lambda\to\infty}\frac{\mathcal P(\lambda)}{\lambda}
	=\log\Nm.
	\]
	Thus, together with $m,n$ and $\eta(u)$, the column pressure
	determines all the orbit dimensions in Theorem~\ref{main theorem m>n}.
	
	\subsection{Comparison with classical Bedford--McMullen carpets}
	
	For $m>n$, Table~\ref{tab:inverse-forward-dimensions} lists the
	explicit formulae for the classical dimensions of $K(\La^{-1})$
	and the corresponding discrete dimensions of $\fola(z)$.
	The classical Hausdorff and box formulae are due to Bedford and
	McMullen; see \cite{McMullen1984}. The Assouad formula is proved in
	\cite[Theorem~1.1]{Mackay2011}.

	The table makes the directional distinction explicit: the classical
	formulae involve the row counts $M_j$, whereas the discrete formulae
	involve the column counts $N_i$ and the endpoint parameter $\eta(u)$.
	In particular, the dimensions of the dual attractor do not depend on
	the starting point, while several forward-orbit dimensions do.
	
	\begin{table}[htbp]
		\centering
		\small
		\renewcommand{\arraystretch}{1.5}
		\begin{tabularx}{\textwidth}{@{}
				>{\hsize=0.9\hsize\centering\arraybackslash}X|
				>{\hsize=1.1\hsize\centering\arraybackslash}X@{}}
			\hline
			Classical carpet $K(\La^{-1})$ & Discrete orbit $\fola(z)$\\
			\hline
			\paddedmath{\begin{gathered}
					\dimH K(\La^{-1})
					=\log_n\bigg(\sum_{j:M_j>0}M_j^{1/\tau}\bigg)
			\end{gathered}}
			& \paddedmath{\begin{gathered}
					\dimdH\fola(z)\\
					=\min_{0\le\lambda\le1}\log_m\Big(
					\sum_{i:N_i>0}\Big(\frac{N_i}{\eta(u)}\Big)^\lambda\Big)+\log_n\eta(u)
			\end{gathered}}\\
			\hline
			\paddedmath{\begin{gathered}
					\dimB K(\La^{-1})
					=\log_m\Big(\frac{\#\La}{M_\La}\Big)+\log_n M_\La
			\end{gathered}}
			& \paddedmath{\begin{gathered}
					\dimM\fola(z)
					=\log_m\Big(\frac{\#\La}{\eta(u)}\Big)+\log_n\eta(u)
			\end{gathered}}\\
			\hline
			\paddedmath{\begin{gathered}
					\dimP K(\La^{-1})
					=\log_m\Big(\frac{\#\La}{M_\La}\Big)+\log_n M_\La
			\end{gathered}}
			& \paddedmath{\begin{gathered}
					\dimdP\fola(z)\\
					=\max\Big\{\log_m\Big(\frac{\#\La}{\eta(u)}\Big),
					\log_m N_\La\Big\}+\log_n\eta(u)
			\end{gathered}}\\
			\hline
			\paddedmath{\begin{gathered}
					\dima K(\La^{-1})=\log_n M_\La+\log_m M_{\max}
			\end{gathered}}
			& \paddedmath{\begin{gathered}
					\dima\fola(z)=\log_m N_\La+\log_n\Nm
			\end{gathered}}\\
			\hline
		\end{tabularx}
		\caption{Classical and discrete dimension formulae when $m>n$,
			with $\tau=\log m/\log n$.}
		\label{tab:inverse-forward-dimensions}
	\end{table}
	
	\begin{remark}
		For the compact dual attractor, the classical Assouad dimension
		describes covering behaviour at scales $0<r<R\le\operatorname{diam}
		K(\La^{-1})$ when the attractor is not a singleton; a singleton
		has dimension zero. The discrete Assouad dimension of the forward
		orbit uses $1\le r<R$ and arbitrarily large $R$. The two formulae
		therefore concern small-scale and large-scale geometry, respectively.
	\end{remark}

\section*{Use of artificial intelligence}

The authors formulated the research framework and determined the
structure of the paper. Within this framework, the use of ChatGPT
(OpenAI) for dimension calculations was limited to the discrete
Hausdorff dimension and the lower entropy index; all other dimension
results were derived independently by the authors. ChatGPT also
assisted in constructing examples, generating scatter plots of the
orbits, and checking the mathematical arguments. The authors have
reviewed and verified all AI-generated mathematical reasoning included
in the paper and take full responsibility for its mathematical
content, references, and final presentation.

\end{document}